\newtheorem{theorem}{Theorem}[section]
\newtheorem{lemma}[theorem]{Lemma}
\newtheorem{proposition}[theorem]{Proposition}
\theoremstyle{definition}
\newtheorem*{theorem*}{Theorem}
\theoremstyle{remark}
\newtheorem{remark}[theorem]{Remark}
\numberwithin{equation}{section}
\newcommand{\abs}[1]{\left\lvert#1\right\rvert}
\newcommand{\sabs}[1]{\lvert#1\rvert}
\newcommand{\norm}[1]{\left\lVert#1\right\rVert}
\newcommand{\snorm}[1]{\lVert#1\rVert}
\newcommand{\ip}[1]{\left\langle #1 \right\rangle}
\newcommand{\R}{\mathbb{R}}
\newcommand{\Z}{\mathbb{Z}}
\newcommand{\cir}{\mathbb{S}^1}
\DeclareMathOperator{\D}{\mathcal{D}}
\DeclareMathOperator{\diff}{\operatorname{Diff}}
\DeclareMathOperator{\Imm}{Imm}
\DeclareMathOperator{\dist}{dist}
\DeclareMathOperator{\diam}{diam}
\DeclareMathOperator{\Diff}{Diff}
\newcommand{\red}[1]{{\color{red} #1}}
\newcommand{\blue}[1]{{\color{blue} #1}}
\newcommand{\vmark}{\ding{51}}%
\newcommand{\xmark}{\ding{55}}%
\begin{document}

\title[Fractional Sobolev metrics on spaces of  curves]{
Completeness and geodesic distance properties for fractional Sobolev metrics on spaces of immersed curves}

\author{Martin Bauer}
\address{Department of Mathematics, Florida State University, Tallahassee, United States.}
\email{bauer@math.fsu.edu}
\author{Patrick Heslin}
\address{Department of Mathematics and Statistics, National University of Ireland, Maynooth, Kildare, Ireland.}
\email{patrick.heslin@mu.ie}
\author{Cy Maor}
\address{Einstein Institute of Mathematics, Hebrew University of Jerusalem, Israel.}
\email{cy.maor@mail.huji.ac.il}

\subjclass[2020]{58B20, 58D10, 35G55, 35A01}

\date{December 2023}

\keywords{infinite-dimensional Riemannian geometry, immersions, geodesic distance, completeness, global well-posedness, fractional Sobolev space}

\begin{abstract}
We investigate the geometry of the space of immersed closed curves equipped with reparametrization-invariant Riemannian metrics;
the metrics we consider are Sobolev metrics of possible fractional order $q\in [0,\infty)$. We establish the critical Sobolev index on the metric for several key geometric properties. Our first main result shows that the Riemannian metric induces a metric space structure if and only if $q>1/2$.
Our second main result shows that the metric is geodesically-complete (i.e., the geodesic equation is globally well-posed) if $q>3/2$, whereas if $q<3/2$ then finite-time blowup may occur.
The geodesic-completeness for $q>3/2$ is obtained by proving metric-completeness of the space of $H^q$-immersed curves with the distance induced by the Riemannian metric.

\end{abstract}

\maketitle

\setcounter{tocdepth}{1}
\tableofcontents

\section{Introduction}
\subsection*{Background and Motivation.}
Reparametrization-invariant Sobolev metrics on the space of immersed curves have been of central interest in recent years:
from an application point of view, they take a central role in the area of mathematical shape analysis, see e.g.~\cite{srivastava2016functional,younes2010shapes,bauer2014overview} and the references therein. These metrics also arise in higher-order gradient flows for various functionals \cite{schrader2023h,okabe2021convergence}.
From a theoretical point of view, they are the natural generalization of right-invariant Sobolev metrics on the diffeomorphism group of $\cir$; whose geodesic equations reduce to many important PDEs from hydrodynamics, including the Burgers, Camassa-Holm and Hunter-Saxton equations. For a comprehensive list of examples, see the book of Arnold and Khesin \cite{arnold2021topological}.

More recently, there has been an interest to extend the study of reparamerization-invariant Sobolev metrics to those of fractional order. 
This can be motivated, e.g., by applications in shape optimization in geometric knot theory~\cite{reiter2021sobolev,knappmann2023speed}; there, a main tool is using gradient-based approach for $H^{3/2}$ and $H^{3/2+\epsilon}$-type metrics, an exponent we show in this work to be critical for the completeness of the metric.
Fractional order metrics have already been investigated in the context of the aforementioned geometric approach to hydrodynamics. 
Well-known PDEs, including the Surface Quasi-Geostrophic equations \cite{washabaugh2016sqg} and the modified Constantin-Lax-Majda equation~\cite{wunsch2010geodesic}, arise as reduced geodesic equations for right-invariant Sobolev metrics of fractional order on diffeomorphism groups.

The geometry of infinite dimensional Riemannian manifolds is subtle and susceptible to pathologies and many elementary facts from finite dimensional geometry do not necessarily carry over. Indeed a smooth exponential map may not exist \cite{constantin2002geometric}, the geodesic distance can be degenerate or even vanish identically \cite{eliashberg1993bi,michor2005vanishing,magnani2020remark,jerrard2019geodesic}, and almost all statements of the classical Hopf-Rinow theorem fail to hold \cite{grossman1965, mcalpin1965, atkin1975}.

In the context of reparametrization-invariant Sobolev metrics on spaces of immersions, generally speaking, the higher the order of the Sobolev metric, the better behaved the Riemannian structure. 
The goal of this current paper is to identify the exact thresholds in which transitions between ``bad'' and ``good'' behaviors occur, as described below, and are summarized in Table~\ref{tab:Imm_vs_Diff}. 

\subsection*{Main results.}
We will now describe the main results of the present article. 
Our central object of interest is the space $\Imm(\cir,\R^d)$ of smooth immersions of closed curves in $\R^d$, endowed with a reparametrization-invariant Sobolev metric of order $q\in [0,\infty)$, denoted by $G^q$.
Each result is restated later in greater detail and generality, and includes also immersions of Sobolev (rather than smooth) regularity. 
Exact definitions of the spaces and the metrics considered here are given in Section~\ref{sec:Riem_geo}.

Our first main result concerns the induced geodesic distance:
Since the space of immersed curves $\Imm(\cir,\R^d)$ is an infinite dimensional manifold, the induced geodesic distance of a Riemannian metric is a-priori only guaranteed to be a semi-metric, i.e., distinct elements can be of zero distance~\cite{michor2005vanishing,eliashberg1993bi}.
The following result characterizes precisely for which metrics this occurs:
\begin{theorem*}[Geodesic distance]
    The geodesic distance of the reparametrization-invariant Sobolev metric of order $q\in [0,\infty)$, on the space of smooth immersed closed curves $\Imm(\cir,\R^d)$, induced a metric space structure if and only if $q>1/2$.
\end{theorem*}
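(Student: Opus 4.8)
The common engine of both implications is that the Sobolev space $H^q(\cir,\R^d)$ embeds into $C^0(\cir,\R^d)$ precisely when $q>1/2$; the plan is to promote this to a lower bound on the geodesic distance when $q>1/2$, and to exploit its failure to build vanishingly short paths when $q\le 1/2$. For the direction $q>1/2$ it suffices to bound from below the $G^q$-length of an arbitrary path $t\mapsto c(t,\cdot)$, $t\in[0,1]$, joining two distinct immersions $c_0\neq c_1$ (distinct as maps $\cir\to\R^d$, so that we genuinely separate points of $\Imm$). Writing $h=\partial_t c$ and using $c_1-c_0=\int_0^1 h(t,\cdot)\,dt$, I would reduce to bounding $\int_0^1\norm{h(t,\cdot)}_{L^\infty}\,dt$ from below, and then apply the Sobolev embedding on the circle equipped with the arc-length measure $ds=\abs{\partial_\theta c}\,d\theta$ to get $\norm{h}_{L^\infty}\le C_q(\ell(c))\sqrt{G^q_c(h,h)}$, where $\ell(c)$ is the length of $c$ and $C_q(\ell)\asymp\ell^{-1/2}\big(\sum_{k\in(2\pi/\ell)\Z}(1+k^2)^{-q}\big)^{1/2}$ is finite exactly because $q>1/2$ and stays bounded as long as $\ell$ is bounded away from $0$. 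Hence everything reduces to controlling $\ell(c(t))$ along a short path. For $q\ge 1$ this is easy: $\partial_t\ell(c(t))=\int_{\cir}\ip{D_sc,D_sh}\,ds$ gives $\bigabs{\partial_t\sqrt{\ell(c(t))}}\le\tfrac12\norm{D_sh}_{L^2(ds)}\lesssim\sqrt{G^q_{c(t)}(h,h)}$, so $\sqrt{\ell(c(t))}$ cannot vary by more than a constant times the $G^q$-length of the path, and (restricting harmlessly to paths of length $\le\dist_{G^q}(c_0,c_1)+1$) the arc-length measure stays comparable to that of $c_0$; the Sobolev estimate then yields $\dist_{G^q}(c_0,c_1)\ge c(\ell(c_0))\,\norm{c_0-c_1}_{L^\infty}>0$.

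The main obstacle is the range $1/2<q<1$, where $G^q_c$ no longer dominates $\norm{D_sh}_{L^2(ds)}^2$ and a curve can genuinely be contracted toward a point along a path of finite $G^q$-length: a homothety $c\mapsto\lambda c$ has $G^q$-speed $\asymp\abs{\dot\lambda}\,\lambda^{(1-2q)/2}$, and $\int_0^1\lambda^{(1-2q)/2}\,d\lambda$ is finite for $q<3/2$ (this is why metric completeness will need $q>3/2$) though strictly positive. My plan here is a dichotomy: fix $\ell_*>0$; for a given path either $\ell(c(t))\ge\ell_*$ throughout, and the Sobolev estimate with the uniform constant $C_q(\ell_*)$ bounds its length below by $C_q(\ell_*)^{-1}\norm{c_0-c_1}_{L^\infty}$, or $\ell(c(t))$ drops below $\ell_*$ at some time, and then the path must already have $G^q$-length at least some $\beta(\ell(c_0),\ell_*)>0$ to achieve the contraction; optimizing over $\ell_*$ gives $\dist_{G^q}(c_0,c_1)>0$. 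The hard step is the second alternative — a lower bound on the cost of shrinking valid for \emph{arbitrary} paths, not merely homotheties — which I expect to follow from the scaling analysis together with a frequency-splitting (Littlewood--Paley) argument showing that high-frequency parts of $h$ are expensive in $G^q$ yet scarcely alter $\ell(c)$, so that homotheties are, up to constants, the cheapest contractions.

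For the direction $q\le 1/2$ the goal is the reverse, $\dist_{G^q}\equiv 0$: for any $c_0,c_1$ and any $\varepsilon>0$ one must produce a path between them of $G^q$-length $<\varepsilon$. I would follow the Michor--Mumford collapse-and-oscillate strategy \cite{michor2005vanishing}: deform $c_0$ into a rapidly oscillating curve squeezed into a tiny ball around a point, do the same for $c_1$ about the same point, and join the two collapsed curves. The oscillatory deformations are cheap precisely because, for $q\le 1/2$, the $H^q$-norm does not control $C^0$, so oscillations of high frequency $N$ can be introduced and the intermediate curve collapsed at a $G^q$-cost that tends to $0$ as $N\to\infty$; the borderline case $q=1/2$ needs a slightly sharper estimate (iterating many such moves for a logarithmic gain) but still collapses. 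The substance of this direction, as in \cite{michor2005vanishing}, is the simultaneous tuning of oscillation amplitude, oscillation frequency and contraction scale so that every stage of the path is short — and it is exactly at $q=1/2$ that this bookkeeping ceases to close, matching the threshold found above.
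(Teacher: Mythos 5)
Your plan for $q>1/2$ works as written only for $q\ge 1$; for the critical range $q\in(1/2,1)$ the entire argument rests on the ``second alternative'' of your dichotomy, namely a uniform lower bound $\beta(\ell(c_0),\ell_*)>0$ on the $G^q$-cost of \emph{any} path along which the length drops below $\ell_*$, and this you only conjecture (via a scaling heuristic plus an unproven Littlewood--Paley claim that homotheties are essentially the cheapest contractions). That is exactly the hard point, and the paper does not solve it --- it avoids it. The missing idea is elementary: the diameter of a closed curve is at most half its length, so if $l_{c_1}\le\diam(c_0)$ then $\|c_0-c_1\|_\infty\ge\tfrac14\diam(c_0)$ (Lemma~\ref{diameter}). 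One then introduces the stopping time $t_0$, the last time up to which $l_{c(s)}\ge\diam(c_0)$, applies the reparametrization-invariant Sobolev embedding \eqref{eq:Sob_embed} with $\ell=\min\{1,\diam(c_0)\}$ only on $[0,t_0]$, and notes that either $t_0=1$ (so the $L^\infty$ displacement $\|c_0-c_1\|_\infty$ is controlled) or $l_{c(t_0)}=\diam(c_0)$, in which case the displacement is already at least $\tfrac14\diam(c_0)$. No estimate on the cost of shrinking is ever needed; your proposal leaves precisely that step as a gap.

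The direction $q\le 1/2$ is also a genuine gap, and moreover aims at a stronger statement than the theorem requires or than is known. You propose to show $\dist_{G^q}\equiv 0$ for \emph{all} pairs by a Michor--Mumford collapse-and-oscillate construction, but that result is established only for $q=0$ \cite{michor2006riemannian}; for $0<q<1/2$ (let alone $q=1/2$) complete collapse is explicitly left open in this paper, and the delicate ``bookkeeping'' you defer is exactly the unresolved content, so citing \cite{michor2005vanishing} does not close it. For the ``only if'' direction of the theorem one distinct pair at distance zero suffices, and the paper produces it differently: take $c_1=c_0\circ\varphi_1$, bound the $G^q$-length of the induced path $c_0\circ\varphi(t)$ by a constant (depending on $c_0$) times the right-invariant $H^q$-length of $\varphi(t)$ in $\Diff(\cir)$ using the product estimate \eqref{full norm product} and the fact that $l_{c(t)}$ is constant along such paths, and then invoke the vanishing of the right-invariant $H^q$ geodesic distance on $\Diff(\cir)$ for $q\le 1/2$ \cite{bauer2013geodesic}. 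You should either adopt this reduction or supply the full oscillation construction with quantitative estimates for fractional $q$, which is a substantial open-ended task rather than a routine adaptation.
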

A more detailed version of this Theorem is given in Theorem~\ref{non-vanishing}.
The fact that for $q=0$ the geodesic distance collapses was obtained by Michor--Mumford \cite{michor2006riemannian}, who also showed that the geodesic distance is not degenerate on the quotient shape space for $q\ge 1$.
Using different methods we extend their results in both directions.

Our second main result concerns the well-posedness of the corresponding geodesic equation: Bauer--Bruveris--Kolev~\cite{bauer2018fractional}  showed that these equations are locally well-posed when the order of the metric is at least $1$. 
Here we determine the critical index for global existence, i.e,
geodesic completeness of the metric:

\begin{theorem*}[Geodesic completeness]
    The reparametrization-invariant Sobolev metric $G^q$ on the space  of smooth immersed closed curves $\Imm(\cir,\R^d)$ is geodesically-complete if $q>3/2$ and is not if $q<3/2$.
\end{theorem*}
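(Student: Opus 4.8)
The two implications call for different techniques; the positive half follows the paradigm ``metric completeness $\Rightarrow$ geodesic completeness'', carried out in the completed space $\Imm^q(\cir,\R^d)$ of $H^q$-immersions, while the negative half is proved by exhibiting geodesics whose arc-length element degenerates in finite time --- possible precisely because the Sobolev embedding $H^{q-1}\hookrightarrow C^0$, the backbone of the positive argument, is lost at and below $q=3/2$. For $q>3/2$ we have $q-1>1/2$, so the derivative of an $H^q$ curve lies in $H^{q-1}\hookrightarrow C^0(\cir)$; hence $\Imm^q(\cir,\R^d)$ is an open subset of the Hilbert space $H^q(\cir,\R^d)$, a smooth Hilbert manifold on which $G^q$ extends as a smooth weak Riemannian metric. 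There the geodesic equation is locally well posed by the Bauer--Bruveris--Kolev theorem (valid since $q\ge 1$), with a lower bound on the existence time that is uniform on $H^q\times H^q$-bounded subsets of $T\Imm^q$ staying a fixed distance from the non-immersions, and the geodesic spray is a $C^1$ vector field on $T\Imm^q$. I would also use that geodesics have constant speed (energy conservation) and establish ``no loss, no gain'' of spatial regularity: a geodesic with $C^\infty$ initial data stays $C^\infty$ on the same maximal interval as the ambient $H^q$-solution. It then suffices to show every $G^q$-geodesic in $\Imm^q$ is global.

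\textbf{Metric completeness.} The crux is that $(\Imm^q(\cir,\R^d),\dist^{G^q})$ is a complete metric space. I would prove this via differential inequalities along an arbitrary $C^1$ path $t\mapsto c(t,\cdot)$ in $\Imm^q$: the quantities $\sup_\theta|\partial_\theta c|$, $\sup_\theta|\partial_\theta c|^{-1}$ and $\|c(t,\cdot)\|_{H^q}$ all have $t$-derivative bounded by $\|\partial_t c(t,\cdot)\|_{G^q_{c(t,\cdot)}}$ times a factor depending only on lower-order norms of $c$; Gronwall then shows that a path of finite $G^q$-length remains in an $H^q$-bounded, uniformly immersed set. Hence a $\dist^{G^q}$-Cauchy sequence is $H^q$-bounded and uniformly immersed, so it has a weak-$H^q$ (and strong-$H^{q'}$, $q'<q$) limit, which lies in $\Imm^q$, and one upgrades that convergence to $\dist^{G^q}$.

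\textbf{From metric to geodesic completeness.} Given completeness, the standard Hopf--Rinow-type argument applies: if a geodesic $c$ is maximal on $[0,T)$ with $T<\infty$, its $G^q$-length equals $T\,\|\partial_t c(0,\cdot)\|_{G^q}<\infty$ by constant speed, so $c(t,\cdot)\to c(T,\cdot)\in\Imm^q$ as $t\to T$; the velocity $\partial_t c$ stays $H^q$-bounded (comparability of $G^q$- and $H^q$-norms on the uniformly immersed set) and, since the geodesic spray is $C^1$, $\partial_t^2 c$ is $H^q$-bounded as well, whence $\partial_t c(t,\cdot)$ converges in $H^q$; local well-posedness at $(c(T,\cdot),\partial_t c(T,\cdot))$ then continues the geodesic past $T$, a contradiction. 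The ``no loss, no gain'' principle transfers this to smooth immersions.

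\textbf{Main obstacle, and the negative direction.} I expect the hard part of the positive half to be the completeness estimates: the operator defining $G^q$ depends on the footpoint $c$, so differentiating $\|c\|_{H^q}$-type quantities along a path produces commutators involving fractional derivatives of $c$, which must be tamed by interpolation and shown to be of strictly lower order so the Gronwall argument closes --- the integer-order arguments of Bruveris and Bruveris--Michor--Mumford do not apply verbatim. For $q<3/2$ I would construct explicit incomplete geodesics. The guiding point is that the bound on $\sup_\theta|\partial_\theta c|^{-1}$ breaks down when $H^{q-1}\not\hookrightarrow C^0$, so one expects solutions whose arc-length element vanishes in finite time. I would look for a tractable flow-invariant subfamily --- e.g.\ curves in $\R^2$ invariant under a finite group acting simultaneously on $\cir$ and on $\R^2$ --- on which the geodesic PDE reduces to a scalar Burgers-type equation with a nonlocal lower-order correction; for $q<3/2$ this correction is too weak to prevent shock formation, and a shock forces $|\partial_\theta c|\to0$, i.e.\ the geodesic leaves $\Imm^{q'}$ for every $q'$ in finite time. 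Verifying the invariance of the reduced system and the blow-up of the reduced equation --- or, where available, invoking known finite-time blow-up for the associated fractional-order PDE --- is the remaining obstacle.
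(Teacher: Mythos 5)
For the positive direction ($q>3/2$) your plan coincides with the paper's: prove metric completeness of the space of $H^q$-immersions with the (strong) $G^q$-metric via Gronwall-type bounds on $|c_\theta|^{\pm1}$, $l_c^{\pm1}$ and the $H^q$-norm along finite-$G^q$-length paths, deduce geodesic completeness from the one surviving implication of Hopf--Rinow for strong metrics, and transfer to smooth curves by an Ebin--Marsden no-loss-no-gain argument. One small caution: your step ``weak-$H^q$ limit, then upgrade to $\dist_{G^q}$-convergence'' is glossed; the paper instead shows that on $G^q$-metric balls the $G^q$- and $H^q$-norms are uniformly equivalent, so a $\dist_{G^q}$-Cauchy sequence is Cauchy in the $H^q$-\emph{norm}, and then uses that the strong metric's distance topology coincides with the $H^q$-topology. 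You should replace the weak-compactness step by this Lipschitz comparison (which you in fact have available once your Gronwall estimates close), since weak convergence alone does not give convergence in the geodesic distance without an extra lower-semicontinuity argument.

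The genuine gap is in the negative direction ($q<3/2$). You do not produce an incomplete geodesic: you propose to search for a symmetry-reduced subfamily on which the geodesic equation becomes a Burgers-type equation and to rely on shock formation forcing $|\partial_\theta c|\to 0$, and you explicitly leave the invariance of the reduction and the blow-up unverified. This is both incomplete and likely the wrong mechanism: a shock in the parametrization with essentially fixed image is a motion in the $\Diff(\cir)$-direction, and for $1\le q<3/2$ the spray is smooth and locally well posed, so incompleteness cannot be read off from a loss of the embedding $H^{q-1}\hookrightarrow C^0$ alone; moreover it is not clear that geodesics of the reparametrization-invariant curve metric ever develop such parametrization shocks. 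The paper's argument is elementary and of a different nature: the one-parameter family $\mathcal{C}$ of concentric circles is a totally geodesic, finite-dimensional submanifold, so by the classical Hopf--Rinow theorem it suffices to show $\mathcal{C}$ is metrically incomplete; the scaling path $c(t)=(1-t)(\cos\theta,\sin\theta)$ has $G^q$-speed of order $(1-t)^{1/2}+(1-t)^{\frac12-q}$, whose integral over $[0,1)$ is finite exactly when $q<3/2$, so a circle collapses to a point at finite distance. The blow-up mechanism is thus degeneration of the length $l_c$ (the weights $l_c^{1-2q}$ in the metric), not wave breaking; without this (or some other concrete construction) your proof of the $q<3/2$ half does not go through.
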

A more detailed version of this theorem, including for results on geodesic convexity and metric completeness for curves of Sobolev regularity, is given in Theorem~\ref{main theorem}. 
Together with the local well-posedness result for the geodesic equation \cite{bauer2018fractional}, our result implies that the corresponding geodesic equation is globally well-posed for $q>3/2$.
This was previously known only for integer order metrics $q\ge 2$ \cite{bruveris2015completeness}.

The proof of this result extends a method previously used for proving completeness of integer-order metrics \cite{bruveris2015completeness,bauer2020sobolev}: 
first proving that for $q>3/2$ the space of $H^q$-Sobolev immersions, endowed with our reparametrization-invariant Sobolev metric of order $q$, is \emph{metrically}-complete. 
Since this metric is a strong metric on this space, the geodesic completeness then follows by the only part of the Hopf-Rinow theorem that holds in infinite dimensions. 
The geodesic completeness in the smooth category is shown by an Ebin-Marsden-type no-loss-no-gain argument~\cite{ebin1970}. 
The main challenges here are the more complicated estimates that arise due to the fractional order norms. 
The proof for the geodesic incompleteness for $q<3/2$ follows from a simple example of shrinking circles.

\begin{table}[]
    \centering
    \begin{tabular}{c|c|c|c|c|c|c|c|c}
        $q$ (order of the metric) & $0$ & $(0,\frac{1}{2})$ & $\frac{1}{2}$ & $(\frac{1}{2},1)$ & $1$ & $(1,\frac{3}{2})$ & $\frac{3}{2}$ & $>\frac{3}{2}$ \\
        \hline
        smoothness\footnotemark[1]  & \blue{\xmark} \red{\xmark}  & \blue{\xmark} \red{\xmark} & \blue{\vmark} \red{\xmark} & \blue{\vmark} \red{\xmark} &
        \blue{\vmark} \red{\vmark}  & \blue{\vmark}  \red{\vmark}  & \blue{\vmark}  
        \red{\vmark}  & \blue{\vmark}  \red{\vmark} \\        
        metric space\footnotemark[2]  & \blue{\xmark}  \red{\xmark} & 
        \blue{\xmark}  \red{\xmark} & \blue{\xmark}  \red{\xmark} & \blue{\vmark}  \red{\vmark} & \blue{\vmark}  \red{\vmark} & \blue{\vmark}  \red{\vmark} & \blue{\vmark}  \red{\vmark} & \blue{\vmark}  \red{\vmark}\\
        geodesic\ completeness\footnotemark[3] & \blue{\xmark} \red{\xmark} & 
        \blue{?} \red{\xmark} & \blue{\xmark} \red{\xmark} & \blue{?} \red{\xmark} & 
        \blue{\xmark} \red{\xmark} & \blue{?} \red{\xmark} &
        \blue{\vmark} \red{?} & \blue{\vmark} \red{\vmark}\\
    \end{tabular}
    \vspace{0.2cm}
    \caption{Geometric properties of right-invariant $H^q$ metrics on $\Diff(\cir)$ (\blue{blue}) and $\Imm(\cir,\R^d)$ (\red{red}). Smoothness refers to whether the geodesic spray extends to a smooth vector field on the space of Sobolev diffeomorphisms (curves, resp.), and implies local well-posedness of the geodesic equation in both the Sobolev and smooth categories. Together with geodesic completeness this implies global well-posedness. 
    The contribution of this paper is virtually to all the results in red in the last two lines.}
    \label{tab:Imm_vs_Diff}
\end{table}
\footnotetext[1]{For the results concerning smoothness of the extended spray for $\Diff(\cir)$ we refer to \cite{constantin2002geometric,escher2011geometry} and for $\Imm(\cir,\mathbb R^d)$ to \cite{bauer2018fractional} (this is not an extensive list, and also the ones below are not).}
\footnotetext[2]{For the results  concerning the metric space structure of $\Diff(\cir)$ we refer to \cite{bauer2013geodesic,jerrard2019geodesic} and for the previously known cases on $\Imm(\cir,\mathbb R^d)$ to \cite{michor2006riemannian}.}
\footnotetext[3]{For the results concerning the geodesic completeness on $\Diff(\cir)$ we refer to \cite{escherkolev2014,camassa1993integrable,constantin1998wave,bauer2016geometric, preston2018euler,bauer2020geodesic} and for the previously known cases on $\Imm(\cir,\mathbb R^d)$ to \cite{bruveris2014geodesic,bruveris2015completeness}.}

\subsection*{Comparisons with results for right-invariant metrics on diffeomorphism groups.}
The results of this paper are analogous to those on Sobolev metrics on the group of diffeomorphism of the circle $\Diff(\cir)$; in particular, the critical exponents for completeness and vanishing-distance turn out to be the same, see Table~\ref{tab:Imm_vs_Diff}. 

While the eventual results are mostly similar for $\Diff(\cir)$ and $\Imm(\cir,\R^d)$, obtaining them for $\Imm(\cir,\R^d)$ is generally significantly harder. 
The reason for this is that $\Imm(\cir,\R^d)$ is a much richer space --- one can think of $\Diff(\cir)$ as a subspace of $\Imm(\cir,\R^d)$ of curves with a fixed image. 
From another perspective, results on $\Diff(\cir)$ can be reduced to arguments over the Lie algebra $\mathfrak{X}(\cir)$ of vector fields, whereas there is no equivalent for this on $\Imm(\cir,\R^d)$.
This can be seen, for example, in the proof of non-vanishing distance of the geodesic distance for $q>1/2$ (Theorem~\ref{non-vanishing}):
In the case of $\Diff(\cir)$, this is a simple application of the embedding $L^\infty\subset H^q$ \cite{bauer2013geodesic}.
However, using the same embedding for paths in $\Imm(\cir,\R^d)$, we obtain weights, depending on the length of the curves in the path (whereas in the case of $\Diff(\cir)$ the length is fixed); these lengths are not controlled from below when $q\in (1/2,3/2]$, and thus a more convoluted argument is needed.

Similarly the global well-posedness for $q>\frac{3}{2}$ on $\Diff(\cir)$ follows directly by abstract arguments~\cite{bruveris2017completeness}, using the fact that the $G^q$-metric extends to a strong, right invariant metric on the space of Sobolev diffeomorphisms of regularity $q$.  Similar arguments can be used to show that the $G^q$-metric induces a strong, reparametrization-invariant metric on the space of curves of Sobolev regularity $q$. Due to the more intricate nature of $\Imm(\cir,\mathbb R^d)$ this result cannot be used to directly conclude global well-posedness. Instead one has to carefully bound the dependence of several geometric quantities on $G^q$-metric balls and use this to prove metric completeness by direct estimates. 

\subsection*{Future directions}
The results of this paper also work for scale-invariant versions of the metrics $G^q$, and other length weights; it would be interesting to find optimal (or nearly optimal) conditions on the length-weights for which completeness holds for $q>3/2$, in the spirit of \cite{bruveris2017completeness2}.
We expect  our results to also extend to the case of manifold-valued curves, by combining the techniques of this paper with those of \cite{bauer2020sobolev}.
Regarding vanishing geodesic distance, it is still open whether, for $q<1/2$, the geodesic distance collapses completely, and whether it collapses also on shape space, which both hold in the case $q=0$ \cite{michor2006riemannian}.
Finally, we note that the geodesic completeness for the critical index, i.e., for the $G_c^{3/2}$ metric on $\Imm(\cir,\R^d)$, is still open. The completeness for the corresponding metric on $\Diff(\cir)$ was recently established \cite{bauer2020geodesic}.

\subsection*{Structure of the paper}
In Section~\ref{sec:prelim} we provide the necessary background, including definitions of the fractional Sobolev norms we use in this paper, the space of Sobolev immersions, and the reparametrization-invariant metrics. We also prove some useful inequalities, both on the Sobolev norms and on the Riemannian metrics. In Section~\ref{sec:geodesic_distance} we restate and prove the results regarding the geodesic distance. 
In Section~\ref{sec:completeness} we restate and prove the results regarding completeness properties. 
The appendix contains proofs of some results for fractional Sobolev spaces used throughout the paper.

\subsection*{Acknowledgements}
CM and MB were partially funded by BSF grant \#2022076. MB was partially funded by NSF grant DMS-1953244 and by the Austrian Science Fund grant P 35813-N. CM was partially funded by ISF grant 1269/19. PH was supported by the National University of Ireland's Dr. {\'E}amon de Valera Postdoctoral Fellowship.

\section{Preliminaries}\label{sec:prelim}

\subsection{Fractional Sobolev Spaces}\label{interpolation estimates}
Here we record some definitions and estimates pertaining to fractional Sobolev spaces. 
Our presentation follows closely that of Escher and Kolev \cite{escherkolev2014}. 

Throughout this paper, we identify $\cir = \R/\mathbb{Z}$, and let $\theta\in [0,1]$ be a parametrization of $\cir$ (with $0\sim 1$).
The fractional Sobolev space $H^q(\cir, \R^d)$ for $q \in \R$ is acquired by completing the space of smooth functions $C^\infty(\cir, \R^d)$ under the norm
\begin{equation}\label{sobolev norm}
    \norm{f}_{H^q}^2 = \sum_{n \in \Z^d} (1+n^2)^q \sabs{\hat{f}(n)}^2,
\end{equation}
where $\displaystyle \hat{f}(n)$ denotes the Fourier transform of $f$.  
We recall the Sobolev embedding theorem for fractional spaces: 
\begin{proposition}[Sobolev Embedding Theorem]
    For $q>\frac{1}{2} + k$ the space $H^q(\cir, \R^d)$ continuously embeds into the classical space $C^k(\cir, \R^d)$ of $k$-times continuously differentiable functions.
\end{proposition}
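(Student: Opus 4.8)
The plan is to argue directly from the Fourier-series definition \eqref{sobolev norm}: given $f\in H^q(\cir,\R^d)$ with $q>\tfrac12+k$, I would show that the Fourier series of $f$ and of each of its formal derivatives $f^{(j)}$, $0\le j\le k$, converges absolutely and uniformly on $\cir$, so that $f$ possesses a $C^k$ representative, together with a bound $\norm{f}_{C^k}\le C(q,k,d)\norm{f}_{H^q}$.

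The computational core is a single Cauchy--Schwarz estimate. Writing $f(\theta)=\sum_{n\in\Z}\hat f(n)e^{2\pi i n\theta}$ and $f^{(j)}(\theta)=\sum_{n\in\Z}(2\pi i n)^j\hat f(n)e^{2\pi i n\theta}$ formally, I insert the factor $(1+n^2)^{q/2}(1+n^2)^{-q/2}$ and obtain
\begin{equation*}
\sum_{n\in\Z}\abs{2\pi n}^j\,\sabs{\hat f(n)}\le (2\pi)^j\left(\sum_{n\in\Z}\frac{n^{2j}}{(1+n^2)^q}\right)^{\!1/2}\norm{f}_{H^q}.
\end{equation*}
The numerical series on the right behaves like $\sum_n n^{2(j-q)}$ and therefore converges exactly when $2(q-j)>1$, i.e.\ when $q>j+\tfrac12$; since $j\le k$ and $q>k+\tfrac12$ by hypothesis, this holds simultaneously for every $j\in\{0,\dots,k\}$. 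Consequently the Weierstrass $M$-test shows each of the series for $f,f',\dots,f^{(k)}$ converges uniformly to a continuous function on $\cir$, and a standard term-by-term differentiation argument identifies the $j$-th one as the classical $j$-th derivative of the first. Since $C^\infty(\cir,\R^d)$ is dense in $H^q$ by definition of the norm, and the displayed inequality bounds $\norm{\cdot}_{C^k}$ by a constant times $\norm{\cdot}_{H^q}$ on this dense subspace, the continuous function just constructed is (a representative of) $f$ itself, and the embedding $H^q(\cir,\R^d)\hookrightarrow C^k(\cir,\R^d)$ is continuous.

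There is essentially no genuine obstacle here: the only place where something must be checked is the convergence of the scalar series $\sum_{n}n^{2j}(1+n^2)^{-q}$, which is precisely what forces the threshold $q>k+\tfrac12$; the remaining ingredients (Cauchy--Schwarz, the $M$-test, term-by-term differentiation, and the density of smooth functions) are routine. The only mild care needed is bookkeeping for the $\R^d$-valued setting, where $\hat f(n)\in\C^d$ and $\sabs{\hat f(n)}$ denotes the Euclidean norm, but this changes none of the estimates.
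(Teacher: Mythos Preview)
Your argument is correct and complete: the Cauchy--Schwarz/Weierstrass $M$-test computation is the standard elementary route to this embedding, and you have identified exactly the condition $q>k+\tfrac12$ that makes the auxiliary series converge. The paper itself does not supply a proof of this proposition at all---it simply cites a standard reference (Triebel, \emph{Theory of Function Spaces})---so your write-up in fact goes beyond what the paper provides.
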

Proofs of this statement can be found in many standard references, e.g., \cite[Section~2.7.1]{triebel1983theory}. We further define the space $H^q(\cir, \cir)$ to consist of all self-maps of the circle which, when composed with any chart, are in $H^q(\cir, \R)$. 
If we require that $q>\frac{3}{2}$, it follows from the Sobolev Embedding Theorem that $H^q(\cir, \cir) \hookrightarrow C^1(\cir, \cir)$. 
Hence, by the Inverse Function Theorem, we may define the space of $H^q$-diffeomorphisms of the circle as $$\D^q(\cir)=H^q(\cir, \cir) \cap \{C^1-\text{diffeomorphisms of } \cir\}.$$ This space is an infinite-dimensional Hilbert manifold modeled on $H^q(\cir, \R)$ c.f., \cite{ebin1970}.  
It is in addition a half-Lie group~\cite{marquis2018half,bauer2023regularity}, i.e., a topological group under composition, where for any $\varphi \in \D^q(\cir)$ right translation
\begin{align*}
    \D^q(\cir) \rightarrow \D^q(\cir) \ ; \ \eta \mapsto \eta \circ \varphi
\end{align*}
is smooth, but left translation
\begin{align*}
    \D^q(\cir) \rightarrow \D^q(\cir) \ ; \ \eta \mapsto \varphi \circ \eta
\end{align*}
is only continuous. It further acts on $H^q(\cir, \R)$ where, again for fixed $\varphi \in \D^q(\cir)$, the following map is smooth
\begin{align*}
    H^q(\cir, \R^d) \rightarrow H^q(\cir, \R^d) \ ; \ f \mapsto f \circ \varphi.
\end{align*}

Throughout our arguments we will require estimates on products and compositions with respect to the \textit{homogeneous} Sobolev seminorm defined by
\begin{align}\label{hom_sobolev norm}
    \norm{f}_{\dot{H}^q}^2 = \sum_{n \in \Z} n^{2q} \sabs{\hat{f}(n)}^2.
\end{align}
Finally, we note that this seminorm can be rewritten as
\[
\norm{f}^{2}_{\dot{H}^q} = \int_{\mathbb{S}^1} \ip{\Lambda^{2q} h,h} \,d\theta,
\]
where $\Lambda := H \partial_\theta$ is the pseudo-differential operator with symbol $\widehat{\Lambda}(m)=\abs{m}$.

Our central estimates are as follows. From a notational standpoint, we will write $\norm{\cdot}_1 \simeq_{R} \norm{\cdot}_2$, $\norm{\cdot}_1 \lesssim_{R} \norm{\cdot}_2$, etc. to indicate an equivalence or inequality is valid up to a constant depending continuously on $R$.

\begin{lemma}\label{lemma:com-pro} Consider the Sobolev spaces and norms as defined above.
\begin{enumerate}[(i)]
\item For $0 < a \leq b$ we have
    \begin{equation}\label{non-invariant nesting}
        \norm{f}_{\dot{H}^{a}} \leq \norm{f}_{\dot{H}^{b}},
    \end{equation}
    for all $f \in H^b(\cir,\R^d)$. \\
    \item For $b>\frac{1}{2}$ and $0\leq a \leq b$, we have the following estimate on products
    \begin{equation}\label{full norm product}
        \norm{f \cdot g}_{{H}^a} \lesssim_{(a,b)} \norm{f}_{{H}^a}\norm{g}_{{H}^b},
    \end{equation}
    for all $f \in H^a(\cir, \R^d)$ and $g \in H^b(\cir,\R^d)$. \\
    \item For $b>\frac{1}{2}$ and $0\leq a \leq b$, we have the following estimate on products for the homogeneous norm
    \begin{equation}\label{product}
        \norm{f \cdot g}_{\dot{H}^a} \lesssim_{(a,b)} |\hat{f}(0)| \norm{g}_{\dot{H}^a} + \abs{\hat{g}(0)}\norm{f}_{\dot{H}^a}+\norm{f}_{\dot{H}^a}\norm{g}_{\dot{H}^b},
    \end{equation}
    for all $f \in H^a(\cir, \R^d)$ and $g \in H^b(\cir,\R^d)$. \\ 
    \item For $0\leq a \leq 1$ we have the following estimate on products for the homogeneous norm
    \begin{equation}\label{unit vector product}
        \norm{f\cdot g}_{\dot{H}^a} \lesssim_{a} \norm{f}_{\dot{H}^a} \norm{g}_{L^\infty}  + \norm{f}_{L^\infty} \norm{g}_{\dot{H}^a},
    \end{equation}
    for all $f,g \in H^a(\cir, \R^d)$. \\
    \item For $b>\frac{3}{2}$ and $0 \leq a \leq 1$, we have the following estimate on compositions
    \begin{equation}\label{composition}
        \norm{f \circ \varphi}_{\dot{H}^a} \leq \norm{(\varphi^{-1})_\theta}_{L^\infty}^{\tfrac{1-a}{2}} \norm{\varphi_\theta}_{L^\infty}^{\tfrac{a}{2}} \norm{f}_{\dot{H}^a},
    \end{equation}
    for all $f \in H^a(\cir, \R^d)$ and $\varphi \in \D^b(\cir)$.
\end{enumerate}
\end{lemma}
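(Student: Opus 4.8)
The plan is to handle the five estimates in turn, reducing each to a short list of standard ingredients — the Fourier description of the norms, a Peetre-type inequality, Young's convolution inequality, the embedding $H^b\hookrightarrow L^\infty$ for $b>\tfrac{1}{2}$, and the Gagliardo double-integral description of $\dot{H}^a$ for $0<a<1$ — with one organizing principle for the sharp statements (ii) and (v): rather than arguing directly for a general exponent $a$, I would prove clean endpoint bounds and then \emph{interpolate the relevant linear operator}. Throughout it suffices to treat scalar-valued $f,g$, the general case following componentwise, or from the pointwise inequalities $\abs{f(\theta)\cdot g(\theta)}\le\abs{f(\theta)}\abs{g(\theta)}$ and $\abs{(f\cdot g)(\theta)-(f\cdot g)(\sigma)}\le\abs{f(\theta)}\abs{g(\theta)-g(\sigma)}+\abs{g(\sigma)}\abs{f(\theta)-f(\sigma)}$. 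Estimate (i) is immediate: $\abs{n}^{2a}\le\abs{n}^{2b}$ for all $\abs{n}\ge1$ when $0<a\le b$, so the defining series is dominated term by term.

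For (ii) I would first prove the algebra estimate $\norm{fg}_{H^b}\lesssim_b\norm{f}_{H^b}\norm{g}_{H^b}$ for $b>\tfrac{1}{2}$. From $\widehat{fg}(n)=\sum_m\hat f(m)\hat g(n-m)$ and $(1+n^2)^{b/2}\lesssim_b(1+m^2)^{b/2}+(1+(n-m)^2)^{b/2}$, the sequence $n\mapsto(1+n^2)^{b/2}\abs{\widehat{fg}(n)}$ is dominated by $\alpha*\abs{\hat g}+\abs{\hat f}*\beta$, where $\alpha,\beta$ are the weighted coefficient sequences of $f,g$ with $\norm{\alpha}_{\ell^2}=\norm{f}_{H^b}$ and $\norm{\beta}_{\ell^2}=\norm{g}_{H^b}$; Young's inequality $\ell^2*\ell^1\hookrightarrow\ell^2$ then reduces matters to the Cauchy--Schwarz bound $\norm{\hat h}_{\ell^1}\lesssim_b\norm{h}_{H^b}$, which holds precisely because $\sum_n(1+n^2)^{-b}<\infty\iff b>\tfrac{1}{2}$. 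Combined with the endpoint $\norm{fg}_{H^0}=\norm{fg}_{L^2}\le\norm{f}_{L^2}\norm{g}_{L^\infty}\lesssim_b\norm{f}_{H^0}\norm{g}_{H^b}$ (Sobolev embedding), this shows that for fixed $g\in H^b$ the multiplication operator $M_g\colon h\mapsto gh$ is bounded on $H^0$ and on $H^b$ with norm $\lesssim_b\norm{g}_{H^b}$; since $[H^0,H^b]_\theta=H^{\theta b}$ with norm-equivalence constants continuous in $(\theta,b)$, complex interpolation gives $M_g$ bounded on $H^a$ with norm $\lesssim_{(a,b)}\norm{g}_{H^b}$ for all $a\in[0,b]$, which is (ii).

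Estimate (iii) I would deduce from (ii) by separating zero modes: with $f=\hat f(0)+f_0$, $g=\hat g(0)+g_0$, $f_0,g_0$ of zero mean, one has $fg=\hat f(0)\hat g(0)+\hat f(0)g_0+\hat g(0)f_0+f_0g_0$; the constant drops out of $\norm{\cdot}_{\dot{H}^a}$, the mixed terms give exactly $\abs{\hat f(0)}\norm{g}_{\dot{H}^a}$ and $\abs{\hat g(0)}\norm{f}_{\dot{H}^a}$, and $\norm{f_0g_0}_{\dot{H}^a}\le\norm{f_0g_0}_{H^a}\lesssim_{(a,b)}\norm{f_0}_{H^a}\norm{g_0}_{H^b}\lesssim_{(a,b)}\norm{f}_{\dot{H}^a}\norm{g}_{\dot{H}^b}$ by (ii) and $\norm{h}_{H^s}\le2^{s/2}\norm{h}_{\dot{H}^s}$ for zero-mean $h$. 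For (iv) with $0<a<1$ I would insert the pointwise Leibniz bound into the Gagliardo representation $\norm{h}_{\dot{H}^a}^2\simeq_a\iint_{\cir\times\cir}\frac{\abs{h(\theta)-h(\sigma)}^2}{d(\theta,\sigma)^{1+2a}}\,d\theta\,d\sigma$ ($d$ the distance on $\cir$; this equivalence is one of the appendix results) and read off $\norm{f\cdot g}_{\dot{H}^a}\lesssim_a\norm{f}_{L^\infty}\norm{g}_{\dot{H}^a}+\norm{g}_{L^\infty}\norm{f}_{\dot{H}^a}$, with $a=1$ (Leibniz rule in $L^2$) and $a=0$ (split off means, then $\norm{\cdot}_{\dot{H}^0}\le\norm{\cdot}_{L^2}$ and $\abs{\hat h(0)}\le\norm{h}_{L^\infty}$) done directly.

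For (v) I would interpolate the composition operator $C_\varphi\colon f\mapsto f\circ\varphi$, after replacing $f$ by $f-\hat f(0)$ (which changes neither side). Since $b>\tfrac{3}{2}$, both $\varphi,\varphi^{-1}$ are $C^1$ with derivatives in $L^\infty$ bounded away from $0$. At $a=0$, the change of variables $\psi=\varphi(\theta)$ gives $\norm{f\circ\varphi}_{\dot{H}^0}^2\le\norm{f\circ\varphi}_{L^2}^2=\int_{\cir}\abs{f(\psi)}^2(\varphi^{-1})_\theta(\psi)\,d\psi\le\norm{(\varphi^{-1})_\theta}_{L^\infty}\norm{f}_{\dot{H}^0}^2$; at $a=1$, the chain rule and the same substitution give $\norm{f\circ\varphi}_{\dot{H}^1}^2=\int_{\cir}\abs{f_\theta(\psi)}^2\varphi_\theta(\varphi^{-1}(\psi))\,d\psi\le\norm{\varphi_\theta}_{L^\infty}\norm{f}_{\dot{H}^1}^2$. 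Thus $C_\varphi$ has operator norm at most $\norm{(\varphi^{-1})_\theta}_{L^\infty}^{1/2}$ on $\dot{H}^0$ and at most $\norm{\varphi_\theta}_{L^\infty}^{1/2}$ on $\dot{H}^1$; since on the circle the $\dot{H}^s$-seminorms, read modulo constants, are isometrically the complex interpolation norms between $s=0$ and $s=1$ (interpolation of weighted $\ell^2$-spaces), the operator interpolation inequality — with constant $1$ — yields exactly $\norm{f\circ\varphi}_{\dot{H}^a}\le\norm{(\varphi^{-1})_\theta}_{L^\infty}^{(1-a)/2}\norm{\varphi_\theta}_{L^\infty}^{a/2}\norm{f}_{\dot{H}^a}$ for $a\in[0,1]$. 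The hard part is sharpness, not the existence of an estimate: a direct change of variables inside the Gagliardo integral does prove a composition bound, but with the wrong exponents and an extra constant, so one really must route (v) through the two endpoints and track constants carefully — to get constant $1$ in (v), and to keep the constant continuous in the exponents in (ii)--(iii). Apart from that the only genuinely analytic inputs are the Sobolev embedding and the Gagliardo representation.
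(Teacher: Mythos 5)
Your proposal is correct, and for parts (i), (iv) and (v) it is essentially the paper's argument: (i) is term-by-term comparison of the Fourier weights, (iv) is the Gagliardo double-integral representation plus the pointwise Leibniz splitting, and (v) is exactly the paper's route of proving the two endpoint bounds $a=0,1$ by change of variables and then interpolating (the paper simply cites Frazier et al./Triebel for that step, whereas you make explicit that, after killing the zero mode, $\dot{H}^a$ is the exact complex interpolation space of weighted $\ell^2$ spaces, which is indeed what gives constant $1$; to be fully rigorous one should, as you implicitly do, pass to mean-zero functions or the quotient modulo constants so that $C_\varphi$ acts on a genuine Banach couple). The genuine difference is in (ii)--(iii). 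The paper takes (ii) as a citation to Escher--Kolev and then proves (iii) by redoing that Fourier-convolution computation in the homogeneous setting: split the convolution sum over $\abs{j}\le\abs{k}$ versus $\abs{j}>\abs{k}$, peel off the $j=0$ and $k=0$ contributions (which produce the $\sabs{\hat f(0)}\norm{g}_{\dot H^a}$ and $\sabs{\hat g(0)}\norm{f}_{\dot H^a}$ terms), and control the remainder by Young's inequality together with $\norm{\,\sabs{n}^{-b}}_{\ell^2}<\infty$ for $b>\tfrac12$. You instead prove (ii) from the two endpoints $a=0$ (Sobolev embedding $H^b\hookrightarrow L^\infty$) and $a=b$ (the algebra property, established by the same Peetre/Young/Cauchy--Schwarz mechanism) via complex interpolation of the multiplication operator $M_g$ on $[H^0,H^b]_\theta=H^{\theta b}$, and then deduce (iii) from (ii) by splitting off the zero modes and using $\norm{h}_{H^s}\le 2^{s/2}\norm{h}_{\dot H^s}$ for mean-zero $h$. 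Both routes are sound; yours avoids repeating the convolution estimate for each intermediate exponent and makes the continuity of the constant in $(a,b)$ automatic, at the price of invoking the identification of intermediate Sobolev spaces as interpolation spaces, while the paper's direct computation stays entirely at the level of elementary Fourier series and yields (iii) with no interpolation-theoretic input. Your handling of the endpoint cases $a=0$ and $a=1$ in (iv) is in fact slightly more careful than the paper's, which only records the Gagliardo argument valid for $0<a<1$.
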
 

Details of the proofs for these estimates can be found in Appendix \ref{appendix}.

\subsection{Riemannian Geometry of Immersed Curves}\label{sec:Riem_geo}

Here we introduce the setting for the results contained in this paper. Further details of the constructions can be found in Michor--Mumford \cite{michor2006riemannian}. 

We consider the space of smooth immersions of $\cir$ into Euclidean space
\begin{equation}
   \Imm(\cir,\R^d) := \{ c \in C^\infty(\cir,\R^d) \ \vert \ \abs{c_\theta} \neq 0 \}.
\end{equation}
It is an open subset of $C^\infty(\cir,\R^d)$ and hence inherits the structure of an infinite-dimensional Frech{\'e}t manifold with tangent space at the point $c \in \Imm(\cir,\R^d)$ given by
\begin{equation}
    T_c\Imm(\cir,\R^d) : = \{h \in C^\infty (\cir,T\R^d) \ \vert \ \pi_{T\R^d} \circ h = c\}.
\end{equation}
Next, we define the space of smooth, orientation preserving diffeomorphisms of $\cir$
\begin{equation}
    \diff(\cir) := \{\varphi\in C^\infty(\cir,\cir) \ \vert \ \varphi \text{ is a bijection}\}.
\end{equation}
This is an infinite-dimensional Frech{\'e}t Lie group which acts on $\Imm(\cir,\R^d)$ on the right by composition
\begin{equation}
    \Imm(\cir,\R^d) \times \diff(\cir) \rightarrow \Imm(\cir,\R^d) \ ; \ (c,\varphi) \mapsto c \circ \varphi.
\end{equation}

We are interested in equipping $\Imm(\cir,\R^d)$ with reparametrization-invariant Riemannian metrics $G$,  i.e., for all $c \in \Imm(\cir,\R^d), h,k \in T_f\Imm(\cir,\R^d)$ and $\varphi \in \diff(\cir)$
\begin{equation}
    G_{c\circ\varphi}\left(h\circ\varphi, k\circ\varphi\right) = G_c(h,k).
\end{equation}
The importance of these metrics stems from the fact that they descend to Riemannian metrics on the \textit{shape space}
\begin{equation}
    \text{B}_i(\cir,\R^d):= \Imm(\cir,\R^d) / \diff(\cir),
\end{equation}
which carries the structure of an infinite-dimensional orbifold. 

A subclass of reparametrization-invariant metrics are Sobolev-type metrics of order $q \in \R$:
\begin{equation}\label{invariant metric}
    G_c^q(h,k) = \int_{\mathbb{S}^1} \ip{L_c^q h,k} \ ds
\end{equation}
where $L^q_c$ is a self-adjoint, invertible, pseudo-differential operator of order $2q$ depending on $c$ in such a way as to keep $G$ reparametrization-invariant and $ds = \abs{c_\theta} d\theta$ denotes integration with respect to arc length. It has been observed in~\cite{bauer2018fractional}, that the requirement that the metric \eqref{invariant metric} be invariant under $\diff(\cir)$ tells us that $\ip{\cdot, \cdot}_{G_c^q}$ is completely determined by its behavior on constant speed curves. In particular, if we define the constant speed reparametrization for $c$:
\begin{equation}
    \psi_c (\theta) = \frac{1}{l_c}\int_0^{\theta} \vert c_\theta(\sigma) \vert \ d\sigma, 
\end{equation}
where $l_c$ denotes the length of $c$, we have $\vert (c \circ \psi^{-1}_c)_\theta\vert = l_c$; which in turn gives us that $L^q_c = R_{\psi_c} L^q_{c \circ \psi_c^{-1}} R_{\psi_c^{-1}}$. We now assume $L^q_c$ has the form:
\begin{equation}
    L^q_c = R_{\psi_c} \left( 1 + \left(\frac{1}{l_c}\right)^{2q}\Lambda^{2q} \right) R_{\psi_c^{-1}},
\end{equation}
where, as before, $\Lambda := H \partial_\theta$ is the pseudo-differential operator with symbol $\widehat{\Lambda}(m)=\abs{m}$. In summary, our full and our homogeneous reparametrization-invariant metrics of interest are, respectively, given by:
\begin{equation}\label{main metric}
    G_c^q(h,k) := \int_{\mathbb{S}^1} \ip{R_{\psi_c} \left( 1 + \left(\frac{1}{l_c}\right)^{2q}\Lambda^{2q} \right)R_{\psi_c^{-1}} h,k} \ ds
\end{equation}
and 
\begin{equation}\label{main homogeneous metric}
    \dot{G}_c^q(h,k) := \int_{\mathbb{S}^1} \ip{R_{\psi_c} \left(\frac{1}{l_c}\right)^{2q}\Lambda^{2q}R_{\psi_c^{-1}} h,k} \ ds.
\end{equation}

We further denote differentiation with respect to arc length by $D_s = \frac{1}{\abs{c_\theta}} \partial_\theta$. It is not difficult to show that $\norm{D_s h}_{\dot{G}_c^q} = \norm{h}_{\dot{G}_c^{q+1}}$. For constant speed curves this follows from integration by parts. 
The result for non-constant speed curves then follows from the reparametrization-invariance of $\dot{G}_c^q$.

We also consider alongside the above space $\Imm(\cir, \R^d)$, its finite smoothness counterparts. 
For ${r}>\frac{3}{2}$ we have the Sobolev completions
\begin{align}
   \mathcal{I}^{r}(\cir,\R^d) := \{ c \in H^{r}(\cir,\R^d) \ \vert \ \abs{c_\theta} \neq 0 \} \ ,\\ T_c\mathcal{I}^{r}(\cir,\R^d) : = \{h \in H^{r}(\cir,T\R^d) \ \vert \ \pi_{T\R^d} \circ h = c\}.
\end{align}

The following result is due to Bauer et al. \cite[Theorems 6.4 and 7.1]{bauer2018fractional}.
\begin{proposition}
    {For $r>\frac{3}{2}$ and $q\in [\frac{1}{2},\frac{r}{2}]$ the metric $G_c^q$ as in \eqref{main metric} is a smooth metric on $\mathcal{I}^{r}(\cir, \R^d)$.
    If $q\ge 1$, then it induces a smooth exponential map on $\mathcal{I}^{r}(\cir, \R^d)$, which is a local diffeomorphism.
    If $q=r$, then the metric extends to a strong metric on $\mathcal{I}^{r}(\cir, \R^d)$, i.e., it induces on each tangent space the original $H^r$-topology.}
\end{proposition}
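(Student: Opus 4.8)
The plan is to reduce everything to the smooth dependence of the data defining $G^q_c$ on the curve $c\in\mathcal{I}^r(\cir,\R^d)$, then to run the Picard--Lindel\"of machinery on the geodesic spray, and finally to establish two elementary norm equivalences; this is essentially the content of \cite[Theorems~6.4 and 7.1]{bauer2018fractional}.

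\emph{Smoothness of the metric.} First I would check that the maps $c\mapsto\abs{c_\theta}\in H^{r-1}$, $c\mapsto l_c\in(0,\infty)$, $c\mapsto\psi_c\in\D^r(\cir)$ and $c\mapsto\psi_c^{-1}\in\D^r(\cir)$ are smooth on $\mathcal{I}^r$. Since $r-1>\tfrac12$, $H^{r-1}$ is a Banach algebra on which smooth functions act smoothly by composition (the $\omega$-lemma), which together with the boundedness of $c\mapsto c_\theta$ handles the first three, while $c\mapsto\psi_c^{-1}$ is dealt with by the implicit function theorem on $\D^r(\cir)$ (here $r>\tfrac32$). The crux is to show that the family $A_c:=R_{\psi_c}\bigl(1+l_c^{-2q}\Lambda^{2q}\bigr)R_{\psi_c^{-1}}$ from \eqref{main metric} depends smoothly on $c$ as operators $H^r\to H^{r-2q}$: the hypothesis $q\le r/2$ is precisely $r-2q\ge0$, so that the coefficients produced by $A_c$ lie at worst in $L^2=H^0$, and then the product estimate \eqref{full norm product} (with $a=0$, $b=r>\tfrac12$) lets them act on $H^r$-valued tangent vectors and pair against $H^r$. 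Although $R_{\psi_c}$ and $R_{\psi_c^{-1}}$ are individually only continuous in $c$ (left composition loses a derivative when differentiated), the \emph{conjugation} is smooth because that loss is compensated; for integer $q$ one sees this explicitly via the chain rule, rewriting the homogeneous term through the arc-length operator $D_s=\abs{c_\theta}^{-1}\partial_\theta$, and I expect the non-integer case --- which requires a pseudo-differential calculus valid for symbols of finite Sobolev regularity --- to be the main obstacle. Granting this, $(c,h,k)\mapsto G^q_c(h,k)$ is smooth and, being at each $c$ a continuous inner product on $T_c\mathcal{I}^r\cong H^r$, is a (generally weak) Riemannian metric.

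\emph{Smooth exponential map for $q\ge1$.} I would realize the geodesic equation as the flow of the spray $S\in\mathfrak{X}(T\mathcal{I}^r)$, $S_{(c,h)}=\bigl(h,\Gamma_c(h,h)\bigr)$, with $\Gamma_c=-\tfrac12(L^q_c)^{-1}(\cdots)$ assembled from the $c$-derivatives of the metric coefficients. The key point is the ``no loss of derivatives'' structure: $(L^q_c)^{-1}$ smooths by order $2q\ge2$, which compensates the derivatives lost in those $c$-derivatives, so that $S$ is a \emph{smooth} vector field on the Hilbert manifold $T\mathcal{I}^r$ (this is where $q\ge1$ enters, $q\le r/2$ again serving for the bookkeeping). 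Picard--Lindel\"of then yields a local flow depending smoothly on time and initial data, hence a smooth exponential map $\exp_c$, which is a local diffeomorphism because $d(\exp_c)_0=\mathrm{id}_{T_c\mathcal{I}^r}$.

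\emph{Strong metric for $q=r$.} By reparametrization invariance $G^r_c(h,h)=G^r_{\tilde c}(\tilde h,\tilde h)$ with $\tilde c=c\circ\psi_c^{-1}$ of constant speed $l_c$ and $\tilde h=h\circ\psi_c^{-1}$, and from \eqref{main metric} this equals $l_c\norm{\tilde h}_{L^2}^2+l_c^{1-2r}\norm{\tilde h}_{\dot H^r}^2$, which by \eqref{sobolev norm}, \eqref{hom_sobolev norm} and $l_c>0$ is equivalent to $\norm{\tilde h}_{H^r}^2$ with constants depending only on $l_c$. Since $R_{\psi_c^{-1}}\colon H^r\to H^r$ is a bounded isomorphism with inverse $R_{\psi_c}$, and their operator norms are controlled by $\norm{\psi_c}_{H^r}$, $\norm{\psi_c^{-1}}_{H^r}$ (smoothness of the $\D^r$-action on $H^r$) and hence by $\norm c_{H^r}$ and $\norm{\abs{c_\theta}^{-1}}_{L^\infty}$, we get $\norm{\tilde h}_{H^r}\simeq\norm h_{H^r}$ with such constants. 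Combining, $G^r_c(h,h)\simeq\norm h_{H^r}^2$ with constants depending only on $c$, i.e.\ $G^r$ is a strong metric on $\mathcal{I}^r(\cir,\R^d)$.
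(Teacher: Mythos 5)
You should first note that the paper itself does not prove this proposition: it is quoted from Bauer--Bruveris--Kolev \cite[Theorems 6.4 and 7.1]{bauer2018fractional}, so the only benchmark is that reference, whose overall strategy (smooth dependence of $L^q_c$ on $c$, a smooth geodesic spray obtained by inverting $L^q_c$, Banach-manifold ODE theory for the exponential map, and a norm equivalence for $q=r$) your outline does follow. Your treatment of the $q=r$ case is correct and essentially complete: reparametrization invariance, the identity $G^r_c(h,h)=l_c\norm{\tilde h}_{L^2}^2+l_c^{1-2r}\norm{\tilde h}_{\dot H^r}^2$, and boundedness of composition with a fixed $\D^r$-diffeomorphism give $G^r_c(h,h)\simeq\norm{h}_{H^r}^2$ with $c$-dependent constants.

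The genuine gap is the step you flag and then skip with ``Granting this'': the smoothness (in $c$, equivalently in $\psi_c\in\D^r(\cir)$) of the conjugated operator $R_{\psi_c}\Lambda^{2q}R_{\psi_c^{-1}}$ is not a routine verification but is precisely the content of the cited theorems, and without it neither the smoothness of the metric nor of the spray is established. Your heuristic --- that the derivative loss from left composition is ``compensated,'' visible via the chain rule through $D_s$ for integer $q$ --- has no direct analogue for fractional $q$, because $\Lambda^{2q}$ is nonlocal and obeys no chain or Leibniz rule; the reference does not invoke a general rough-symbol pseudodifferential calculus either, but proves tailored commutator and composition estimates for the Fourier multiplier $\Lambda^{2q}$ against multiplication by functions of finite Sobolev regularity. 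The exponential-map paragraph inherits the same gap: the ``no loss of derivatives'' structure of the spray requires controlling the $c$-derivatives of exactly these conjugated fractional operators, and smoothness of $c\mapsto(L^q_c)^{-1}$ again rests on smoothness of the direct family. A telltale sign is that the hypothesis $q\ge\tfrac12$ in the statement never enters your argument at all --- it is needed in the deferred estimates --- while your sketch only uses $q\le r/2$ and $q\ge1$. So the architecture is right, but the analytic core of the proposition is assumed rather than proved.
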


We now express the induced homogeneous reparametrization-invariant norm in terms of the usual homogeneous Sobolev norm.

\begin{lemma}\label{relating homogeneous norms}For $r>\frac{3}{2}$, $c \in \mathcal{I}^r$ and $h \in T_c \mathcal{I}^r$ and $q\leq r$ we have
    \begin{equation}\label{invariant versus non-invariant}
        \norm{h}_{\dot{G}_c^q}^2 = l_c^{1-2q}\norm{h \circ \psi_c^{-1}}_{\dot{H}^q}^2.
    \end{equation}
    Moreover, the subsequent inequality holds {for any $q_1\le q_2\le r$}
    \begin{equation}\label{invariant nesting}
        \norm{h}_{\dot{G}_c^{{q_1}}} \leq l_c^{{q_2-q_1}}\norm{h}_{\dot{G}_c^{q_2}}.
    \end{equation}
    Lastly, when $1 \leq q \leq r$ we have
    \begin{equation}\label{homogeneous versus full norm nesting}
        \norm{h}_{\dot{G}_c^1} \leq \norm{h}_{{G}_c^q}.
    \end{equation}
\end{lemma}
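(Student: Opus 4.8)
The plan is to derive all three identities/inequalities from the definitions \eqref{main metric}--\eqref{main homogeneous metric} by performing the change of variables induced by the constant-speed reparametrization $\psi_c$, and then, for the final inequality \eqref{homogeneous versus full norm nesting}, to reduce matters to a single elementary estimate on Fourier coefficients.

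For \eqref{invariant versus non-invariant} I would unwind \eqref{main homogeneous metric}: setting $f := R_{\psi_c^{-1}}h = h\circ\psi_c^{-1}$, the integrand becomes $l_c^{-2q}\,\bigl\langle(\Lambda^{2q}f)\circ\psi_c,\,h\bigr\rangle$, integrated against $ds = \abs{c_\theta}\,d\theta$. Substituting $\varphi=\psi_c(\theta)$, so that $\abs{c_\theta}\,d\theta = l_c\,d\varphi$ and $h(\theta)=f(\varphi)$, the composition with $\psi_c$ disappears and one is left with $l_c^{1-2q}\int_{\cir}\ip{\Lambda^{2q}f,f}\,d\varphi = l_c^{1-2q}\norm{f}_{\dot H^q}^2$, using the $\Lambda$-characterization of the homogeneous seminorm recorded in the preliminaries. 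The only technical point is to justify that $f$ lies in a space making these manipulations legitimate: since $c\in\mathcal I^r$ with $r>\tfrac32$ we have $\abs{c_\theta}\in H^{r-1}$ bounded away from zero, hence $\psi_c\in\D^r(\cir)$, and boundedness of composition on $H^r$ gives $f\in H^r\subseteq H^q$ for $q\le r$.

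Granting \eqref{invariant versus non-invariant}, the nesting \eqref{invariant nesting} is immediate: apply \eqref{invariant versus non-invariant} at the exponents $q_1$ and $q_2$, bound $\norm{f}_{\dot H^{q_1}}\le\norm{f}_{\dot H^{q_2}}$ via \eqref{non-invariant nesting}, and collect the powers of $l_c$; taking square roots leaves the factor $l_c^{q_2-q_1}$.

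The substantive step is \eqref{homogeneous versus full norm nesting}. The same change of variables applied to \eqref{main metric} gives $\norm{h}_{G_c^q}^2 = l_c\norm{f}_{L^2}^2 + l_c^{1-2q}\norm{f}_{\dot H^q}^2$, the ``$1$'' in the operator contributing the $L^2(ds)$ term and the $\Lambda^{2q}$ part contributing $\dot G_c^q$, while \eqref{invariant versus non-invariant} at $q=1$ gives $\norm{h}_{\dot G_c^1}^2 = l_c^{-1}\norm{f}_{\dot H^1}^2$. Passing to Fourier coefficients and multiplying by $l_c$, it thus suffices to prove the pointwise inequality $n^2 \le l_c^2 + l_c^{2-2q}\,n^{2q}$ for all $n\in\Z$, any $l_c>0$ and any $q\ge 1$; summing against $\sabs{\hat f(n)}^2$ then yields the claim. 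This inequality follows from the weighted AM--GM (Young) inequality with exponents $1-\tfrac1q$ and $\tfrac1q$, since $(l_c^2)^{1-1/q}(l_c^{2-2q}n^{2q})^{1/q}=n^2$ and the case $n=0$ is trivial. I expect this last reduction --- spotting the convex combination that makes the length weights balance --- to be the only non-mechanical part; everything else is bookkeeping with the reparametrization $\psi_c$ and the cited Sobolev facts.
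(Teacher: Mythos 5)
Your proposal is correct and follows essentially the same route as the paper: the change of variables under $\psi_c$ gives \eqref{invariant versus non-invariant}, the nesting \eqref{invariant nesting} follows from \eqref{non-invariant nesting}, and \eqref{homogeneous versus full norm nesting} reduces to the Fourier-side pointwise inequality $l_c^{-1}\abs{n}^2 \le l_c + l_c^{1-2q}\abs{n}^{2q}$. The only cosmetic differences are that you verify this last inequality via weighted AM--GM rather than the paper's direct case observation, and you carry the substitution $f=h\circ\psi_c^{-1}$ throughout instead of first reducing to constant-speed curves by invariance; both are equally valid.
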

\begin{proof} Applying a change of coordinates we compute directly
    \begin{align*}
        \norm{h}_{\dot{G}_c^q}^2 &= \int_{\cir}\ip{\left(\frac{1}{l_c}\right)^{2q}\Lambda^{2q} R_{\psi_c^{-1}} h, R_{\psi_c^{-1}} h} \ l_c \ d\theta \\
        &= l_c^{1-2q} \int_{\cir}\ip{\Lambda^{2q}R_{\psi_c^{-1}}h, R_{\psi_c^{-1}}h} \ d\theta \\
        &= l_c^{1-2q}\norm{h \circ \psi_c^{-1}}_{\dot{H}^q}^2.
    \end{align*}
    
    The inequality \eqref{invariant nesting} follows immediately from the above and \eqref{non-invariant nesting}.

    Finally, note that, as the metrics \eqref{main metric} and \eqref{main homogeneous metric} are invariant under reparametrization, it will suffice to establish the inequality \eqref{homogeneous versus full norm nesting} on constant speed curves, i.e., where $\psi_c(\theta)=\theta$. By \eqref{invariant versus non-invariant} we acquire that
    \begin{align*}
        \norm{h}_{\dot{G}_c^1}^2 &= l_c^{-1}\norm{h}_{\dot{H}^1}^2 = l_c^{-1} \sum_{j\in \Z^2} \abs{j}^2 \abs{\hat{h}(j)}^2
    \end{align*}
    and
    \begin{align*}
        \norm{h}_{G_c^q}^2 &= \norm{h}_{G_c^0}^2 + \norm{h}_{\dot{G}_c^q}^2 = l_c \norm{h}_{L^2}^2 + l_c^{1-2q}\norm{h}_{\dot{H}^q}^2 \\
        &= l_c \sum_{j\in \Z^2} \abs{\hat{h}(j)}^2 + l_c^{1-2q} \sum_{j\in \Z^2} \abs{j}^{2q} \abs{\hat{h}(j)}^2 \\
        &= \sum_{j \in \Z^2} \left( l_c + l_c^{1-2q}\abs{j}^{2q}\right) \abs{\hat{h}(j)}^2.
    \end{align*}
    Hence, \eqref{homogeneous versus full norm nesting} will hold if, for any $l_c>0$ and $\abs{j} \in \Z^2$, we have
    \begin{align*}
        l_c^{-1} \abs{j}^2 \leq l_c + l_c^{1-2q}\abs{j}^{2q}.
    \end{align*}
    For $\abs{j} = 0$, this is immediate. For $\abs{j}\neq0$ we divide both sides by $l_c^{-1}\abs{j}^2$ and obtain
    \begin{align*}
        1 \leq \left(\frac{l_c}{\abs{j}}\right)^2 + \left(\frac{\abs{j}}{l_c}\right)^{2(q-1)},
    \end{align*}
    which holds for any $q\geq 1$.
    \end{proof}

The next lemma, which establishes a Sobolev Embedding-type theorem for our repara\-metrisation-invariant norms,  extends a result for first order metrics contained in \cite[Lemma~2.14]{bruveris2014geodesic} to fractional orders. 
The important point here is the explicit dependence of the embedding constant on the length of the underlying curve.
\begin{lemma}\label{sobolev embedding} 
If $r>\frac{3}{2}$ and $\frac{1}{2} < q \leq r$, then there exists a constant $C=C(q,d)>0$ such that, for all $h \in T_c \mathcal{I}^r$ and all $\ell \in(0,l_c]$, we have
\begin{equation}\label{eq:Sob_embed}
    \norm{h}_{L^{\infty}(\mathbb{S}^1)} \leq C \sqrt{\frac{1}{\ell} \left(\norm{h}_{G_c^0}^2 +\ell^{2q} 
    \norm{h}_{\dot{G}_c^q}^2\right)}
    {\le C \max\{\ell^{-\frac{1}{2}}, \ell^{q-\frac{1}{2}}\} \norm{h}_{G_c^q}}.
\end{equation}
\end{lemma}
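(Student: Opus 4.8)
The plan is to reduce the reparametrization-invariant estimate to the standard (fixed-length) Sobolev embedding on $\cir$ by rescaling, and then to track carefully how the rescaling affects each norm. Since both sides of \eqref{eq:Sob_embed} are reparametrization-invariant (the $L^\infty$-norm is invariant under composition with a diffeomorphism, and so are $G_c^0$, $\dot G_c^q$, $G_c^q$, and $l_c$), it suffices to prove the inequality for a constant-speed curve $c$, i.e. one with $\psi_c(\theta)=\theta$ and $|c_\theta|\equiv l_c$. For such a curve, $\norm{h}_{G_c^0}^2 = l_c\norm{h}_{L^2}^2$ and, by \eqref{invariant versus non-invariant}, $\norm{h}_{\dot G_c^q}^2 = l_c^{1-2q}\norm{h}_{\dot H^q}^2$. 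The claimed bound therefore becomes a statement purely about the flat Sobolev norms of $h\colon\cir\to\R^d$, with $l_c$ (and the auxiliary parameter $\ell\le l_c$) appearing as explicit weights.

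First I would establish the scale-aware flat embedding: for $q>\tfrac12$ there is $C=C(q,d)$ so that for every $\lambda>0$,
\begin{equation*}
    \norm{h}_{L^\infty(\cir)}^2 \le C\left(\lambda^{-1}\norm{h}_{L^2}^2 + \lambda^{2q-1}\norm{h}_{\dot H^q}^2\right).
\end{equation*}
This follows from the usual Fourier-side proof of $H^q\hookrightarrow C^0$: writing $h(\theta)=\sum_n \hat h(n)e^{2\pi i n\theta}$ one bounds $\norm{h}_{L^\infty}\le \sum_n |\hat h(n)|$ and applies Cauchy--Schwarz against the weight $(1+n^2)^q$, but with the weight split as $\lambda^{-1}+\lambda^{2q-1}|n|^{2q}$ up to constants: indeed $\sum_n (\lambda^{-1}+\lambda^{2q-1}|n|^{2q})^{-1}$ converges for $q>\tfrac12$ and, by the substitution $n\mapsto$ scaling, equals $C(q)$ independent of $\lambda$ (this is the only place $q>\tfrac12$ is used, matching the hypothesis). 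Equivalently, this is the standard embedding applied to $\theta\mapsto h(\theta/\lambda)$ on a circle of circumference $\lambda$. I would include this as a short self-contained computation (or cite the appendix).

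Next I would apply this with the choice $\lambda=\ell$ and combine with the constant-speed identities above. We get
\begin{equation*}
    \norm{h}_{L^\infty}^2 \le C\left(\ell^{-1}\norm{h}_{L^2}^2 + \ell^{2q-1}\norm{h}_{\dot H^q}^2\right)
    = C\,\ell^{-1}\left(\norm{h}_{L^2}^2 + \ell^{2q}\norm{h}_{\dot H^q}^2\right).
\end{equation*}
To pass back to the invariant quantities on the constant-speed curve $c$, substitute $\norm{h}_{L^2}^2 = l_c^{-1}\norm{h}_{G_c^0}^2$ and $\norm{h}_{\dot H^q}^2 = l_c^{2q-1}\norm{h}_{\dot G_c^q}^2$. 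This yields $\norm{h}_{L^\infty}^2 \le C\,\ell^{-1}\big(l_c^{-1}\norm{h}_{G_c^0}^2 + \ell^{2q} l_c^{2q-1}\norm{h}_{\dot G_c^q}^2\big)$, and since $\ell\le l_c$ we have $l_c^{-1}\le 1$... — more precisely, I must be slightly careful here: the target has no $l_c$ on the right. The cleaner route is to redo the scaling bound directly in invariant form: on a constant-speed curve, $\norm{h}_{L^\infty}^2\le C\,\ell^{-1}\norm{h}_{G_c^0}^2\cdot l_c^{-1}\cdot(\cdots)$ is awkward, so instead I would apply the flat embedding with $\lambda=\ell/l_c$ to the \emph{already length-normalized} function; equivalently, use that the invariant norms satisfy exactly $\norm{h}_{G_c^0}^2+\ell^{2q}\norm{h}_{\dot G_c^q}^2 = l_c\big(\norm{h}_{L^2}^2 + (\ell/l_c)^{2q} l_c^{2q}\cdot l_c^{-2q}\norm{h}_{\dot H^q}^2\big)$... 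The point is that a single correct bookkeeping of powers of $l_c$ and $\ell$ delivers the first inequality; I expect this bookkeeping — getting the exponents to land on $\ell^{-1}$, $\ell^{2q}$ with \emph{no} residual $l_c$, using only $\ell\le l_c$ — to be the one genuinely fiddly point of the proof.

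Finally, the second inequality in \eqref{eq:Sob_embed} is elementary algebra: from the first inequality,
\begin{equation*}
    \norm{h}_{L^\infty}^2 \le C\big(\ell^{-1}\norm{h}_{G_c^0}^2 + \ell^{2q-1}\norm{h}_{\dot G_c^q}^2\big)
    \le C\max\{\ell^{-1},\ell^{2q-1}\}\big(\norm{h}_{G_c^0}^2+\norm{h}_{\dot G_c^q}^2\big),
\end{equation*}
and $\norm{h}_{G_c^0}^2+\norm{h}_{\dot G_c^q}^2 = \norm{h}_{G_c^q}^2$ by definition of the full metric \eqref{main metric}; taking square roots and noting $\max\{\ell^{-1/2},\ell^{q-1/2}\}=\sqrt{\max\{\ell^{-1},\ell^{2q-1}\}}$ gives the stated form. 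The main obstacle, as noted, is purely the careful tracking of the length weights $l_c$ and $\ell$ through the rescaling so that the constant $C$ depends only on $q$ and $d$; the analytic content is just the classical fractional Sobolev embedding $H^q\hookrightarrow C^0$ for $q>\tfrac12$.
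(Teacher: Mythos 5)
Your strategy coincides with the paper's: both reduce by reparametrization invariance to a constant-speed curve and then invoke a scale-aware flat Sobolev embedding (the paper cites Leoni for the flat inequality and writes out the reduction explicitly only for $\ell=l_c$). However, as written your argument has two linked gaps at exactly its central step. First, your auxiliary inequality is asserted ``for every $\lambda>0$'' with $C=C(q,d)$, and that is false: for $h\equiv\mathrm{const}$ one has $\norm{h}_{L^\infty}=\norm{h}_{L^2}$ and $\norm{h}_{\dot H^q}=0$, so the claimed bound reads $1\le C\lambda^{-1}$ and fails for large $\lambda$. Concretely, the dual sum in your Cauchy--Schwarz step is not scale-invariant: $\sum_n\bigl(\lambda^{-1}+\lambda^{2q-1}|n|^{2q}\bigr)^{-1}=\lambda\sum_n\bigl(1+(\lambda n)^{2q}\bigr)^{-1}\le \lambda+C_q$, whose $n=0$ term alone is $\lambda$; the constant is uniform only on a bounded range such as $\lambda\in(0,1]$. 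This restriction is not cosmetic --- it is precisely where the hypothesis $\ell\le l_c$ must enter. Second, the bookkeeping you flag as ``the one genuinely fiddly point'' and leave unfinished (your first choice $\lambda=\ell$ indeed cannot work, as you noticed) is resolved exactly by the choice you name but do not carry out, $\lambda=\ell/l_c\in(0,1]$.

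With that choice the computation closes in two lines and leaves no residual $l_c$: for constant-speed $c$ one has $\norm{h}_{L^2}^2=l_c^{-1}\norm{h}_{G_c^0}^2$ and $\norm{h}_{\dot H^q}^2=l_c^{2q-1}\norm{h}_{\dot G_c^q}^2$, hence
\[
\lambda^{-1}\norm{h}_{L^2}^2+\lambda^{2q-1}\norm{h}_{\dot H^q}^2
=\frac{l_c}{\ell}\cdot\frac{1}{l_c}\norm{h}_{G_c^0}^2
+\Bigl(\frac{\ell}{l_c}\Bigr)^{2q-1} l_c^{2q-1}\norm{h}_{\dot G_c^q}^2
=\frac{1}{\ell}\Bigl(\norm{h}_{G_c^0}^2+\ell^{2q}\norm{h}_{\dot G_c^q}^2\Bigr),
\]
with a constant depending only on $q$ and $d$. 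Once these two points are repaired your proof is complete and is essentially the paper's; your treatment of the second inequality in \eqref{eq:Sob_embed}, via $\norm{h}_{G_c^q}^2=\norm{h}_{G_c^0}^2+\norm{h}_{\dot G_c^q}^2$ and the factor $\max\{\ell^{-1},\ell^{2q-1}\}$, is correct as it stands.
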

\begin{proof}
This inequality, for standard Sobolev spaces, i.e., with $H^q$ instead of $G^q$, is well-known: 
For integer order $q\ge 1$ this result appears in \cite[Theorem 7.40]{leoni2017first}, and for $q\in (1/2,1)$ it appears in \cite[Theorem 2.8]{leoni2023first}. 
The general case follows by a combination of these results. 
Our case reduces to this case by reparametrization, as shown below for the case $\ell=l_c$.

By the usual Sobolev Embedding Theorem, there exists $C=C(q,d) > 0$ such that $\norm{h}_{L^{\infty}(\mathbb{S}^1)} \leq C \norm{h}_{H^q(d\theta)}$. Hence, we have
\begin{align*}
    \norm{h}_{L^{\infty}(\mathbb{S}^1)} &= \norm{h\circ\psi_c^{-1}}_{L^{\infty}(\mathbb{S}^1)} \\
    & \leq C \norm{h\circ\psi_c^{-1}}_{H^q(d\theta)} \\
    & = C \sqrt{\frac{1}{l_c} \left(\norm{h}_{G_c^0}^2 + \left(\frac{1}{l_c}\right)^{-2q}\norm{h}_{\dot{G}_c^q}^2\right)}.\qedhere
\end{align*}
\end{proof}

\section{Geodesic Distance}\label{sec:geodesic_distance}
In this section we study the induced geodesic distance of our class of metrics.
Recall that any Riemannian metric induces a geodesic distance defined as the infimum over the length of all differentiable paths with fixed end points. 
As mentioned in the introduction, in finite dimensions this will always induce a metric space structure, however, in infinite dimensions this is not necessarily the case. 
We say that an induced geodesic distance is \textit{degenerate} if there exists a pair of points for which we can find an arbitrarily short path connecting them, i.e., the geodesic distance between the points is zero.

Initial investigations into geodesic distance in the context of spaces of immersed curves are due to Michor and Mumford \cite{michor2006riemannian,bauer2012vanishing}. Their results show that the geodesic distance is degenerate if $q=0$, but that it is a true distance on the quotient shape space $B_i(\mathbb{S}^1,\R^d)=\Imm(\mathbb{S}^1,\R^d)/\operatorname{Diff}(\mathbb{S}^1)$ if $q\geq 1$. Similar non-degeneracy results can be obtained on the whole space for $q\ge 1$ using the square-root transform~\cite{srivastava2010shape}. These results naturally raise the question, for which $q\in (0,1)$ this change of behavior occurs. Our main result of this section provides an answer to this question.
\begin{theorem}\label{non-vanishing}
The geodesic distance of the reparametrization invariant Sobolev metric $G_c^q$, denoted $\dist_{G_c^q}$, is non-degenerate if and only if $q>\frac{1}{2}$. 
More precisely,
\begin{enumerate}[(i)]
\item For any $q\leq \frac12$ there exists distinct curves  $c_0\neq c_1\in \Imm(\mathbb{S}^1,\R^d)$ such that $\dist_{G_c^q}(c_0,c_1)=0$.
\item For any $q>\frac{1}{2}$ and any $c_0\neq c_1\in \Imm(\mathbb{S}^1,\R^d)$ the geodesic distance $\dist_{G_c^q}(c_0,c_1)$ is non-zero.
\end{enumerate}
Furthermore, if $q>\frac{1}{2}$ we obtain the bound
\[
\dist_{G_c^q}(c_0,c_1)\geq C \min\{\|c_0-c_1\|_\infty,\diam_{\text{max}}\}\min\{\diam_{\text{max}}^{1/2},1\},
\]
for some constant $C=C(q,n)>0$. Here $\diam(c_i)$ is the diameter of the image of the curve $c_i$ and $\diam_{\text{max}} = \max\{\diam(c_0),\diam(c_1)\}$.

Lastly, if $q\geq 1$ we obtain the additional estimate
\[
\dist_{G_c^q}(c_0,c_1)\geq \sqrt{\int_{\cir} \left|\frac{\partial_\theta c_0}{|\partial_\theta c_0|^{\tfrac12}}-\frac{\partial_\theta c_1}{|\partial_\theta c_1|^{\tfrac12}}\right|^2 d\theta}=
\sqrt{l_{c_0}+l_{c_1}-\int_{\cir} 
\frac{\langle \partial_\theta c_0,\partial_\theta c_1\rangle}{|\partial_\theta c_0|^{\tfrac12}|\partial_\theta c_1|^{\tfrac12}} d\theta.}
\]
These results continue to hold on the space of Sobolev immersions $\mathcal{I}^r(\cir,\R^d)$, as long as the metric $G_c^q$ is defined on it.
\end{theorem}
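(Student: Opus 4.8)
\textbf{Proof strategy for Theorem~\ref{non-vanishing}.}

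The plan is to treat the three assertions separately, since they require genuinely different tools, and to organize the proof around the scaling behavior of the metric in the length $l_c$.

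\emph{Part (i), degeneracy for $q\le 1/2$.} The strategy is to adapt the Michor--Mumford collapsing argument for $q=0$ to the range $q\le 1/2$. I would fix two distinct curves and construct an explicit family of paths connecting them whose $G^q$-length tends to zero. The natural construction is a ``squeeze and translate'' move: first deform $c_0$ to a very small (low-diameter, hence small-length) curve, translate it across to sit near $c_1$, and then unsqueeze; the translation step can be made essentially free because for a curve of length $\ell$ the $G^q$-cost of a bounded pointwise motion scales like a positive power of $\ell$ when $q\le 1/2$. More precisely, for a translation $h$ (a constant vector field along $c$) one has $\norm{h}_{\dot G_c^q}=0$ and $\norm{h}_{G_c^0}^2 = l_c \abs{h}^2$, so the cost of translating a distance $O(1)$ in time $O(1)$ along a curve of length $\ell$ is $O(\sqrt\ell)$, which can be driven to zero by first shrinking. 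The shrink/unshrink steps must themselves be shown to have controllable cost; the point is that rescaling $c\mapsto \lambda c$ can be done in a way whose $G^q$-length, using \eqref{invariant versus non-invariant} and the explicit $l_c$-powers, is bounded uniformly (or even tends to zero) precisely when $2q\le 1$. This is the part that breaks at $q>1/2$, consistent with the threshold.

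\emph{Part (ii) and the diameter bound, non-degeneracy for $q>1/2$.} Here the plan is to produce a lower bound on length of any path. Let $c:[0,1]\to\Imm(\cir,\R^d)$ be a path from $c_0$ to $c_1$ with velocity field $\partial_t c = h(t)$. By the refined Sobolev embedding of Lemma~\ref{sobolev embedding}, for any $\ell\in(0,l_{c(t)}]$ we have $\norm{h(t)}_{L^\infty}\le C\max\{\ell^{-1/2},\ell^{q-1/2}\}\norm{h(t)}_{G_{c(t)}^q}$. The idea is to choose $\ell$ to be (a fixed fraction of) $\diam_{\max}$ rather than $l_{c(t)}$; this is the key trick the introduction advertises, circumventing the fact that $l_{c(t)}$ is not bounded below along the path. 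To justify $\ell \le l_{c(t)}$ we note that the length of a curve is at least (a constant times) its diameter, and that along a continuous path either the diameter stays comparable to $\diam_{\max}$ for the whole time, or it drops below, say, $\diam_{\max}/2$ at some point --- but then $c(t)$ already has diameter at least $\diam_{\max}/2$ at one endpoint and at most $\diam_{\max}/2$ somewhere, so the path must have moved some point a distance $\gtrsim\diam_{\max}$, and we can instead bound the $L^\infty$-displacement by the diameter directly. Splitting into these two cases and integrating $\int_0^1 \norm{h(t)}_{L^\infty}\,dt \ge \norm{c_1-c_0}_\infty$ (or $\ge$ a constant times $\diam_{\max}$ in the degenerate case) against the Cauchy--Schwarz bound $\big(\int_0^1\norm{h}_{G^q}\,dt\big)$ gives the stated inequality with the $\min$ over $\diam_{\max}$. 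Non-degeneracy (assertion (ii)) follows since $c_0\ne c_1$ forces either $\norm{c_0-c_1}_\infty>0$ or, if the images coincide but the parametrizations differ, a separate (easier) argument using that $q>1/2$ controls the tangent direction --- actually the cleanest route is to observe the bound already gives non-degeneracy whenever $\diam_{\max}>0$, which always holds for a non-constant curve, and $c_0\ne c_1$ with $\norm{c_0-c_1}_\infty=0$ is impossible.

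\emph{The $q\ge 1$ estimate via the square-root/SRVT transform.} For this I would invoke the $\dot G_c^1$ part of the metric: by \eqref{homogeneous versus full norm nesting}, $\norm{h}_{\dot G_c^1}\le \norm{h}_{G_c^q}$ for $q\ge 1$, so it suffices to prove the bound for the length in the $\dot G^1$-metric. The classical observation (Srivastava et al., Michor--Mumford) is that the map $c\mapsto q_c:=\partial_\theta c/\abs{\partial_\theta c}^{1/2}$ is, up to a constant factor, a Riemannian submersion-type isometry from $(\Imm,\dot G^1)$ onto an open subset of the flat $L^2(\cir,\R^d)$: one computes $\partial_t q_c = \abs{\partial_\theta c}^{1/2}\big(D_s h - \tfrac12\langle D_s h, v\rangle v\big)$ where $v=\partial_\theta c/\abs{\partial_\theta c}$, and $\norm{\partial_t q_c}_{L^2(d\theta)}^2 \le \norm{D_s h}_{L^2(ds)}^2 = \norm{h}_{\dot G_c^1}^2$ since projecting off the $v$-component only decreases the norm. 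Hence the $\dot G^1$-length of any path from $c_0$ to $c_1$ is at least the flat $L^2$-distance $\norm{q_{c_0}-q_{c_1}}_{L^2(d\theta)}$, which is exactly the claimed quantity; expanding the square and using $\int\abs{q_{c_i}}^2 d\theta = \int\abs{\partial_\theta c_i}\,d\theta = l_{c_i}$ gives the second displayed form. The transfer of all three statements to $\mathcal I^r(\cir,\R^d)$ is immediate because every path and every estimate used lives in, and only uses, $H^r$-regularity; density of smooth curves handles any approximation needed.

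\emph{Main obstacle.} I expect the technical heart to be Part (ii): making rigorous the dichotomy that lets one use $\ell=c\cdot\diam_{\max}$ in Lemma~\ref{sobolev embedding} even when $l_{c(t)}$ collapses along the path, and correctly tracking that the displacement of \emph{some} point is at least $\norm{c_0-c_1}_\infty$ (this requires that the sup in $\norm{c_0-c_1}_\infty$ is attained at a parameter value $\theta_0$, and bounding $\abs{c(1,\theta_0)-c(0,\theta_0)}\le\int_0^1\abs{h(t,\theta_0)}\,dt\le\int_0^1\norm{h(t)}_{L^\infty}\,dt$). The degeneracy construction in Part (i) also needs care to ensure each elementary move (shrink, translate, grow) has length bounded by something summable/vanishing, but its structure is dictated by the $q=0$ case and the scaling count is routine.
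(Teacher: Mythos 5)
Your part (ii) and your $q\ge 1$ estimate are essentially the paper's arguments: the paper runs the same dichotomy, only phrased through the length rather than the diameter (it sets $t_0=\max\{t: \diam(c_0)\le l_{c(s)}\ \text{for all } s\le t\}$, applies Lemma~\ref{sobolev embedding} with $\ell=\min\{1,\diam(c_0)\}$ on $[0,t_0]$, and invokes Lemma~\ref{diameter} at $t_0$ if $t_0<1$), and it obtains the $q\ge 1$ bound by citing the SRV-transform isometry together with \eqref{homogeneous versus full norm nesting}, which is exactly the pointwise computation you write out. These parts are sound.

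The genuine gap is part (i). Your squeeze--translate--unsqueeze construction does not produce arbitrarily short paths. For a scaling path $c(t)=\lambda(t)c_0$ one has, by \eqref{invariant versus non-invariant}, $\norm{\dot\lambda\, c_0}_{\dot G^q_{\lambda c_0}}=|\dot\lambda|\,(\lambda l_{c_0})^{\frac12-q}\norm{c_0\circ\psi_{c_0}^{-1}}_{\dot H^q}$, so the cost of shrinking from scale $1$ to scale $\epsilon$ is comparable to $l_{c_0}^{\frac12-q}\norm{c_0\circ\psi_{c_0}^{-1}}_{\dot H^q}\int_\epsilon^1\lambda^{\frac12-q}\,d\lambda$, which for every $q\le\frac12$ is bounded \emph{below} by a positive constant independent of $\epsilon$. ``Bounded uniformly'' is therefore not enough: zero geodesic distance needs the total length of the competitor paths to tend to zero, and the only step in your construction whose cost vanishes is the translation (whose $O(\sqrt\ell)$ cost comes from the $G^0$ part and is independent of $q$, so nothing in this scaling count produces the critical exponent $\tfrac12$). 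Worse, if the construction could be repaired so as to join arbitrary $c_0\neq c_1$ by arbitrarily short paths, it would prove that the distance vanishes identically for $0<q\le\frac12$, which the paper explicitly leaves open (full collapse is known only for $q=0$, via Michor--Mumford's oscillatory construction, which is not a shrink-and-translate argument). The paper's proof of (i) is entirely different and much softer: it takes $c_1=c_0\circ\varphi_1$ with $\varphi_1\neq\operatorname{id}$, pushes any path $\varphi(t)$ in $\Diff(\cir)$ forward to the path $c_0\circ\varphi(t)$, bounds its $G^q$-length by $C(l_{c_0},\norm{c_0}_{H^2})$ times the right-invariant $H^q$-length of $\varphi(t)$ using the product estimate \eqref{full norm product} (the length $l_{c(t)}$ is constant along such paths, which is what makes the constants uniform), and then quotes the vanishing of the right-invariant $H^q$ geodesic distance on $\Diff(\cir)$ for $q\le\frac12$. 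You would need to switch to an argument of this reparametrization type (or to a genuinely oscillatory construction) for part (i); as written, that part of your proposal fails.
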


\begin{remark}
For $q=0$ it has been shown in~\cite{michor2006riemannian}, that the geodesic distance vanishes identically on all of $\Imm(\mathbb{S}^1,\R^d)$. This implies, in particular, also the degeneracy of the induced geodesic distance on the shape space $B_i(\mathbb{S}^1,\R^d)$. Our result regarding the degeneracy for $q\leq \frac{1}{2}$ is significantly weaker: we only show the existence of distinct immersions, such that their geodesic distance is zero and we do not prove that this holds for arbitrary immersions. Furthermore, our examples are of the type $c_1=c_0\circ \varphi_1$ with $\varphi_1\in \operatorname{Diff}(\mathbb{S}^1)$. These elements are, however, identified in shape space $B_i(\mathbb{S}^1,\R^d)$. Thus this result does not resolve the degeneracy on the quotient space, but only for the space of immersions. We believe that the index $q=\frac12$ is also critical for the geodesic distance on  $B_i(\mathbb{S}^1,\R^d)$, but the necessary estimates seem quite challenging and we leave this question open for future research.  
\end{remark}

To prove Theorem~\ref{non-vanishing} we will first collect a useful estimate pertaining to the diameter of the initial curve $c_0$. 
\begin{lemma}\label{diameter}
    Let $c_0,c_1\in \Imm(\mathbb{S}^1,\R^d)$.
    If $l_{c_1}\le \diam(c_0)$, then $$\frac{1}{4}\diam(c_0) \leq \|c_0-c_1\|_{\infty}.$$
\end{lemma}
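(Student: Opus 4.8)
\textbf{Proof proposal for Lemma~\ref{diameter}.}

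The plan is to argue by contradiction, exploiting the fact that a curve of small length has small image-diameter. Suppose $\|c_0-c_1\|_\infty < \tfrac14 \diam(c_0)$. First I would record the elementary geometric fact that for any immersed curve $c$, the diameter of its image is bounded by half its length: $\diam(c) \le \tfrac12 l_c$. Indeed, given two points $c(\theta_1)$ and $c(\theta_2)$ on the image, the two arcs of $\cir$ joining $\theta_1$ to $\theta_2$ have lengths summing to $l_c$, so at least one of them has length $\le \tfrac12 l_c$; the image of that arc is a path from $c(\theta_1)$ to $c(\theta_2)$, hence $|c(\theta_1)-c(\theta_2)| \le \tfrac12 l_c$. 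Taking the supremum over $\theta_1,\theta_2$ gives $\diam(c_1) \le \tfrac12 l_{c_1} \le \tfrac12 \diam(c_0)$ under the hypothesis $l_{c_1} \le \diam(c_0)$.

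Next I would pick $\theta_1,\theta_2 \in \cir$ realizing (or nearly realizing) the diameter of $c_0$, so that $|c_0(\theta_1) - c_0(\theta_2)| = \diam(c_0)$ (the sup is attained by compactness of $\cir$ and continuity of $c_0$). Then by the triangle inequality,
\begin{align*}
\diam(c_0) &= |c_0(\theta_1)-c_0(\theta_2)| \\
&\le |c_0(\theta_1)-c_1(\theta_1)| + |c_1(\theta_1)-c_1(\theta_2)| + |c_1(\theta_2)-c_0(\theta_2)| \\
&\le \|c_0-c_1\|_\infty + \diam(c_1) + \|c_0-c_1\|_\infty \\
&< \tfrac14\diam(c_0) + \tfrac12\diam(c_0) + \tfrac14\diam(c_0) = \diam(c_0),
\end{align*}
a contradiction. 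Hence $\|c_0-c_1\|_\infty \ge \tfrac14\diam(c_0)$, as claimed.

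I do not anticipate a genuine obstacle here; the only point requiring a modicum of care is the bound $\diam(c) \le \tfrac12 l_c$, which must be stated for immersed (not necessarily embedded or constant-speed) closed curves — but the arc-length argument above applies verbatim to any Lipschitz, hence any smooth immersed, closed curve. One should also note the estimate is not tight (the constant could likely be improved), but $\tfrac14$ suffices and keeps the two-sided triangle inequality clean.
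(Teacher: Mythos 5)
Your proposal is correct and is essentially the paper's own argument: the same two ingredients (the bound $\diam(c)\le\tfrac12 l_c$ and the triangle inequality through diameter-realizing points of $c_0$) appear in the paper's proof, which simply rearranges $\diam(c_0)\le 2\|c_0-c_1\|_\infty+\tfrac12\diam(c_0)$ directly instead of phrasing it as a contradiction. No substantive difference.
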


\begin{proof}
    The diameter of any closed curve is at most half its length.
    Thus, by assumption,
    \[
    \diam(c_1) \le \frac{1}{2}\diam(c_0).
    \]
    Now, let $t,s\in \mathbb{S}^1$ such that $|c_0(t)-c_0(s)| = \diam(c_0)$, then
    \[
    \begin{split}
    \diam(c_0) &\le |c_0(t) - c_1(t)| + |c_1(t)-c_1(s)| + |c_1(s)-c_0(s)| \\
        &\le 2\|c_0-c_1\|_\infty + \diam(c_1) \le 2\|c_0-c_1\|_\infty + \frac{1}{2}\diam(c_0),
    \end{split}
    \]
    from which the claim follows.
\end{proof}

With this at hand, we now proceed to the proof of the main theorem for this section.
\begin{proof}[Proof of Theorem~\ref{non-vanishing}]
  We start with showing the non-degeneracy for $q>\frac12$. 
  Let $c(t)=c(t,\cdot)$ with $t\in[0,1]$ be any path between $c_0$ and $c_1$.
    Denote
    \[
    t_0 = \max\{t\in [0,1] ~:~ \diam(c_0) \leq l_{c(s)} \text{ for all } s \leq t\}.
    \]
    Note that $t_0>0$ since $l_{c_0} \ge 2 \diam(c_0)$.
    By definition, $l_{c(t)}\ge \diam(c_0)$ for all $t\in [0,t_0]$, and we can use \eqref{eq:Sob_embed} with $\ell=\min\{1,\diam(c_0)\}$ to obtain
    \[
    \|c_0-c(t_0)\|_{\infty} \le \int_0^{t_0} \|\partial_t c\|_\infty \,dt \le C\max\{\diam(c_0)^{-1/2},1\} \int_0^{t_0} \|\partial_t c\|_{G_c^q} \,dt.
    \]
    If $t_0=1$ we are done.
    Otherwise, $t_0<1$, and thus $\ell_{c(t_0)} = \diam(c_0)$.
    Therefore, by Lemma \ref{diameter}, we have
\begin{align}
    \frac{1}{4}\diam(c_0)& \le \|c_0-c(t_0)\|_{\infty} \\
    &\le C\max\{\diam(c_0)^{-1/2},1\}  \int_0^{t_0} \|\partial_t c\|_{G_c^q} \,dt\\
    &\le C\max\{\diam(c_0)^{-1/2},1\}  \int_0^{1} \|\partial_t c\|_{G_c^q} \,dt.
\end{align}
    As this estimate holds for any path connecting $c_0$ to $c_1$, it also holds for the infimum and thus we have obtained the desired bound for the geodesic distance.

    Next we prove the additional bound for $q\geq 1$. Therefore we first introduce the so-called SRV transform~\cite{srivastava2010shape}, which is given as the mapping
    \begin{equation}
     c\mapsto \frac{c_\theta}{|c_\theta|^{\tfrac12}}.
    \end{equation}
    It has been shown that the SRV transform~\cite{srivastava2010shape} is a Riemannian isometry from $\Imm(\cir,\mathbb R^d)$ equipped with the $H^1$-type metric 
    \begin{align}
        &G^{\operatorname{SRV}}_c(h,h)=\int_{\cir} \langle D_sh^\top,D_sk^{\top}\rangle +\frac14\langle D_sh^\bot,D_sk^{\bot}\rangle ds\\
        &\text{with }D_sh^\top=\langle D_s h, D_s c \rangle D_sh,\text{ and }  D_sh^\bot=D_sh-D_sh^\top
    \end{align}
    to a submanifold of the space of all smooth functions $C^{\infty}(\cir,\mathbb R^d)$ equipped with the flat $L^2$ (Riemannian) metric. It is easy to see that the $H^1$-metric $G_c^1$ is lower bounded by the SRV metric $G^{\operatorname{SRV}}$ and thus  the same is true for their geodesic distances. Finally we note, that the geodesic distance of a submanifold is bounded by the geodesic distance on the surrounding space and that the geodesic distance of the flat $L^2$-metric is simply given by the $L^2$-difference of the functions. Combining these observations leads to the desired lower bound. 
    
    It remains to prove the degeneracy for $q\leq \frac12$. To this end, consider any $c_0\in\Imm(\mathbb{S}^1,\mathbb R^d)$ and let $c_1=c_0\circ\varphi_1$ for some fixed reparametrization $\operatorname{id}\neq\varphi_1\in\operatorname{Diff}(\cir)$. We aim to show that the geodesic distance between $c_0$ and $c_1$ is zero. For the sake of simplicity we assume that $c_0$ is parametrized by arc length.
    We now consider  any path $\varphi:[0,1]\mapsto \diff(\cir)$ connecting $\operatorname{id}$ to $\varphi_1$. Then $c(t)=c_0(\varphi(t))$ is a path in the manifold of immersions that connects $c_0$ to $c_1$. From this we have 
    \begin{align}
        \dist_{G_c^q}(c_0,c_1)&\leq \int_0^{1} \|\partial_{t} c \|_{G_c^q} \,dt \\
        &\leq \int_0^{1}\|\left(\partial_{\theta} c_0 \circ \varphi \right)  \partial_t \varphi \|_{G_c^q} \,dt \\
        &= \int_0^{1}\| \partial_{\theta} c_0  \left(\partial_t \varphi \circ \varphi^{-1} \right) \|_{G_c^q} \,dt \\
        & \leq \int_0^{1} C(l_{c}) \|\partial_{\theta} c_0  \left(\partial_t \varphi \circ \varphi^{-1} \right)\|_{H^q} \,dt,
        \end{align}
    where we used Lemma~\ref{relating homogeneous norms} for the expression of the $G^q$ metric in the last step.
    Since the length of $c(t)$ is constant in time, i.e., $l_{c_0}=l_{c(t)}$ we can bound this via
    \begin{align}
        \dist_{G_c^q}(c_0,c_1)&\leq C(l_{c_0})
        \int_0^{1} \|\partial_{\theta} c_0  \left(\partial_t \varphi \circ \varphi^{-1} \right)\|_{H^q} \,dt\\&\leq
        C(l_{c_0})\int_0^{1} \|\partial_{\theta} c_0\|_{H^1} \|  \partial_t \varphi_t\circ\varphi_t^{-1}\|_{H^q} \,dt\\&=\tilde C(l_{c_0}, \|c_0\|_{H^2})\int_0^{1} \|  \partial_t \varphi_t\circ\varphi_t^{-1}\|_{H^q} \,dt\,,
    \end{align}
    where we used \eqref{full norm product} with $a=q$ and $b=1$ in the last inequality.
    Note that the norm on the right hand side is exactly the right invariant $H^q$-norm on $\operatorname{Diff}(\cir)$. Since this inequality holds for any  path connecting $\operatorname{id}$ to $\varphi_1$ in $\operatorname{Diff}(\cir)$ this implies
    \begin{align}
    \dist_{G_c^q}(c_0,c_1)&\leq C(c_0)\dist^{\operatorname{Diff}}_{H^q}(\operatorname{id},\varphi_1)
    \end{align}
    and thus we obtain the desired result since the geodesic distance of the right invariant $H^q$-metric on $\operatorname{Diff}(\cir)$ vanishes for every $q\leq\frac12$, cf.~\cite{bauer2013geodesic}. 
\end{proof}

 \section{Completeness Properties}\label{sec:completeness}
This section concerns the second central goal of this paper: the extension of the results of Bruveris, Michor and Mumford~\cite{bruveris2014geodesic,bruveris2015completeness} on   completeness of integer order Sobolev metrics to fractional orders. 
The main result of this section is the following:

\begin{theorem}\label{main theorem}
    If $r>\frac{3}{2}$ then
    \begin{enumerate}
        \item The space $\mathcal{I}^r(\mathbb{S}^1,\R^d)$ equipped with the geodesic distance $\dist_{G_c^r}$ induced by the metric $G_c^r$ given in \eqref{main metric} is metrically complete.
         \item The Riemannian manifold $\left(\mathcal{I}^r(\mathbb{S}^1,\R^d),G_c^r\right)$ is geodesically convex, i.e., for any pair of curves $c_1$ and $c_2\in\mathcal{I}^r(\mathbb{S}^1,\R^d)$ there exists a minimizing geodesic connecting them.
        \item If $\frac{3}{2} < q \leq r$, then the Riemannian manifold $\left(\mathcal{I}^r(\mathbb{S}^1, \R^d), G_c^q\right)$ is geodesically complete. This result continues to holds for $r=\infty$, i.e., the Fr\'echet manifold $\Imm(\cir,\R^d)$ equipped with the $G_c^q$-metric is geodesically complete.  
        \item If $\frac12<q<r$, then the space $\mathcal{I}^r(\mathbb{S}^1,\R^d)$ equipped with the geodesic distance $\dist_{G_c^q}$ induced by the metric $G_c^q$ given in \eqref{main metric} is metrically incomplete. If $q>\frac32$ then the corresponding metric completion is exactly $\mathcal{I}^q(\cir, \R^d)$. 
        \item If $q<\frac{3}{2}$ then the Riemannian manifold $\left(\mathcal{I}^r(\mathbb{S}^1, \R^d), G_c^q\right)$ is geodesically incomplete.
    \end{enumerate}
\end{theorem}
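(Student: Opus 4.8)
The whole theorem hinges on part (1); parts (2)--(5) then follow by comparatively soft arguments, so the plan is to concentrate on (1) and sketch how the rest is built on top of it. For (1), the strategy --- following the template of \cite{bruveris2015completeness} but now with fractional-order estimates --- is: first establish a priori bounds, valid on $\dist_{G_c^r}$-balls, for the geometric quantities $l_c$, $l_c^{-1}$, $\|c\|_{C^1}$, $(\min_\theta|\partial_\theta c|)^{-1}$ and $\|c\|_{H^r}$ attached to a curve $c\in\mathcal{I}^r$; then deduce that every $\dist_{G_c^r}$-Cauchy sequence converges in $H^r$ to an immersion. Establishing these a priori bounds is the real work, and I expect it to be the main obstacle.

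\emph{A priori estimates.} Fix a $C^1$ path $c(\cdot):[0,1]\to\mathcal{I}^r$. For the length, $\partial_t l_c=\int_{\cir}\langle D_s\partial_t c,\partial_\theta c/|\partial_\theta c|\rangle\,ds$ gives $|\partial_t\sqrt{l_c}|\le\tfrac12\|\partial_t c\|_{\dot{G}_c^1}\le\tfrac12\|\partial_t c\|_{G_c^r}$ by \eqref{homogeneous versus full norm nesting}, so $\sqrt{l_c}$ is $\tfrac12$-Lipschitz for $\dist_{G_c^r}$, bounding $l_c$ above everywhere and below on balls of radius $<2\sqrt{l_{c(0)}}$. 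For pointwise control of $|\partial_\theta c|$ one uses $\partial_t\log|\partial_\theta c|=\langle D_s\partial_t c,\partial_\theta c/|\partial_\theta c|\rangle$ together with the length-weighted embedding of Lemma~\ref{sobolev embedding} applied to $D_s\partial_t c$, noting $\|D_s\partial_t c\|_{\dot{G}_c^{q-1}}=\|\partial_t c\|_{\dot{G}_c^q}$ and $q-1>\tfrac12$ --- this is exactly where $q=r>\tfrac32$ enters; similarly $\|c(t)\|_{C^1}$ is controlled via Lemma~\ref{sobolev embedding} applied to $\partial_t c$ and integration in $t$. The top-order bound is a Gr\"onwall estimate: from $\tfrac{d}{dt}\|c\|_{H^r}^2=2\langle c,\partial_t c\rangle_{H^r}$ one gets $\tfrac{d}{dt}\|c\|_{H^r}\le\|\partial_t c\|_{H^r}$, and using the change of variables \eqref{invariant versus non-invariant} and tame product and composition estimates for fractional Sobolev norms (in the spirit of Lemma~\ref{lemma:com-pro} and the appendix) one bounds $\|\partial_t c\|_{H^r}$ by $P\big(l_c,l_c^{-1},\|c\|_{C^1},(\min_\theta|\partial_\theta c|)^{-1}\big)\,(1+\|c\|_{H^r})\,\|\partial_t c\|_{G_c^r}$; the crucial point is that the dependence on $\|c\|_{H^r}$ --- which enters only through $\psi_c$, whose homogeneous $H^r$-seminorm is controlled by $\|c\|_{H^r}/l_c$ --- is linear, so Gr\"onwall combined with the bounds already obtained closes the estimate. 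Carrying out these fractional-order tame estimates cleanly is the heart of the matter, and the reason for the preparatory Lemma~\ref{lemma:com-pro}.

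\emph{From the a priori bounds to (1)--(3).} A $\dist_{G_c^r}$-Cauchy sequence $(c_n)$ eventually lies in a set $K=\{\|c\|_{H^r}\le M,\ \min_\theta|\partial_\theta c|\ge\delta\}$ on which $\dist_{G_c^r}$ controls the $L^2$-distance (along any path in $K$, $\|\partial_t c\|_{L^2(d\theta)}\le\delta^{-1/2}\|\partial_t c\|_{G_c^0}\le\delta^{-1/2}\|\partial_t c\|_{G_c^r}$), so $(c_n)$ converges in $L^2$, hence --- being $H^r$-bounded, with $H^r$ embedding compactly in $C^1$ --- in $C^1$, to a limit $c_\infty$ with $\min_\theta|\partial_\theta c_\infty|\ge\delta$, so $c_\infty\in\mathcal{I}^r$; one then checks $c_n\to c_\infty$ for $\dist_{G_c^r}$ as well, proving (1). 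Since $G_c^r$ is a strong metric on $\mathcal{I}^r$ whose geodesic equation is locally well-posed there (cf.\ \cite{bauer2018fractional}), (1) and the only part of the Hopf--Rinow theorem that survives in infinite dimensions (metric completeness plus strongness imply geodesic completeness; see \cite{bruveris2015completeness}) give geodesic completeness for $q=r$. For $\tfrac32<q<r$, a $G_c^q$-geodesic in $\mathcal{I}^r$ is also a $G_c^q$-geodesic in $\mathcal{I}^q$, which exists for all time by the case just proved (applied with $r$ replaced by $q$, legitimate since $q>\tfrac32$); an Ebin--Marsden-type no-loss-no-gain argument \cite{ebin1970}, using the a priori control of the lower-order quantities along the already-global $\mathcal{I}^q$-solution, shows the geodesic stays in $\mathcal{I}^r$ (and, for $r=\infty$, stays smooth) for all time, proving (3). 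For (2): a $\dist_{G_c^r}$-minimizing sequence of paths between two curves lies, after reparametrization by a constant multiple of $\dist_{G_c^r}$-arclength, in a set like $K$ and is equi-Lipschitz in $L^2$, so Arzel\`a--Ascoli produces a limit path (weakly in $H^r$, uniformly in $C^1$), and weak lower semicontinuity of $\gamma\mapsto\int_0^1\|\partial_t\gamma\|_{G_\gamma^r}\,dt$ shows it is length-minimizing, hence a minimizing geodesic.

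\emph{Incompleteness.} If $q>\tfrac32$ then $G_c^q$ is strong on $\mathcal{I}^q$, so by (1) applied with $r=q$ the space $(\mathcal{I}^q,\dist_{G_c^q})$ is complete and $\dist_{G_c^q}$ induces the $H^q$-topology on it; as $\mathcal{I}^r$ is $H^q$-dense in $\mathcal{I}^q$ (mollification) and $q<r$, taking $c_\infty\in\mathcal{I}^q\setminus\mathcal{I}^r$ and $c_n\in\mathcal{I}^r$ with $c_n\to c_\infty$ in $H^q$ yields a $\dist_{G_c^q}$-Cauchy sequence with no limit in $\mathcal{I}^r$, so $(\mathcal{I}^r,\dist_{G_c^q})$ is incomplete with metric completion $\mathcal{I}^q$. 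For $q<\tfrac32$ --- which also handles (5) and the remaining range of (4) --- I would use the concentric round circles $c_\rho(\theta)=\rho(\cos2\pi\theta,\sin2\pi\theta,0,\dots,0)$: the family of such circles is the fixed-point set of a rotation action, so by the principle of symmetric criticality the corresponding path is, after reparametrization, a $G_c^q$-geodesic, and $\|\partial_t c_\rho\|_{G_{c_\rho}^q}^2=|\rho'|^2\big(2\pi\rho+C(2\pi\rho)^{1-2q}\big)$, whence the path from $\rho=1$ to $\rho=0$ has length $\int_0^1\sqrt{2\pi\rho+C(2\pi\rho)^{1-2q}}\,d\rho$, which is finite precisely when $q<\tfrac32$. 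A constant-speed geodesic of finite length cannot be extended to all of $\R$, which gives the geodesic incompleteness in (5), and the same family is $\dist_{G_c^q}$-Cauchy with no immersed limit, giving the metric incompleteness in (4) for $q<\tfrac32$.
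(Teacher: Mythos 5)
Your overall architecture is the same as the paper's: Gr\"onwall-type bounds for $l_c^{\pm1}$, $|c_\theta|^{\pm1}$ and a top-order tame estimate on $G^r$-metric balls for (1), the direct method for (2), strongness of the metric plus a no-loss-no-gain argument for (3), mollification/density for the $q>\tfrac32$ part of (4), and shrinking concentric circles as a totally geodesic family for (5). Two steps, however, do not go through as written. First, your final assembly of (1) is gapped: from ``$\dist_{G_c^r}$-Cauchy $\Rightarrow$ $L^2$-Cauchy, $H^r$-bounded, hence $C^1$-convergent with a weak $H^r$ limit'' you cannot conclude that ``$c_n\to c_\infty$ for $\dist_{G_c^r}$ as well''. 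Since $G^r$ is a strong metric, $\dist_{G_c^r}$ induces the $H^r$-topology locally, so a weakly convergent, $\dist$-Cauchy sequence need not approach its weak limit in the geodesic distance; what is needed is \emph{strong} $H^r$-Cauchyness. This is exactly what the paper extracts: the two-sided equivalence $\norm{h}_{H^r}\simeq\norm{h}_{G_c^r}$ on metric balls (Lemma~\ref{lemma 1}), integrated along near-minimizing paths to give $\norm{c_1-c_2}_{H^r}\lesssim\dist_{G_c^r}(c_1,c_2)$ on balls (Lemma~\ref{lemma 2}), whence the sequence is $H^r$-Cauchy; only then does one invoke that the strong metric's distance induces the manifold topology to get $\dist$-convergence to the $H^r$-limit. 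Your own tame estimate $\norm{\partial_t c}_{H^r}\lesssim P(\cdot)\left(1+\norm{c}_{H^r}\right)\norm{\partial_t c}_{G_c^r}$, combined with your Gr\"onwall bound on $\norm{c}_{H^r}$, contains the needed ingredient, but you must route the conclusion through this path-length comparison rather than through compactness; the compactness route by itself fails.

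Second, your proof of (4) misses the case $q=\tfrac32$ (allowed in the statement whenever $r>\tfrac32$): the shrinking-circle path has infinite $G^{3/2}$-length, and your density argument requires completeness of $(\mathcal{I}^q,\dist_{G^q})$, hence $q>\tfrac32$. The paper covers the whole range $\tfrac12<q<r$ at once by fixing an intermediate exponent $r'\in(\tfrac32,r)$ with $r'\ge q$, picking $c_0\in\mathcal{I}^{r'}\setminus\mathcal{I}^r$, and mollifying: the resulting path has finite $G^{r'}$-length, hence (by \eqref{invariant nesting} and the uniform length bound) finite $G^q$-length, and leaves $\mathcal{I}^r$ at $t=0$; the same construction with $r'=q$ then gives the density needed to identify the completion with $\mathcal{I}^q$ when $q>\tfrac32$. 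You should adopt this intermediate-exponent device (or provide another argument) to close the $q=\tfrac32$ case. The remaining parts (2), (3), (5) match the paper's proofs up to inessential variations (symmetric criticality versus the direct verification that the circle family is totally geodesic, etc.).
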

Note that fourth claim only discusses the case $q>1/2$ since, by Theorem~\ref{non-vanishing}, for $q\le 1/2$, the geodesic distance does not induce a metric space structure at all.

The bulk of the work in establishing this theorem lies in proving the first result. 
The second result then follows from an analogous argument to the integer order case. 
The proof of the third claim follows from the first by an Ebin-Marsden-type no-loss no-gain result. 
The fourth part is shown mainly via some rather soft arguments and finally the fifth is proven using an explicit example.

The proof of metric completeness hinges on the following Lemma, which establishes an equivalence of the $r$th-order invariant and non-invariant norms on $G_c^r$-metric balls.
\begin{lemma}\label{lemma 1}
    If $r>\frac{3}{2}$, then, for any $G_c^r$-metric ball $B_{G_c^r}(c_0, \rho)$ in $\mathcal{I}^r$, there exists a constant $\alpha(r,c_0,\rho)>0$ such that, for all $c \in B_{G_c^r}(c_0,\rho)$ and all $h \in T_c\mathcal{I}^r$, we have
    \begin{equation}\label{equivalence}
        \alpha^{-1}\norm{h}_{H^r} \leq \norm{h}_{G_c^r} \leq \alpha \norm{h}_{H^r}.
    \end{equation}
\end{lemma}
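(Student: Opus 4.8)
The plan is to establish the two inequalities in \eqref{equivalence} by reducing everything to the standard Sobolev norm and then controlling the relevant geometric quantities --- the length $l_c$ and the constant-speed reparametrization $\psi_c$ --- uniformly over a $G_c^r$-metric ball. The key point is that, by Lemma~\ref{relating homogeneous norms}, we have $\norm{h}_{\dot G_c^q}^2 = l_c^{1-2q}\norm{h\circ\psi_c^{-1}}_{\dot H^q}^2$, and by definition $\norm{h}_{G_c^0}^2 = l_c\norm{h}_{L^2}^2 = l_c\norm{h\circ\psi_c^{-1}}_{L^2}^2$. Hence $\norm{h}_{G_c^r}^2 = l_c\norm{h\circ\psi_c^{-1}}_{L^2}^2 + l_c^{1-2r}\norm{h\circ\psi_c^{-1}}_{\dot H^r}^2$, which is comparable to $\norm{h\circ\psi_c^{-1}}_{H^r}^2$ up to constants depending only on $l_c$ (bounded above and below). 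So the whole lemma comes down to: (a) $l_c$ is bounded above and below by positive constants on $B_{G_c^r}(c_0,\rho)$; and (b) the composition operators $f\mapsto f\circ\psi_c$ and $f\mapsto f\circ\psi_c^{-1}$ are bounded on $H^r$, with operator norm controlled uniformly over the ball --- equivalently, $\psi_c$ and $\psi_c^{-1}$ stay in a bounded subset of $\D^r(\cir)$, bounded away from the boundary.

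First I would handle the length. The length functional $c\mapsto l_c$ has differential $h\mapsto \int_{\cir}\langle D_sc, D_sh\rangle\,ds$, whose absolute value is at most $\norm{h}_{L^1(ds)}\le l_c^{1/2}\norm{h}_{G_c^0}\le l_c^{1/2}\norm{h}_{G_c^r}$; equivalently $|\partial_t \sqrt{l_{c(t)}}| \lesssim \norm{\partial_t c}_{G_c^r}$, so along any path $\sqrt{l_{c(t)}}$ is $\tfrac12$-Lipschitz in the $G_c^r$-arclength parameter. Taking the infimum over paths, $|\sqrt{l_c} - \sqrt{l_{c_0}}| \le \tfrac12\dist_{G_c^r}(c_0,c) < \tfrac12\rho$ for $c\in B_{G_c^r}(c_0,\rho)$, which gives the two-sided bound $0 < \underline l(r,c_0,\rho) \le l_c \le \overline l(r,c_0,\rho)$.

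Next, for the control on $\psi_c$, the natural route is to bound $\norm{c}_{H^r}$ and $\norm{1/|c_\theta|}_{L^\infty}$ (a lower bound on $|c_\theta|$) uniformly on the ball, since $\psi_c$ is built explicitly from $|c_\theta|$ and $l_c$ by $\psi_c(\theta) = l_c^{-1}\int_0^\theta|c_\theta|$, and then the $H^r$-regularity and uniform non-degeneracy of $\psi_c$ (hence of $\psi_c^{-1}$ via the inverse function theorem, as $r>3/2$) follow from the composition/product estimates in Lemma~\ref{lemma:com-pro} together with $\norm{c}_{H^r}\lesssim 1$. To bound $\norm{c}_{H^r}$ and $|c_\theta|$ from below: using $\norm{\partial_t c}_{H^r}\lesssim \norm{\partial_t c}_{G_c^r}$ (one direction of the very inequality we are proving, but which is easier to get crudely once $l_c$ and $\psi_c$ are controlled --- so this must be organized carefully, e.g. via a continuity/bootstrap argument along the path, or by first proving a weaker non-uniform estimate and upgrading it), one gets $\norm{c(t)}_{H^r}$ Lipschitz along paths in the $G_c^r$-length, hence bounded on the ball; and a lower bound on $|c_\theta|$ follows similarly since $|c_\theta|$ is controlled in $C^0$ by $\norm{c}_{H^r}$ (as $r-1>1/2$) and $\partial_t|c_\theta| = \langle \partial_\theta c/|c_\theta|, \partial_\theta\partial_t c\rangle$ is controlled. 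The main obstacle is precisely this circularity: the uniform bound $\norm{h}_{H^r}\lesssim\norm{h}_{G_c^r}$ and the uniform control of $\psi_c$ feed into each other, so the argument has to be set up as a bootstrap --- first establishing that the set of $c$ along a given path for which all the desired bounds hold with fixed constants is open and closed, using the length bound and Lemma~\ref{sobolev embedding} as the unconditional inputs. Once $l_c$, $\norm{c}_{H^r}$, $\min|c_\theta|$, and hence $\norm{\psi_c^{\pm1}}_{H^r}$ and $\norm{(\psi_c^{\pm1})_\theta}_{L^\infty}^{\pm1}$ are all bounded on the ball, the two inequalities in \eqref{equivalence} follow from the displayed identity for $\norm{h}_{G_c^r}^2$ above combined with the boundedness of $f\mapsto f\circ\psi_c^{\pm1}$ on $H^r$, finishing the proof.
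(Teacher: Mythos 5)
Your high-level strategy (control $l_c$ and the parametrization data uniformly on the ball, then compare norms) is the right one in spirit, but two of the steps you rely on are precisely where the real work lies, and as written they do not go through. First, the length bound: your argument that $\sqrt{l_{c(t)}}$ is $\tfrac12$-Lipschitz in $G_c^r$-arclength only yields the \emph{upper} bound $\sqrt{l_c}\le\sqrt{l_{c_0}}+\rho/2$; it gives $\sqrt{l_c}\ge\sqrt{l_{c_0}}-\rho/2$, which is vacuous once $\rho$ is large, so the claimed two-sided bound $0<\underline{l}\le l_c$ is not established. This is not a cosmetic issue: the lower bound on $l_c$ is exactly where the hypothesis $r>\tfrac32$ must enter (for $q<\tfrac32$ the shrinking-circle path has finite $G^q$-length with $l_c\to0$, so no order-independent argument can produce it). The paper obtains it by applying the Gronwall-type Lemma~\ref{gronwall lemma} to the functional $c\mapsto l_c^{-1}$, where the derivative estimate $\abs{D_{c,h}(l_c^{-1})}\le l_c^{-1}\norm{h}_{G_c^r}$ in the regime $l_c\le1$ is extracted via \eqref{invariant versus non-invariant}, \eqref{non-invariant nesting} and the factor $l_c^{\,r-3/2}\le1$, i.e.\ it genuinely uses $r>\tfrac32$.

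Second, you correctly identify the circularity between the uniform bound $\norm{h}_{H^r}\lesssim\norm{h}_{G_c^r}$ and the uniform control of $\psi_c$ (equivalently of $\abs{c_\theta}^{\pm1}$ in $H^{r-1}$), but you leave its resolution as a sketch (``bootstrap / open--closed along the path''). That resolution is the substance of the lemma, and the paper does it differently: there is no bootstrap, but a hierarchy of applications of Lemma~\ref{gronwall lemma} to carefully chosen functionals --- first $l_c^{\pm1}$ and $\log\abs{c_\theta}$ in $L^\infty$ (Lemma~\ref{length and sup locally bounded}), then the low-order norm equivalence $\norm{\cdot}_{\dot G_c^p}\simeq\norm{\cdot}_{\dot H^p}$ for $p\le1$ (Lemma~\ref{mini lemma 1}, via the composition estimate \eqref{composition}, which is only available for orders $\le1$), then $D_sc$ and $\abs{c_\theta}^{\pm1}$ in $\dot H^{\tilde r}$, $\tilde r=\min\{r-1,1\}$ (Lemma~\ref{baseline}), and for $r>2$ an induction on derivatives (Lemma~\ref{workhorse}) --- where each derivative estimate is of the form $\norm{D_{c,h}f}\lesssim(1+\norm{f(c)})\norm{h}_{G_c^r}$ using only unconditional inequalities (\eqref{invariant nesting}, \eqref{homogeneous versus full norm nesting}, Lemma~\ref{sobolev embedding}) and quantities bounded at earlier stages; the final equivalence is then proved directly through the product estimate \eqref{product} applied to $\abs{c_\theta}^{\mp1}h_\theta$ (inductively, with $D_s^k$), never needing a bound on the composition operator $f\mapsto f\circ\psi_c^{\pm1}$ on $H^r$ itself. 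Your route through operator bounds for $R_{\psi_c^{\pm1}}$ on $H^r$ would additionally require a quantitative composition estimate in fractional $H^r$ for $r>\tfrac32$, which the paper does not provide (Lemma~\ref{lemma:com-pro}(v) covers only $0\le a\le1$). So as it stands the proposal has a genuine gap at the lower length bound and leaves the central uniform-control argument unproved.
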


To establish this result we first recall a useful lemma for establishing boundedness on $G_c^r$-metric balls, cf. \cite[Lemma~3.2]{bruveris2015completeness}.
\begin{lemma}\label{gronwall lemma}
    Let $(X, \norm{\cdot}_X)$ be a normed space with $f: (\mathcal{I}^r, \dist_{G_c^r}) \rightarrow (X, \norm{\cdot}_X)$ a $C^1$ function. For any $c \in \mathcal{I}^r$ and $h \in T_c\mathcal{I}^r$ we denote the derivative of $f$ at $c$ in the direction $h$ by
    \begin{equation}\label{directional derivative}
        D_{c,h} (f) = \frac{d}{dt}\bigg\vert_{t=0} f(\sigma(t)),
    \end{equation}
    where $\sigma:(-\varepsilon,\varepsilon) \rightarrow \mathcal{I}^r$ is a $C^1$ path with $\sigma(0)=c$ and $\dot{\sigma}(0)=h$.
    
    Assume that for {every metric ball} $B_{G_c^r}(c_0,\rho)$ there exists a constant $\beta(r,c_0,\rho)>0$ such that
    \begin{equation}\label{linear gronwall estimate}
        \norm{D_{c,h} (f)}_X \leq \beta (1 + \norm{f(c)}_X) \norm{h}_{G_c^r} ,
    \end{equation}
    for all $c \in B_{G_c^r}(c_0,\rho)$ and $h \in T_c\mathcal{I}^r$. Then $f$ is Lipschitz continuous, and, in particular, bounded on every $G_c^r$-metric ball $B_{G_c^r}(c_0, \rho)$ in $\mathcal{I}^r$.
\end{lemma}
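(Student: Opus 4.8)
The plan is to run a Grönwall-type argument along paths, exploiting that the metric $\dist_{G_c^r}$ is itself defined as an infimal length. Fix a metric ball $B_{G_c^r}(c_0,\rho)$ and let $\beta = \beta(r,c_0,\rho)$ be the constant from \eqref{linear gronwall estimate}. First I would observe that it suffices to bound $\norm{f(c)}_X$ for $c$ in the ball and to establish the Lipschitz estimate for pairs $c_1,c_2$ in the ball. Given such a pair and any $\varepsilon>0$, choose a $C^1$ path $\sigma\colon[0,1]\to\mathcal{I}^r$ from $c_1$ to $c_2$ with $G^r_c$-length at most $\dist_{G_c^r}(c_1,c_2)+\varepsilon$; by shrinking $\varepsilon$ we may also assume the path stays inside $B_{G_c^r}(c_0,\rho)$ (this is possible since $\dist_{G_c^r}(c_1,c_2) + \varepsilon$ can be taken smaller than $2\rho$ minus the distances of $c_1,c_2$ to $c_0$, and any near-minimizing path has all points within that distance of the endpoints). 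Set $g(t) = \norm{f(\sigma(t))}_X$.

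Next I would differentiate (or rather, estimate the upper Dini derivative of) $g$. Since $f$ is $C^1$ and $\norm{\cdot}_X$ is a norm, $g$ is locally Lipschitz in $t$ and for a.e. $t$ we have $\bigl|\tfrac{d}{dt} g(t)\bigr| \le \norm{D_{\sigma(t),\dot\sigma(t)}(f)}_X$. Applying the hypothesis \eqref{linear gronwall estimate} at the point $\sigma(t)\in B_{G_c^r}(c_0,\rho)$ with $h = \dot\sigma(t)$ gives
\[
\Bigl|\tfrac{d}{dt} g(t)\Bigr| \le \beta\bigl(1 + g(t)\bigr)\,\norm{\dot\sigma(t)}_{G_{\sigma(t)}^r}.
\]
Dividing by $1+g(t)$ and integrating from $0$ to $1$ yields
\[
\log\frac{1+g(1)}{1+g(0)} \le \beta \int_0^1 \norm{\dot\sigma(t)}_{G_{\sigma(t)}^r}\,dt \le \beta\bigl(\dist_{G_c^r}(c_1,c_2)+\varepsilon\bigr).
\]
Letting $\varepsilon\to 0$ and exponentiating gives the two-sided bound $1+\norm{f(c_1)}_X \le e^{\beta\, \dist(c_1,c_2)}\bigl(1+\norm{f(c_2)}_X\bigr)$ and symmetrically. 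Taking $c_2 = c_0$ (and using that the ball has radius $\rho$) shows $\norm{f(c)}_X \le (1+\norm{f(c_0)}_X)e^{\beta\rho} - 1$ for all $c$ in the ball, i.e.\ $f$ is bounded on the ball; denote this bound by $M$.

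For the Lipschitz claim I would feed this uniform bound back into the estimate: along any near-minimizing path inside the ball, $\norm{D_{\sigma(t),\dot\sigma(t)}(f)}_X \le \beta(1+M)\norm{\dot\sigma(t)}_{G^r}$, so
\[
\norm{f(c_1) - f(c_2)}_X \le \int_0^1 \norm{D_{\sigma(t),\dot\sigma(t)}(f)}_X\,dt \le \beta(1+M)\bigl(\dist_{G_c^r}(c_1,c_2)+\varepsilon\bigr),
\]
and $\varepsilon\to0$ gives Lipschitz continuity with constant $\beta(1+M)$ on the ball. The main technical point — the only place one must be slightly careful — is the localization: ensuring that near-minimizing paths between two points of the ball can be taken to remain inside a fixed ball (possibly slightly enlarged, say $B_{G_c^r}(c_0,2\rho)$, which does no harm since the statement is "for every metric ball"), so that the hypothesis \eqref{linear gronwall estimate} with a single constant applies all along the path. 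Everything else is a routine Grönwall/integration-along-paths computation.
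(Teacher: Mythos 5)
Your argument is correct and is essentially the standard Gr\"onwall-along-near-minimizing-paths proof: the paper itself does not reprove this lemma but cites it from the earlier completeness paper of Bruveris, and your reconstruction matches that argument (bound the derivative of $t\mapsto \norm{f(\sigma(t))}_X$, divide by $1+\norm{f}_X$, integrate to get an exponential bound, then feed the resulting uniform bound back in for the Lipschitz estimate). One small correction on the localization step you flag: a near-minimizing path between two points of $B_{G_c^r}(c_0,\rho)$ is only guaranteed to remain in $B_{G_c^r}(c_0,4\rho)$, since $\dist_{G_c^r}(\sigma(t),c_0)\le \dist_{G_c^r}(c_1,c_0)+L_{G_c^r}(\sigma)\le \rho+2\rho+\varepsilon$ (this is exactly the enlargement used later in Lemma~\ref{lemma 2}), so $2\rho$ is not enough in general --- but, as you note, this is harmless: since the hypothesis \eqref{linear gronwall estimate} is assumed on every metric ball, one simply runs your argument with the constant $\beta(r,c_0,4\rho)$.
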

The above lemma also holds for vector spaces equipped with semi-norms. In particular we may consider mappings such as $f: (\mathcal{I}^r, \dist_{G_c^r}) \rightarrow (H^p, \norm{\cdot}_{\dot{H}^p})$.
    
Throughout this section, for notational simplicity, we will {denote $A \lesssim B$ when there exists a constant $c>0$, depending on $r$, $c_0$ and $\rho$, such that $A\le cB$ on a $G_c^r$-metric ball $B_{G_c^r}(c_0, \rho)$ in $\mathcal{I}^r$.
Similarly, we will write $A\simeq B$ if $A\lesssim B$ and $B\lesssim A$.}

\begin{lemma}\label{length and sup locally bounded} If $r>\frac{3}{2}$, then the following functions are bounded on every $G_c^{r}$-metric ball $B_{G_c^{r}}(c_0, \rho)$ in $\mathcal{I}^r$ by a constant depending only on $r$, $c_0$ and $\rho$
    \begin{align*}
        &(\mathcal{I}^r, \dist_{G_c^r}) \rightarrow (\R, \abs{\cdot}) \ ; \ c \mapsto l_c \ , \\
        &(\mathcal{I}^r, \dist_{G_c^r}) \rightarrow (\R, \abs{\cdot}) \ ; \ c \mapsto l_c^{-1} \ ,\\
        &(\mathcal{I}^r, \dist_{G_c^r}) \rightarrow (L^\infty(\cir, \R), \norm{\cdot}_{L^\infty}) \ ; \ c \mapsto \abs{c_\theta} \ ,\\
        &(\mathcal{I}^r, \dist_{G_c^r}) \rightarrow (L^\infty(\cir, \R)\ , \norm{\cdot}_{L^\infty}) \ ; \ c \mapsto \abs{c_\theta}^{- 1}.
    \end{align*}
\end{lemma}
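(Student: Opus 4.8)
\textbf{Proof proposal for Lemma \ref{length and sup locally bounded}.}

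The plan is to apply Lemma \ref{gronwall lemma} to each of the four functions in turn, estimating the directional derivative $D_{c,h}(f)$ by a constant (depending on the metric ball) times $(1+\|f(c)\|_X)\|h\|_{G_c^r}$, and then reading off boundedness from the conclusion of that lemma. The two main inputs will be the Sobolev embedding-type estimate \eqref{eq:Sob_embed} with $\ell = l_c$ (or rather its weaker global form $\|h\|_{L^\infty}\le C\max\{l_c^{-1/2},l_c^{q-1/2}\}\|h\|_{G_c^q}$ applied with $q=r$), together with the nesting inequalities \eqref{invariant nesting} and \eqref{homogeneous versus full norm nesting}. There is a circularity to watch: the embedding constant in \eqref{eq:Sob_embed} depends on $l_c$ and $l_c^{-1}$, so I must establish the bounds on $l_c$ and $l_c^{-1}$ \emph{first}, using only estimates whose constants do not themselves depend on these quantities, and only afterwards bootstrap to the $L^\infty$-bounds on $|c_\theta|^{\pm1}$.

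First I would treat $c\mapsto l_c = \int_{\cir}|c_\theta|\,d\theta$. Its directional derivative in direction $h$ is $D_{c,h}(l) = \int_{\cir}\langle D_s h, D_s c\rangle\, ds$ (the standard first variation of length; here $D_s c$ is the unit tangent, so $|D_s c|=1$), whence $|D_{c,h}(l)| \le \int_{\cir}|D_s h|\,ds = \|D_s h\|_{G_c^0}\cdot 1$. By Cauchy--Schwarz, $\|D_s h\|_{G_c^0} \le \sqrt{l_c}\,\|D_s h\|_{L^\infty}$, but this reintroduces $l_c$. A cleaner route: $\|D_s h\|_{G_c^0} = \|h\|_{\dot G_c^1} \le \|h\|_{G_c^1}$ by definition of the full metric, and then by \eqref{homogeneous versus full norm nesting} (applicable since $1\le r$, actually one needs $\|h\|_{\dot G_c^1}\le \|h\|_{G_c^r}$, which follows from \eqref{homogeneous versus full norm nesting} with $q=r$). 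Hence $|D_{c,h}(l)| \le \|h\|_{G_c^r}$, which is of the form \eqref{linear gronwall estimate} with $\beta=1$ and no dependence on $f(c)$ at all; Lemma \ref{gronwall lemma} then bounds $l_c$ on every metric ball. For $c\mapsto l_c^{-1}$, compute $D_{c,h}(l^{-1}) = -l_c^{-2} D_{c,h}(l)$, so $|D_{c,h}(l^{-1})| \le l_c^{-2}\|h\|_{G_c^r}$; to convert the $l_c^{-2}$ into the required form $\beta(1+l_c^{-1})$ I would use \eqref{invariant nesting} (with $q_1=1$, $q_2=r$) which gives $\|h\|_{\dot G_c^1}\le l_c^{r-1}\|h\|_{\dot G_c^r}$ — but this has the wrong sign of $l_c$. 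Instead the right move is to also invoke the already-established upper bound on $l_c$, say $l_c\le M$ on the ball, so $l_c^{-2} = l_c^{-1}\cdot l_c^{-1} \le l_c^{-1}\cdot(\text{nothing useful})$; better: $l_c^{-2}\|h\|_{G_c^r}$, and since $l_c^{-1}$ is exactly $f(c)$ here, we have $l_c^{-2} = l_c^{-1}\cdot l_c^{-1} \le \|f(c)\|\cdot l_c^{-1}$, and one more factor of $l_c^{-1}$ remains. The honest fix is: $|D_{c,h}(l^{-1})| \le l_c^{-2}\|h\|_{G_c^r} \le l_c^{-1}\cdot l_c^{-1}\|h\|_{G_c^r}$; using \eqref{invariant nesting} backwards won't help, so instead bound one copy of $l_c^{-1}$ by a constant? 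That is false in general. The correct observation is that $l_c^{-2}\le l_c^{-1}(1+l_c^{-1})$ trivially when... no. The clean statement is simply: apply Lemma \ref{gronwall lemma} with $X=\R$, $f(c)=l_c^{-1}$, and note $|D_{c,h}(l^{-1})| = l_c^{-2}|D_{c,h}(l)| \le l_c^{-2}\|h\|_{G_c^r} = \|f(c)\|_X^2\,\|h\|_{G_c^r} \le \|f(c)\|_X(1+\|f(c)\|_X)\|h\|_{G_c^r}$, and since $\|f(c)\|_X$ is not a priori bounded this is not quite \eqref{linear gronwall estimate}. So the genuinely correct approach for $l_c^{-1}$ is to first note $l_c \ge 2\,\mathrm{diam}(c)$ and more usefully that $l_c$ is bounded \emph{below} on a metric ball by a soft argument: if $c_n\to c$ in $\dist_{G_c^r}$ then the $L^\infty$-convergence of the curves (which follows from \eqref{eq:Sob_embed} with $\ell=\min\{1,l_c\}$ — but again circular).

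The resolution, and the step I expect to be the main obstacle, is exactly this lower bound on $l_c$: one cannot get it purely from a Gronwall estimate in the naive way because the natural bound on $D_{c,h}(l^{-1})$ is quadratic in $l_c^{-1}$. The standard trick (as in \cite{bruveris2015completeness,bruveris2014geodesic}) is to instead control $\log l_c$: since $D_{c,h}(\log l) = l_c^{-1}D_{c,h}(l)$ and $|D_{c,h}(l)| = |\int\langle D_sh,D_sc\rangle ds| \le \|h\|_{\dot G_c^1}\cdot\sqrt{l_c}$ ... hmm, one needs $|\int_{\cir}\langle D_sh,D_sc\rangle\,ds| \le \big(\int|D_sh|^2 ds\big)^{1/2}\big(\int 1\, ds\big)^{1/2} = \|D_sh\|_{G_c^0}\sqrt{l_c} = \|h\|_{\dot G_c^1}\sqrt{l_c}$. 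Then $|D_{c,h}(\log l)| \le l_c^{-1}\cdot\|h\|_{\dot G_c^1}\sqrt{l_c} = l_c^{-1/2}\|h\|_{\dot G_c^1}$, and now using \eqref{invariant nesting} with $q_1=1,q_2=r$ in the homogeneous metric — wait, that gives a positive power of $l_c$. Alternatively bound $\|h\|_{\dot G_c^1}\le\|h\|_{G_c^r}$ and accept the $l_c^{-1/2}$; to absorb it, write $l_c^{-1/2} \le 1 + l_c^{-1/2}$ and note $l_c^{-1/2} = e^{-\frac12\log l_c} \le 1 + |\log l_c|$ ... this is getting into the weeds, but morally: $|D_{c,h}(\log l_c)| \lesssim (1+|\log l_c|)\|h\|_{G_c^r}$ after using the already-proven \emph{upper} bound on $l_c$ to control $l_c^{-1/2}$ by a constant plus a term linear in $|\log l_c|$, and then Lemma \ref{gronwall lemma} (applied to $\log l_c$ as a real-valued function, which requires the semi-norm version stated after Lemma \ref{gronwall lemma}) gives $|\log l_c|$ bounded on the ball, hence $l_c^{-1}$ bounded.

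Finally, with $l_c$ and $l_c^{-1}$ now bounded on the ball, I would handle $c\mapsto|c_\theta|$ and $c\mapsto|c_\theta|^{-1}$ in $L^\infty$. Compute $D_{c,h}(|c_\theta|) = |c_\theta|\langle D_sh, D_sc\rangle$ (from $\partial_t|c_\theta| = \langle c_\theta, (\partial_t c)_\theta\rangle/|c_\theta| = |c_\theta|\langle D_s c, D_s h\rangle$), so $\|D_{c,h}(|c_\theta|)\|_{L^\infty} \le \||c_\theta|\|_{L^\infty}\|D_sh\|_{L^\infty}$. For $\|D_s h\|_{L^\infty} = \||c_\theta|^{-1}h_\theta\|_{L^\infty} \le \||c_\theta|^{-1}\|_{L^\infty}\|h_\theta\|_{L^\infty}$, and $\|h_\theta\|_{L^\infty}\lesssim\|h\|_{H^r}$ by Sobolev embedding ($r>3/2$ so $r-1>1/2$); then by Lemma \ref{sobolev embedding} applied with the bare $G_c^q$ — actually by \eqref{eq:Sob_embed} with $q=r$ and $\ell=\min\{1,l_c\}$, whose constant is now controlled since $l_c^{\pm1}$ are bounded, one gets $\|D_s h\|_{L^\infty} \lesssim \|h\|_{G_c^r}$ (alternatively, directly: $\|D_sh\|_{\dot G_c^0$-type... } cleanest is $\|D_s h\|_{L^\infty}\le C\max\{\ell^{-1/2},\ell^{r-3/2}\}\|D_sh\|_{G_c^{r-1}}$ via \eqref{eq:Sob_embed} at order $r-1$, and $\|D_s h\|_{G_c^{r-1}} = \|h\|_{G_c^r}$-ish after accounting for the full vs.\ homogeneous discrepancy with \eqref{homogeneous versus full norm nesting} — the bookkeeping here is routine once $l_c^{\pm 1}$ are bounded). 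Thus $\|D_{c,h}(|c_\theta|)\|_{L^\infty} \lesssim \||c_\theta|\|_{L^\infty}\|h\|_{G_c^r} \le (1+\||c_\theta|\|_{L^\infty})\|h\|_{G_c^r}$, which is \eqref{linear gronwall estimate} with $f(c)=|c_\theta|$, $X=L^\infty$; Lemma \ref{gronwall lemma} bounds $\||c_\theta|\|_{L^\infty}$ on the ball. For $|c_\theta|^{-1}$: $D_{c,h}(|c_\theta|^{-1}) = -|c_\theta|^{-1}\langle D_sh,D_sc\rangle$ pointwise (chain rule: $\partial_t|c_\theta|^{-1} = -|c_\theta|^{-2}\partial_t|c_\theta| = -|c_\theta|^{-1}\langle D_sc,D_sh\rangle$), so $\|D_{c,h}(|c_\theta|^{-1})\|_{L^\infty} \le \||c_\theta|^{-1}\|_{L^\infty}\|D_sh\|_{L^\infty} \lesssim \||c_\theta|^{-1}\|_{L^\infty}\|h\|_{G_c^r} \le (1+\||c_\theta|^{-1}\|_{L^\infty})\|h\|_{G_c^r}$, and one last application of Lemma \ref{gronwall lemma} finishes the proof. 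The only real subtlety, as flagged, is the order in which the bounds are extracted and the logarithm trick needed for the lower bound on $l_c$; everything else is a direct chain of the estimates collected in Lemmas \ref{lemma:com-pro}, \ref{relating homogeneous norms}, and \ref{sobolev embedding}.
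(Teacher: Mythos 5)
Your overall architecture (apply Lemma \ref{gronwall lemma} to each quantity, establish $l_c$ and $l_c^{-1}$ first, then bootstrap to $\abs{c_\theta}^{\pm1}$ via $\norm{D_s h}_{L^\infty}\lesssim\norm{h}_{G_c^r}$) matches the paper, and your treatment of $\abs{c_\theta}^{\pm1}$ is essentially the paper's argument (the paper bounds $\log\abs{c_\theta}$ to get both at once, but that is a cosmetic difference). However, there is a genuine gap at exactly the step you flag as the main obstacle: the bound on $l_c^{-1}$. Your proposed resolution — control $\log l_c$ and absorb the resulting factor $l_c^{-1/2}$ by ``a constant plus a term linear in $\abs{\log l_c}$, using the already-proven upper bound on $l_c$'' — does not work. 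The inequality $l_c^{-1/2}\le 1+\abs{\log l_c}$ is false as $l_c\to 0$ (a power beats a logarithm), and an upper bound on $l_c$ gives no control whatsoever on negative powers of $l_c$; so the estimate you would feed into Lemma \ref{gronwall lemma} is never established, and the lower bound on the length — the heart of the lemma, and the only place where $r>\tfrac32$ is used — is not proved.

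The mechanism you are missing is one you explicitly considered and discarded: the length-weighted nesting. Starting from $\abs{D_{c,h}(l_c)}\le l_c^{1/2}\norm{h}_{\dot G_c^1}$ (note, incidentally, that your identity $\int_{\cir}\abs{D_s h}\,ds=\norm{D_s h}_{G_c^0}$ is an $L^1$/$L^2$ confusion; Cauchy--Schwarz gives the factor $l_c^{1/2}$, and already for the upper bound on $l_c$ you then need the case split $l_c\gtrless 1$ to land on $(1+l_c)\norm{h}_{G_c^r}$), one splits cases for $l_c^{-1}$ as well. For $l_c>1$ the crude bound $\abs{D_{c,h}(l_c^{-1})}\le l_c^{-2}\,l_c\norm{h}_{G_c^r}=l_c^{-1}\norm{h}_{G_c^r}$ suffices. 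For $l_c\le 1$, use \eqref{invariant versus non-invariant} and \eqref{non-invariant nesting} (equivalently \eqref{invariant nesting} with $q_1=1$, $q_2=r$): $l_c^{1/2}\norm{h}_{\dot G_c^1}=\norm{h\circ\psi_c^{-1}}_{\dot H^1}\le\norm{h\circ\psi_c^{-1}}_{\dot H^r}=l_c^{r-1/2}\norm{h}_{\dot G_c^r}$, so that
\begin{equation}
\abs{D_{c,h}(l_c^{-1})}\le l_c^{-2}\,l_c^{r-1/2}\norm{h}_{\dot G_c^r}=l_c^{-1}\,l_c^{r-3/2}\norm{h}_{\dot G_c^r}\le l_c^{-1}\norm{h}_{G_c^r},
\end{equation}
precisely because $r>\tfrac32$ and $l_c\le 1$ make $l_c^{r-3/2}\le 1$. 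You rejected this route because it ``gives a positive power of $l_c$'', but in the dangerous regime $l_c\le1$ that positive power is exactly what cancels the excess negative power and yields an estimate linear in $l_c^{-1}$, to which Lemma \ref{gronwall lemma} applies directly — no logarithm trick needed. With this repaired, the rest of your argument (including the non-circular version of $\norm{D_s h}_{L^\infty}\lesssim\norm{h}_{G_c^r}$ via \eqref{eq:Sob_embed} at order $r-1$, rather than the detour through $\norm{h_\theta}_{L^\infty}\lesssim\norm{h}_{H^r}$, which would presuppose the not-yet-available norm equivalence of Lemma \ref{lemma 1}) goes through as in the paper.
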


 It should be noted that one can actually show that, for $r>\frac{3}{2}$, the function $c\mapsto l_c$ is bounded on $G_c^q$-metric balls in $\mathcal{I}^r$ for $1\leq q \leq r$. This can be achieved by carefully following the proof below and suitably adapting Lemma \ref{gronwall lemma}. 

\begin{proof}
    Calculating the derivative of $c \mapsto l_c$ in the direction of $h \in T_c\mathcal{I}^{r}$ we acquire
    \begin{equation}
    D_{c,h}(l_c) = \frac{d}{dt}\bigg\vert_{t=0} l_{\sigma(t)} = \frac{d}{dt}\bigg\vert_{t=0} \left(\int_0^1 \abs{\sigma_\theta(t) } \ d\theta\right) = \int_{\cir} \ip{D_s h, D_s c} \ ds,
    \end{equation}
    where $D_s c = \frac{c_\theta}{\abs{c_\theta}}$ is the unit tangent vector to $c$. From this we estimate
    \begin{align*}
        \abs{D_{c,h}(l_c)} &= \left|\int_{\cir} \ip{D_s h, D_s c} \ ds\right| \leq \int_{\cir} \abs{D_s h} \ ds \leq l_c^{\frac{1}{2}} \norm{D_s h}_{G_c^0} = l_c^{\frac{1}{2}} \norm{h}_{\dot{G}_c^1},
    \end{align*}
    where the second inequality follows from H\"older inequality.
    If $l_c>1$ we have
    \begin{align*}
        \abs{D_{c,h}(l_c)} &\leq l_c^{\frac{1}{2}} \norm{h}_{\dot{G}_c^1} \leq l_c \norm{h}_{\dot{G}_c^1} \leq l_c \norm{h}_{{G}_c^r}.
    \end{align*}
    where in the last inequality we have used \eqref{homogeneous versus full norm nesting}. On the other hand, if $l_c \leq 1$ we have
    \begin{align*}
        \abs{D_{c,h}(l_c)} &\leq l_c^{\frac{1}{2}} \norm{h}_{\dot{G}_c^1} \leq {\norm{h}_{\dot{G}_c^1} \leq \norm{h}_{G_c^r}},
    \end{align*}
    where again in the last inequality we have used \eqref{homogeneous versus full norm nesting}. In either scenario the first result follows from Lemma \ref{gronwall lemma}.
    
    For the second estimate we compute
    \begin{align*}
        D_{c,h}(l_c^{-1}) &= l_c^{-2} D_{c,h}(l_c).
    \end{align*}
    If $l_c>1$, using the above we have
    \begin{align*}
        \abs{D_{c,h}(l_c^{-1})} &= l_c^{-2} \abs{D_{c,h}(l_c)} \leq l_c^{-2} l_c \norm{h}_{{G}_c^r} = l_c^{-1} \norm{h}_{{G}_c^r}.
    \end{align*}
    On the other hand, if $l_c \leq 1$ we estimate
    \begin{align*}
        \abs{D_{c,h}(l_c^{-1})} &= l_c^{-2} \abs{D_{c,h}(l_c)} \leq l_c^{-2} l_c^{\frac{1}{2}} \norm{h}_{\dot{G}_c^1} = l_c^{-2} \norm{h \circ \psi_c^{-1}}_{\dot{H}^1} \\
        &\leq l_c^{-2} \norm{h \circ \psi_c^{-1}}_{\dot{H}^r} = l_c^{-2} l_c^{r-\frac{1}{2}} \norm{h}_{\dot{G}_c^r} = l_c^{-1} l_c^{r-\frac{3}{2}} \norm{h}_{\dot{G}_c^r} \\
        &\leq l_c^{-1} \norm{h}_{\dot{G}_c^r},
    \end{align*}
    where in the second and third equalities we have used \eqref{invariant versus non-invariant}, in the second inequality we have used \eqref{non-invariant nesting} and in the final inequality we have used the fact that $r > \frac{3}{2}$. The result once again follows from Lemma \ref{gronwall lemma}.

    To establish the last two results simultaneously, we prove the boundedness of the map $c \mapsto \log \abs{c_\theta}$. Calculating the derivative in the direction of $h \in T_c\mathcal{I}^r$ we have
    \begin{align*}
        D_{c,h}(\log \abs{c_\theta}) = \frac{1}{\abs{c_\theta}}D_{c,h}(\abs{c_\theta}) = \ip{D_s h , D_s c}
        .
    \end{align*}
    From this we estimate
    \begin{align*}
        \norm{D_{c,h}(\log \abs{c_\theta})}_{L^\infty} &= \norm{\ip{D_s h , D_s c}}_{L^\infty} \leq \norm{D_s h}_{L^\infty} \lesssim \norm{D_s h}_{G_c^{r-1}} \lesssim \norm{h}_{G_c^r},
    \end{align*}
    where the last two inequalities follow from Lemma \ref{eq:Sob_embed}, \eqref{invariant nesting} and the boundedness of $l_c$ and $l_c^{-1}$ on $G_c^r$-metric balls. The result then follows from Lemma \ref{gronwall lemma}.
\end{proof}

We now prove a weaker version of Lemma \ref{lemma 1}. We show that, for $r>\frac{3}{2}$ and $0\leq p \leq 1$, on $G_c^r$-metric balls in $\mathcal{I}^r$ we have the equivalence $\norm{\cdot}_{\dot{G}_c^p} \simeq \norm{\cdot}_{\dot{H}^p}$; whereas Lemma \ref{lemma 1} is concerned with the equivalence $\norm{\cdot}_{{G}_c^r} \simeq \norm{\cdot}_{{H}^r}$.

\begin{lemma}\label{mini lemma 1}
    Let $r>\frac{3}{2}$ and $0\leq p \leq 1$. Then, for any $G_c^r$-metric ball $B_{G_c^r}(c_0, \rho)$ in $\mathcal{I}^r$, there exists an $\beta(c_0,\rho,{r}, p) >0$ such that, for all $c \in B_{G_c^r}(c_0,\rho)$ and all $h \in T_c\mathcal{I}^r$, we have
    \begin{equation}\label{mini equivalence}
        \beta^{-1}\norm{h}_{\dot{H}^p} \leq \norm{h}_{\dot{G}_c^p} \leq \beta \norm{h}_{\dot{H}^p}.
    \end{equation}
\end{lemma}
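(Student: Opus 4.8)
The plan is to exploit the explicit formula \eqref{invariant versus non-invariant}, which gives
$\norm{h}_{\dot{G}_c^p}^2 = l_c^{1-2p}\norm{h\circ\psi_c^{-1}}_{\dot{H}^p}^2$, and then transfer back to $\norm{h}_{\dot{H}^p}$ by estimating the effect of precomposition with the constant-speed reparametrization $\psi_c^{-1}$. The prefactor $l_c^{1-2p}$ is harmless: by Lemma \ref{length and sup locally bounded} both $l_c$ and $l_c^{-1}$ are bounded on the metric ball $B_{G_c^r}(c_0,\rho)$, so $l_c^{1-2p}\simeq 1$ there, with a constant depending on $r$, $c_0$, $\rho$ and $p$. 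Thus it suffices to show $\norm{h\circ\psi_c^{-1}}_{\dot{H}^p}\simeq \norm{h}_{\dot{H}^p}$ on the ball.

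First I would handle the composition estimate in both directions using \eqref{composition}. Since $r>\tfrac32$ and $0\le p\le 1$, that inequality applies with $a=p$ and $\varphi=\psi_c^{-1}\in\mathcal{D}^{r}(\cir)$ (note $\psi_c\in\mathcal{D}^{r}$ because $c\in\mathcal{I}^r$), giving
\[
\norm{h\circ\psi_c^{-1}}_{\dot{H}^p}\le \norm{(\psi_c)_\theta}_{L^\infty}^{\frac{1-p}{2}}\norm{(\psi_c^{-1})_\theta}_{L^\infty}^{\frac{p}{2}}\norm{h}_{\dot{H}^p},
\]
and symmetrically, applying \eqref{composition} to $h\circ\psi_c^{-1}$ composed with $\psi_c$,
\[
\norm{h}_{\dot{H}^p}=\norm{(h\circ\psi_c^{-1})\circ\psi_c}_{\dot{H}^p}\le \norm{(\psi_c^{-1})_\theta}_{L^\infty}^{\frac{1-p}{2}}\norm{(\psi_c)_\theta}_{L^\infty}^{\frac{p}{2}}\norm{h\circ\psi_c^{-1}}_{\dot{H}^p}.
\]
So the whole lemma reduces to controlling $\norm{(\psi_c)_\theta}_{L^\infty}$ and $\norm{(\psi_c^{-1})_\theta}_{L^\infty}$ on the metric ball.

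The key observation is that $(\psi_c)_\theta = \abs{c_\theta}/l_c$ and $(\psi_c^{-1})_\theta = l_c/\abs{c_\theta\circ\psi_c^{-1}}$, so both are pointwise quotients of the quantities $\abs{c_\theta}$, $\abs{c_\theta}^{-1}$ and $l_c$, $l_c^{-1}$ — all four of which are bounded on $B_{G_c^r}(c_0,\rho)$ by Lemma \ref{length and sup locally bounded}. (For $(\psi_c^{-1})_\theta$ one uses that precomposition with a diffeomorphism does not change the $L^\infty$-norm, so $\norm{1/\abs{c_\theta\circ\psi_c^{-1}}}_{L^\infty}=\norm{\abs{c_\theta}^{-1}}_{L^\infty}$.) Hence $\norm{(\psi_c)_\theta}_{L^\infty}$ and $\norm{(\psi_c^{-1})_\theta}_{L^\infty}$ are bounded on the ball by a constant depending only on $r$, $c_0$, $\rho$, and combining this with the two displayed composition estimates and the bound on $l_c^{1-2p}$ yields \eqref{mini equivalence} with $\beta=\beta(c_0,\rho,r,p)$. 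I do not expect any genuine obstacle here: the only mild point to get right is that Lemma \ref{length and sup locally bounded} is stated for $G_c^r$-metric balls, which is exactly the setting here, and that \eqref{composition} indeed covers the full range $0\le p\le 1$ needed.
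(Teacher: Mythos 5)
Your proposal is correct and follows essentially the same route as the paper: rewrite $\norm{h}_{\dot G_c^p}$ via \eqref{invariant versus non-invariant}, apply the composition estimate \eqref{composition} with $\varphi=\psi_c^{\pm1}$ in both directions, and absorb the resulting factors using the bounds on $l_c^{\pm1}$ and $\abs{c_\theta}^{\pm1}$ from Lemma \ref{length and sup locally bounded}. The only cosmetic difference is that the paper observes the powers of $l_c$ cancel identically (so only $\norm{\abs{c_\theta}}_{L^\infty}$ and $\norm{\abs{c_\theta}^{-1}}_{L^\infty}$ remain), whereas you bound $l_c^{1-2p}$ separately, which is equally valid.
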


\begin{proof}
    From Lemma \ref{relating homogeneous norms} and \eqref{composition} we estimate
    \begin{align*}
        \norm{h}_{\dot{G}_c^{p}} &= l_c^{\frac{1}{2}-p} \norm{h \circ \psi_c^{-1}}_{\dot{H}^p} \\
        &\leq l_c^{\frac{1}{2}-p} \norm{\frac{\abs{c_\theta}}{l_c}}_{L^\infty}^{\frac{1-p}{2}} \norm{\frac{l_c}{\abs{c_\theta}}}_{L^\infty}^{\frac{p}{2}} \norm{h}_{\dot{H}^p} \\
        &= \norm{\abs{c_\theta}}_{L^\infty}^{\frac{1-p}{2}} \norm{\abs{c_\theta}^{-1}}_{L^\infty}^{\frac{p}{2}} \norm{h}_{\dot{H}^p}
    \end{align*}
    and
    \begin{align*}
        \norm{h}_{\dot{G}_c^{p}} &= l_c^{\frac{1}{2}-p} \norm{h \circ \psi_c^{-1}}_{\dot{H}^p} \\
        &\geq l_c^{\frac{1}{2}-p} \norm{\frac{l_c}{\abs{c_\theta}}}_{L^\infty}^{\frac{-1+p}{2}} \norm{\frac{\abs{c_\theta}}{l_c}}_{L^\infty}^{-\frac{p}{2}} \norm{h}_{\dot{H}^p} \\
        &= \norm{\abs{c_\theta}^{-1}}_{L^\infty}^{\frac{-1+p}{2}} \norm{\abs{c_\theta}}_{L^\infty}^{-\frac{p}{2}} \norm{h}_{\dot{H}^p}.
    \end{align*}
    The result then follows from Lemma \ref{length and sup locally bounded}.
\end{proof}
The next two lemmas will play key technical roles in the proof of Lemma \ref{lemma 1}.
\begin{lemma}\label{baseline}
    For {$r>3/2$} the following functions are bounded on every $G_c^r$-metric ball $B_{G_c^r}(c_0, \rho)$ in $\mathcal{I}^r$ by a constant depending only on $r, c_0$ and $\rho$:
    \begin{align*}
        &(\mathcal{I}^r, \dist_{G_c^r}) \rightarrow (\dot{H}^{{\tilde{r}}}(\cir, \R^d), \norm{\cdot}_{\dot{H}^{{\tilde{r}}}}) \ ; \ c \mapsto D_s c \ , \\
        &(\mathcal{I}^r, \dist_{G_c^r}) \rightarrow (\dot{H}^{{\tilde{r}}}(\cir, \R^d), \norm{\cdot}_{\dot{H}^{{\tilde{r}}}}) \ ; \ c \mapsto \abs{c_\theta}^{\pm 1},
        \end{align*}
    {where $\tilde{r} = \min\{r-1,1\}$.}
\end{lemma}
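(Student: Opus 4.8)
The plan is to apply the Gronwall-type Lemma~\ref{gronwall lemma} separately to each of the three maps $c\mapsto D_s c$, $c\mapsto\abs{c_\theta}$ and $c\mapsto\abs{c_\theta}^{-1}$, regarded --- via the remark following Lemma~\ref{gronwall lemma} about semi-normed targets --- as $C^1$ maps $\mathcal{I}^r\to\bigl(H^{r-1}(\cir,\R^d),\norm{\cdot}_{\dot H^{\tilde r}}\bigr)$; their $C^1$-smoothness is routine since $r-1>\tfrac12$. The whole argument rests on two auxiliary bounds, valid on any $G_c^r$-metric ball and \emph{not} using Lemma~\ref{lemma 1}: the bound $\norm{D_s h}_{L^\infty}\lesssim\norm{h}_{G_c^r}$, already obtained inside the proof of Lemma~\ref{length and sup locally bounded}, and the bound $\norm{D_s h}_{\dot H^{\tilde r}}\lesssim\norm{h}_{G_c^r}$, which follows from the same chain of estimates that proves Lemma~\ref{mini lemma 1} --- using \eqref{composition} and \eqref{invariant versus non-invariant} to pass between $\dot H^{\tilde r}$ and $\dot G_c^{\tilde r}$, the identity $\norm{D_s h}_{\dot G_c^{\tilde r}}=\norm{h}_{\dot G_c^{\tilde r+1}}$, the nesting \eqref{invariant nesting} (legitimate because $\tilde r+1\le r$), and the boundedness of $l_c^{\pm1}$ and $\abs{c_\theta}^{\pm1}$ in $L^\infty$ from Lemma~\ref{length and sup locally bounded}.

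I would first treat $c\mapsto D_s c$. Computing the directional derivative gives $D_{c,h}(D_s c)=D_s h-\ip{D_s h,D_s c}D_s c$, i.e.\ the component of $D_s h$ orthogonal to the unit tangent vector. Estimating the second summand by applying the product estimate \eqref{unit vector product} twice --- permissible since $\tilde r\le1$ --- and using $\abs{D_s c}\equiv1$ together with the two auxiliary bounds, one arrives at
\[
\norm{D_{c,h}(D_s c)}_{\dot H^{\tilde r}}\lesssim\norm{h}_{G_c^r}\bigl(1+\norm{D_s c}_{\dot H^{\tilde r}}\bigr),
\]
which is exactly hypothesis \eqref{linear gronwall estimate} of Lemma~\ref{gronwall lemma} with $X=H^{r-1}$ carrying the seminorm $\norm{\cdot}_{\dot H^{\tilde r}}$ and $f(c)=D_s c$. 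Hence $c\mapsto D_s c$ is bounded on every $G_c^r$-metric ball.

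Next I would treat $c\mapsto\abs{c_\theta}$ and $c\mapsto\abs{c_\theta}^{-1}$, in that order \emph{after} $D_s c$. Here $D_{c,h}(\abs{c_\theta})=\abs{c_\theta}\ip{D_s c,D_s h}$ and $D_{c,h}(\abs{c_\theta}^{-1})=-\abs{c_\theta}^{-1}\ip{D_s c,D_s h}$. Applying \eqref{unit vector product} with the factor $\abs{c_\theta}^{\pm1}$ put in the low-regularity slot, and using that $\norm{\abs{c_\theta}^{\pm1}}_{L^\infty}$ is bounded on metric balls, that $\norm{\ip{D_s c,D_s h}}_{L^\infty}\le\norm{D_s h}_{L^\infty}\lesssim\norm{h}_{G_c^r}$, and that $\norm{\ip{D_s c,D_s h}}_{\dot H^{\tilde r}}\lesssim\norm{h}_{G_c^r}\bigl(1+\norm{D_s c}_{\dot H^{\tilde r}}\bigr)\lesssim\norm{h}_{G_c^r}$ --- now legitimately invoking the boundedness of $\norm{D_s c}_{\dot H^{\tilde r}}$ established in the previous step --- one obtains $\norm{D_{c,h}(\abs{c_\theta}^{\pm1})}_{\dot H^{\tilde r}}\lesssim\norm{h}_{G_c^r}\bigl(1+\norm{\abs{c_\theta}^{\pm1}}_{\dot H^{\tilde r}}\bigr)$, and Lemma~\ref{gronwall lemma} concludes.

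The difficulty here is organizational rather than analytic: one must order the three maps so that $D_s c$ comes first, since it is $\norm{D_s c}_{\dot H^{\tilde r}}$, not a norm of $\abs{c_\theta}^{\pm1}$, that appears as an auxiliary factor in the estimates for $\abs{c_\theta}^{\pm1}$; and in every use of \eqref{unit vector product} one must route the unit vector $D_s c$ (whose $L^\infty$-norm is $1$ for free) and the metric-ball-controlled factors $\abs{c_\theta}^{\pm1}$ into the $L^\infty$-slot, so that the only $\dot H^{\tilde r}$-norm surviving on the right is precisely the one Lemma~\ref{gronwall lemma} is built to absorb. The single subtlety worth spelling out is that $D_s h$ has only $H^{r-1}$-regularity, so in establishing $\norm{D_s h}_{\dot H^{\tilde r}}\lesssim\norm{h}_{G_c^r}$ one should note that the proof of Lemma~\ref{mini lemma 1} applies verbatim to $H^{\tilde r}$-vector fields along $c$.
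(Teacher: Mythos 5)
Your proposal is correct and follows essentially the same route as the paper: compute the variations $D_{c,h}(D_s c)$ and $D_{c,h}(\abs{c_\theta}^{\pm 1})$, control them via the product estimates together with the auxiliary bounds $\norm{D_s h}_{L^\infty}\lesssim\norm{h}_{G_c^r}$ and $\norm{D_s h}_{\dot H^{\tilde r}}\lesssim\norm{h}_{G_c^r}$ (Lemmas~\ref{mini lemma 1}, \ref{length and sup locally bounded}, \eqref{invariant nesting}), and then invoke Lemma~\ref{gronwall lemma}, treating $D_s c$ first so that $\norm{D_s c}_{\dot H^{\tilde r}}$ is available for the speed terms. The only deviations are cosmetic --- you use \eqref{unit vector product} where the paper applies \eqref{product} with $a=b=\tilde r$ to $\ip{D_s h,D_s c}\abs{c_\theta}$, and you treat $\abs{c_\theta}^{-1}$ explicitly rather than by the paper's ``similarly'' --- and your remark that the argument behind Lemma~\ref{mini lemma 1} applies to the merely $H^{r-1}$-regular field $D_s h$ correctly addresses a point the paper glosses over.
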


\begin{proof}
    Note that, for $r>\frac{3}{2}$, we have both $\frac{1}{2}<\tilde{r} \leq 1$ and $\tilde{r}+1 \leq r$.
    Computing the derivatives of both functions in the direction $h\in T_c\mathcal{I}^r$ we have
    \begin{equation}\label{unit vector derivative}
        D_{c,h}(D_s c) = D_s h - \ip{D_s h , D_s c}D_s c
    \end{equation}
    and
    \begin{equation}\label{speed derivative}
        D_{c,h}(\abs{c_\theta}) = \ip{D_s h, D_s c} \abs{c_\theta}.
    \end{equation}
    Applying the triangle inequality to \eqref{unit vector derivative} we have
    \begin{align*}
        \norm{D_{c,h}(D_s c)}_{\dot{H}^{\tilde{r}}} &\leq \norm{D_s h}_{\dot{H}^{\tilde{r}}} + \norm{\ip{D_s h, D_s c}D_s c}_{\dot{H}^{\tilde{r}}}.
    \end{align*}
    For the first term above we apply Lemma \ref{mini lemma 1} and obtain
    \begin{align*}
        \norm{D_s h}_{\dot{H}^{\tilde{r}}} &\simeq \norm{D_s h}_{\dot{G}_c^{\tilde{r}}} = \norm{h}_{\dot{G}_c^r} \leq \norm{h}_{G_c^r}.
    \end{align*}
    While, for the second term, noting that $\norm{D_s c}_{L^\infty}=1$, we apply \eqref{unit vector product} and obtain
    \begin{equation}\label{lemma 4.6 expansion 1}
        \begin{split}
        \norm{\ip{D_s h, D_s c}D_s c}_{\dot{H}^{\tilde{r}}} &\lesssim \norm{\ip{D_s h, D_s c}}_{\dot{H}^{\tilde{r}}} + \norm{\ip{D_s h, D_s c}}_{L^\infty} \norm{D_s c}_{\dot{H}^{\tilde{r}}} \\
        &\lesssim \norm{\ip{D_s h, D_s c}}_{\dot{H}^{\tilde{r}}} + \norm{D_s h}_{L^\infty} \norm{D_s c}_{L^\infty} \norm{D_s c}_{\dot{H}^{\tilde{r}}} \\
        &\lesssim \norm{\ip{D_s h, D_s c}}_{\dot{H}^{\tilde{r}}} + \norm{D_s h}_{L^\infty} \norm{D_s c}_{\dot{H}^{\tilde{r}}}.
        \end{split}
    \end{equation}
    We approach the term $\norm{\ip{D_s h, D_s c}}_{\dot{H}^{\tilde{r}}}$ in an identical fashion, applying \eqref{unit vector product}
    \begin{equation}\label{lemma 4.6 expansion 1.2}
        \norm{\ip{D_s h, D_s c}}_{\dot{H}^{\tilde{r}}} \lesssim \norm{D_s h}_{\dot{H}^{\tilde{r}}} + \norm{D_s h}_{L^\infty}\norm{D_s c}_{\dot{H}^{\tilde{r}}}.
    \end{equation}
    By Lemmas \ref{sobolev embedding} and \ref{length and sup locally bounded} {and \eqref{invariant nesting}} we have $\norm{D_s h}_{L^\infty} \lesssim \norm{D_s h}_{G_c^{\tilde{r}}} \leq \norm{h}_{G_c^r}$ on $G_c^r$-metric balls. Combining all of this, \eqref{lemma 4.6 expansion 1} takes the form
    \begin{align*}
        \norm{\ip{D_s h, D_s c}D_s c}_{\dot{H}^{\tilde{r}}} &\lesssim \norm{h}_{G_c^r} + \norm{D_s c}_{\dot{H}^{\tilde{r}}} \norm{h}_{G_c^r} + \norm{D_s c}_{\dot{H}^{\tilde{r}}} \norm{h}_{G_c^r} \\
        &\lesssim \left(1 + \norm{D_s c}_{\dot{H}^{\tilde{r}}} \right)\norm{h}_{G_c^r}.
    \end{align*}
    Hence, by Lemma \ref{gronwall lemma} we have that $\norm{D_s c}_{\dot{H}^{\tilde{r}}}$ is bounded on $G_c^r$-metric balls $B_{G_c^r}(c_0,\rho)$ in $\mathcal{I}^r$ by a constant depending only on $r, c_0$ and $\rho$.

    We now turn our attention to the map
    \begin{align*}
        &(\mathcal{I}^r, \dist_{G_c^r}) \rightarrow (\dot{H}^{{\tilde{r}}}(\cir, \R^d), \norm{\cdot}_{\dot{H}^{{\tilde{r}}}}) \ ; \ c \mapsto \abs{c_\theta}.
    \end{align*}
    Using \eqref{speed derivative} and applying \eqref{product} with $f=\ip{D_s h, D_s c}, g= \abs{c_\theta}, a=b=\tilde{r}>\frac{1}{2}$ we have
    \begin{equation}\label{lemma 4.6 expansion 2}
        \begin{split}
            \norm{D_{c,h}(\abs{c_\theta})}_{\dot{H}^{\tilde{r}}} &= \norm{\ip{D_s h, D_s c}\abs{c_\theta}}_{\dot{H}^{\tilde{r}}} \\
            &\lesssim \abs{\widehat{\ip{D_s h, D_s c}}(0)} \norm{\abs{c_\theta}}_{\dot{H}^{\tilde{r}}} + \abs{\widehat{\abs{c_\theta}}(0)}\norm{\ip{D_s h, D_s c}}_{\dot{H}^{\tilde{r}}} \\
            &\quad + \norm{\ip{D_s h, D_s c}}_{\dot{H}^{\tilde{r}}} \norm{\abs{c_\theta}}_{\dot{H}^{\tilde{r}}}.
        \end{split}
    \end{equation}
    
    For the first term of \eqref{lemma 4.6 expansion 2} we have, using the above, that
    \begin{align*}
        \abs{\widehat{\ip{D_s h, D_s c}}(0)} &\leq \norm{\ip{D_s h, D_s c}}_{L^\infty} \lesssim \norm{D_s h}_{L^\infty} \norm{D_s c}_{L^\infty}\leq \norm{h}_{G_c^r}.
    \end{align*}
    For the second term we estimate $\abs{\widehat{\abs{c_\theta}}(0)}\leq \norm{\abs{c_\theta}}_{L^\infty}$ and recall, by Lemma \ref{length and sup locally bounded}, that this is bounded on $G_c^r$-metric balls $B_{G_c^r}(c_0,\rho)$ by a constant depending only on $r, c_0$ and $\rho$. Recycling \eqref{lemma 4.6 expansion 1.2} and remarking that, from the above argument, we now have that $\norm{D_s c}_{\dot{H}^{\tilde{r}}}$ is bounded on $G_c^r$-metric balls $B_{G_c^r}(c_0,\rho)$ in $\mathcal{I}^r$ by a constant depending only on $r, c_0$ and $\rho$, we acquire
    \begin{align*}
        \norm{\ip{D_s h, D_s c}}_{\dot{H}^{\tilde{r}}} &\lesssim \norm{D_s h}_{\dot{H}^{\tilde{r}}} + \norm{D_s h}_{L^\infty}\norm{D_s c}_{\dot{H}^{\tilde{r}}} \lesssim \norm{h}_{G_c^r}.
    \end{align*}
    Combining all of this, \eqref{lemma 4.6 expansion 2} becomes
    \begin{equation}\label{lemma 4.6 expansion 2.1}
        \begin{split}
            \norm{D_{c,h}(\abs{c_\theta})}_{\dot{H}^{\tilde{r}}} &\lesssim \norm{\abs{c_\theta}}_{\dot{H}^{\tilde{r}}}\norm{h}_{G_c^r} + \norm{h}_{G_c^r} + \norm{\abs{c_\theta}}_{\dot{H}^{\tilde{r}}}\norm{h}_{G_c^r} \\
            &\lesssim \left(1+\norm{\abs{c_\theta}}_{\dot{H}^{\tilde{r}}}\right)\norm{h}_{G_c^r}.
        \end{split}
    \end{equation}
    Hence, by Lemma \ref{gronwall lemma} we have that $\norm{\abs{c_\theta}}_{\dot{H}^{\tilde{r}}}$ is bounded on $G_c^r$-metric balls $B_{G_c^r}(c_0,\rho)$ in $\mathcal{I}^r$ by a constant depending only on $r, c_0$ and $\rho$. Similarly, $\norm{\abs{c_\theta}^{-1}}_{\dot{H}^{\tilde{r}}}$ is bounded on $G_c^r$-metric balls $B_{G_c^r}(c_0,\rho)$ in $\mathcal{I}^r$ by a constant depending only on $r, c_0$ and $\rho$.
\end{proof}

\begin{lemma}\label{workhorse}
If $r>2$ with decomposition $r=p + n$ for some $0 < p \leq 1$ and $n\geq 2$ an integer, then, for $1\leq k \leq n$, the following functions are bounded on every $G_c^r$-metric ball $B_{G_c^r}(c_0, \rho)$ in $\mathcal{I}^r$ by a constant depending only on $r, p, n, k, c_0$ and $\rho$.
    \begin{align*}
        &(\mathcal{I}^r, \dist_{G_c^r}) \rightarrow (\dot{H}^{p}(\cir, \R), \norm{\cdot}_{\dot{H}^{p}}) \ ; \ c \mapsto D_s^k c \ ,\\
        &(\mathcal{I}^r, \dist_{G_c^r}) \rightarrow (\dot{H}^{p}(\cir, \R), \norm{\cdot}_{\dot{H}^{p}}) \ ; \ c \mapsto D_s^{k-1} \abs{c_\theta} \ ,\\
        &(\mathcal{I}^r, \dist_{G_c^r}) \rightarrow (\dot{H}^{p+{k-1}}(\cir, \R), \norm{\cdot}_{\dot{H}^{p+{k-1}}}) \ ; \ c \mapsto \abs{c_\theta}^{\pm 1}.
    \end{align*}
\end{lemma}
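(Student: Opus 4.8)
The plan is to prove the Lemma by induction on $k$, from $k=1$ up to $k=n$, establishing all three families of bounds simultaneously at each stage via Lemma~\ref{gronwall lemma}; this is the same bootstrapping scheme used in Lemmas~\ref{length and sup locally bounded} and~\ref{baseline}, now iterated. It is convenient to carry a strengthened hypothesis $P(k)$ that also records the intermediate bounds $c\mapsto D_s^jc\in\dot{H}^{p+k-j}$ and $c\mapsto D_s^{j-1}\abs{c_\theta}\in\dot{H}^{p+k-j}$ for $1\le j\le k$, so that the three statements of the Lemma are the endpoint cases $j=k$ and $j=1$ (the bound on $\abs{c_\theta}^{-1}$ always being obtained in tandem with $\abs{c_\theta}$). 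The base case $k=1$ is immediate: since $r>2$ we have $\min\{r-1,1\}=1$, so Lemma~\ref{baseline} gives $D_sc,\abs{c_\theta}^{\pm1}\in\dot{H}^1$ on every $G_c^r$-metric ball, and the nesting \eqref{non-invariant nesting} (valid for $0<p\le1$) produces the required $\dot{H}^p$-bounds.

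For the inductive step, I would assume $P(k-1)$ with $2\le k\le n$ and feed the maps $c\mapsto D_s^kc$, $c\mapsto D_s^{k-1}\abs{c_\theta}$ and $c\mapsto\abs{c_\theta}^{\pm1}$ (into $\dot{H}^p$, $\dot{H}^p$, $\dot{H}^{p+k-1}$ respectively) into Lemma~\ref{gronwall lemma}. The variational identities one needs are $D_{c,h}\abs{c_\theta}=\ip{D_sh,D_sc}\abs{c_\theta}$ and $D_{c,h}c=h$, cf.\ \eqref{speed derivative}, together with the commutator rule $D_{c,h}(D_sF)=D_s(D_{c,h}F)-\ip{D_sh,D_sc}D_sF$ valid for any $c$-dependent $F$ (which follows from $D_{c,h}(\abs{c_\theta}^{-1})=-\ip{D_sh,D_sc}\abs{c_\theta}^{-1}$ and $D_s=\tfrac{1}{\abs{c_\theta}}\partial_\theta$). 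Iterating the commutator rule and expanding with the Leibniz formula for the derivation $D_s$, each directional derivative decomposes into a \emph{principal term} --- namely $D_s^kh$, respectively $\ip{D_s^kh,D_sc}\abs{c_\theta}$ and its analogues --- plus \emph{lower-order terms}, each a product of some $D_s$-derivative of $\ip{D_sh,D_sc}$ with some $D_s$-derivative of $c$ or of $\abs{c_\theta}^{\pm1}$. The principal term is controlled by passing to the invariant seminorm via Lemma~\ref{mini lemma 1} (legitimate since $p\le1$), $\norm{D_s^kh}_{\dot{H}^p}\simeq\norm{D_s^kh}_{\dot{G}_c^p}=\norm{h}_{\dot{G}_c^{p+k}}$, and then using \eqref{invariant nesting} with the boundedness of $l_c^{\pm1}$ (Lemma~\ref{length and sup locally bounded}) to conclude $\norm{D_s^kh}_{\dot{H}^p}\lesssim\norm{h}_{G_c^r}$; the condition $k\le n$, i.e.\ $p+k\le r$, is precisely what this step needs. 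Each lower-order term is bounded using the product inequalities \eqref{product}, \eqref{full norm product} and \eqref{unit vector product}, feeding in: $P(k-1)$; the $L^\infty$-bounds on $\abs{c_\theta}^{\pm1}$ from Lemma~\ref{length and sup locally bounded}; and the uniform estimates $\norm{D_s^jh}_{\dot{H}^p}\lesssim\norm{h}_{G_c^r}$ and $\norm{D_s^jh}_{L^2}\lesssim\norm{h}_{G_c^r}$ for $j\le n$, together with $\norm{\ip{D_sh,D_sc}}_{\dot{H}^1}\lesssim\norm{D_sh}_{\dot{H}^1}+\norm{D_sh}_{L^\infty}\lesssim\norm{h}_{G_c^r}$ --- the last one using $\norm{D_sh}_{\dot{H}^1}\simeq\norm{h}_{\dot{G}_c^2}\lesssim\norm{h}_{\dot{G}_c^r}$ and the invariant Sobolev embedding (Lemma~\ref{sobolev embedding}), both of which exploit $r>2$. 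In each of the three cases exactly one lower-order term has the Gronwall form, namely a constant times $\norm{h}_{G_c^r}$ times the norm of the very quantity being estimated, so Lemma~\ref{gronwall lemma} closes the step and yields $P(k)$.

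The main obstacle is bookkeeping rather than conceptual: one must keep the Leibniz expansions under control and, for every product encountered, check that the Sobolev exponents meet the hypotheses of Lemma~\ref{lemma:com-pro} --- in particular $b>\tfrac12$ in \eqref{product}/\eqref{full norm product} and $0\le a\le1$ in \eqref{unit vector product}. The delicate point, sharpest when $p\le\tfrac12$, is that the top-order quantities $D_s^kc$, $D_s^{k-1}\abs{c_\theta}$ (and the top $h$-derivatives $D_s^nh$) cannot be assumed bounded in $L^\infty$; I would handle this by always pairing such a factor against $\ip{D_sh,D_sc}$ (or against $\abs{c_\theta}^{\pm1}$) via \eqref{product}, which only costs the zero Fourier mode of the top-order factor --- controlled by its $L^2$-norm, in turn controlled through an integration-by-parts recursion such as $\int_{\cir} D_s^m c\,d\theta=-\int_{\cir} D_s^{m-1}c\,\partial_\theta(\abs{c_\theta}^{-1})\,d\theta$ and the already-established bounds --- plus a $\dot{H}^1$-norm of the other factor, rather than via \eqref{unit vector product}, which would demand the missing $L^\infty$-control. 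Once the distribution of derivatives among the factors is arranged so that $h$ never acquires more than $n$ arc-length derivatives and $c$ (resp.\ $\abs{c_\theta}$) never carries its top-order derivative except against such a favourable factor, every individual estimate is routine, and verifying that the strengthened hypothesis $P(k)$ (with its intermediate-order claims) is reproduced by the same argument is then automatic.
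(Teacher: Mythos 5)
Your treatment of the first two maps follows the paper's proof essentially verbatim: induction on $k$, the variational formulas (your commutator rule reproduces the identities quoted from \cite[Lemma~3.3]{bruveris2015completeness}), the principal-term estimate $\norm{D_s^kh}_{\dot H^p}\simeq\norm{D_s^kh}_{\dot G_c^p}=\norm{h}_{\dot G_c^{p+k}}\lesssim\norm{h}_{G_c^r}$ via Lemma~\ref{mini lemma 1}, \eqref{invariant nesting} and Lemma~\ref{length and sup locally bounded}, and the use of \eqref{product} with zero-Fourier-mode/$L^2$ control in place of unavailable $L^\infty$ bounds. The gap is in the third family (and in your strengthened intermediate claims $D_s^jc\in\dot H^{p+k-j}$ for $j<k$): you propose to feed $c\mapsto\abs{c_\theta}^{\pm1}$ into Lemma~\ref{gronwall lemma} with target $\dot H^{p+k-1}$. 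The required estimate \eqref{linear gronwall estimate} then demands $\norm{\ip{D_sh,D_sc}\abs{c_\theta}}_{\dot H^{p+k-1}}\lesssim(1+\norm{\abs{c_\theta}}_{\dot H^{p+k-1}})\norm{h}_{G_c^r}$, and since $p+k-1>1$ for $k\ge2$ any application of \eqref{product} forces you to control $\norm{\ip{D_sh,D_sc}}_{\dot H^{p+k-1}}$, hence in effect $\norm{D_sh}_{\dot H^{p+k-1}}$ (equivalently $\norm{h}_{\dot H^{p+k}}$), by $\norm{h}_{G_c^r}$. None of the ingredients you list ($\norm{D_s^jh}_{\dot H^p}$, $\norm{D_s^jh}_{L^2}$, $\norm{\ip{D_sh,D_sc}}_{\dot H^1}$) gives this: Lemma~\ref{mini lemma 1} only covers exponents in $[0,1]$, and the equivalence of the non-invariant norm with $G_c^r$-quantities at orders above $1$ is exactly Lemma~\ref{lemma 1}, which in the paper is proved \emph{after}, and \emph{by means of}, Lemma~\ref{workhorse}. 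Converting $\partial_\theta^{k}h$ back into $D_s^jh$'s costs products with $\partial_\theta^{i}\abs{c_\theta}^{\pm1}$ up to order $k-1$, i.e.\ the very unknown being estimated multiplied against other factors, and you have not shown that the resulting inequality retains the linear Gronwall structure; as sketched, the high-exponent step is circular or at least unsupported.

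The paper avoids this entirely: the Gronwall argument is run only at the low exponent $\dot H^p$ with $p\le1$ (for $D_s^kc$ and $D_s^{k-1}\abs{c_\theta}$, where Lemma~\ref{mini lemma 1} provides the conversion for the $h$-dependent terms), and the bound $\abs{c_\theta}^{\pm1}\in\dot H^{p+k-1}$ is then obtained with no Gronwall argument and no $h$ at all: one expands $\partial_\theta^{k-1}\abs{c_\theta}$ algebraically in terms of $D_s^{k-1}\abs{c_\theta}$ and lower-order $\theta$-derivatives of $\abs{c_\theta}$ (formula \eqref{expansion 3}), applies \eqref{product} and induction in $k$, and treats $\abs{c_\theta}^{-1}$ by the chain rule. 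To repair your proposal you should either adopt this algebraic step for the third family, or enlarge $P(k)$ to carry the norm equivalence $\norm{\cdot}_{\dot G_c^{p+k-1}}\simeq\norm{\cdot}_{\dot H^{p+k-1}}$ on metric balls (effectively merging Lemma~\ref{lemma 1} into the induction); in either case this missing bookkeeping is the substance of the argument, not a routine verification. The extra intermediate claims $D_s^jc\in\dot H^{p+k-j}$, $j<k$, are not needed for the statement and, if proved by Gronwall, run into the same difficulty.
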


\begin{proof}
    Notice that the $k=1$ case for each function follows immediately from Lemma \ref{baseline}. 
    We proceed by induction on $k$. For $k\geq2$ recall the following formula from \cite[Lemma~3.3]{bruveris2015completeness}.
    \begin{equation}
        \begin{split}
        D_{c,h}\left( D_s^k c \right) &= D_s^k h - \ip{D_s^k h, D_s c}D_s c - k\ip{D_s h, D_s c}D_s^k c \\
        & \qquad - \ip{D_s h, D_s^k c}D_s c + \text{lower order terms,}
        \end{split}
    \end{equation}
    {where the lower order terms include only products of terms with less than $k$ derivative.}
    Applying the triangle inequality and ignoring the contributions of the lower order terms (one can readily show that these terms are bounded by $\norm{h}_{G_c^r}$ on $G_c^r$-metric balls up to constants depending only on $r, p, k, c_0$ and $\rho$) we have
    \begin{equation}\label{expansion 1}
        \begin{split}
        \norm{D_{c,h}\left( D_s^k c \right)}_{\dot{H}^p} &\lesssim \norm{D_s^k h}_{\dot{H}^p} + \norm{\ip{D_s^k h, D_s c}D_s c}_{\dot{H}^p} + k\norm{\ip{D_s h, D_s c}D_s^k c}_{\dot{H}^p} \\
        & \quad + \norm{\ip{D_s h, D_s^k c}D_s c}_{\dot{H}^p} +{\norm{h}_{G_c^r}}.
        \end{split}
    \end{equation}
    For the first term we apply Lemma \ref{mini lemma 1}, \eqref{invariant nesting} and Lemma \ref{length and sup locally bounded}
    \begin{align*}
        \norm{D_s^k h}_{\dot{H}^p} &\simeq \norm{D_s^k h}_{\dot{G}_c^p} = \norm{h}_{\dot{G}_c^{p+k}} \leq l_c^{r-p-k}\norm{h}_{\dot{G}_c^r} \lesssim \norm{h}_{\dot{G}_c^r} \leq \norm{h}_{G_c^r}.
    \end{align*}
    For the second term in \eqref{expansion 1} we first apply \eqref{product} with $f=\ip{D_s^k h, D_s c}$, $g=D_s c$, $a=p$ and $b=1$
    \begin{equation}\label{expansion 1.2}
    \begin{split}
        \norm{\ip{D_s^k h, D_s c}D_s c}_{\dot{H}^p} &\lesssim \abs{\widehat{\ip{D_s^k h, D_s c}}(0)} \norm{D_s c}_{\dot{H}^{p}} \\
        &\quad + \abs{\widehat{D_s c}(0)}\norm{\ip{D_s^k h, D_s c}}_{\dot{H}^{p}}\\
        &\quad +\norm{\ip{D_s^k h, D_s c}}_{\dot{H}^{p}}\norm{D_s c}_{\dot{H}^1}.
    \end{split}
    \end{equation}
    For the first term of \eqref{expansion 1.2} we apply the Cauchy-Schwartz inequality for the $L^2$-inner product
    \begin{align*}
        \abs{\widehat{\ip{D_s^k h, D_s c}}(0)} &= \abs{\int_{\cir}\ip{D_s^k h, D_s c} \, d\theta} 
        \lesssim \norm{D_s^k h}_{L^2} \norm{D_s c}_{L^2}.
    \end{align*}
    By Lemma \ref{mini lemma 1}, \eqref{invariant nesting} and Lemma \ref{length and sup locally bounded} we have
    \begin{align*}
        \norm{D_s^k h}_{L^2} &\simeq \norm{D_s^k h}_{G_c^0} = \norm{h}_{\dot{G}_c^k} \leq l_c^{r-k} \norm{h}_{\dot{G}_c^{r}} \lesssim \norm{h}_{\dot{G}_c^r} \leq \norm{h}_{G_c^r}.
    \end{align*}
    Recalling now that $D_s c$ is a unit vector we have $\norm{D_s c}_{L^2}=1$ and $\abs{\widehat{D_s c}(0)} \leq \norm{D_s c}_{L^\infty} = 1$.
    By Lemma \ref{baseline}, $\norm{D_s c}_{\dot{H}^1}$ is bounded on $G_c^r$-metric balls in $\mathcal{I}^r$ by constants depending only on $r, p, c_0$ and $\rho${, and thus, by \eqref{invariant nesting}, also $\norm{D_s c}_{\dot{H}^p}$}. Combining all this gives us $\abs{\widehat{\ip{D_s^k h, D_s c}}(0)} \lesssim \norm{h}_{G_c^r}$ and \eqref{expansion 1.2} becomes
    \begin{equation}\label{expansion 1.2.2}
        \norm{\ip{D_s^k h, D_s c}D_s c}_{\dot{H}^p} \lesssim \norm{\ip{D_s^k h, D_s c}}_{\dot{H}^{p}} + \norm{h}_{G_c^r}.
    \end{equation}
    For the first term in \eqref{expansion 1.2.2} we apply \eqref{product} with $f=D_s^k h, g= D_s c, a=p$ and $b=1$
    \begin{align*}
        \norm{\ip{D_s^k h, D_s c}}_{\dot{H}^{p}} &\lesssim \abs{\widehat{D_s^k h}(0)} \norm{D_s c}_{\dot{H}^{p}} + \abs{\widehat{D_s c}(0)}\norm{D_s^k h}_{\dot{H}^{p}}+\norm{D_s^k h}_{\dot{H}^{p}}\norm{D_s c}_{\dot{H}^1}.
    \end{align*}
    Similar to before we estimate using H{\"o}lder's inequality
    \begin{align*}
        \abs{\widehat{D_s^k h}(0)} = \abs{\int_{\cir} D_s^k h \ d\theta} \leq \norm{D_s^k h}_{L^1} \lesssim \norm{D_s^k h}_{L^2}.
    \end{align*}
    Recalling from above that $\norm{D_s^k h}_{L^2} \lesssim \norm{h}_{G_c^r}$ and $\norm{D_s^k h}_{\dot{H}^p} \lesssim \norm{h}_{G_c^r}$ and the boundedness of $\norm{D_s c}_{L^2}$, $\abs{\widehat{D_s c}(0)}$, $\norm{D_s c}_{\dot{H}^p}$ and $\norm{D_s c}_{\dot{H}^1}$ on $G_c^r$-metric balls in $\mathcal{I}^r$, \eqref{expansion 1.2.2} then becomes
    \begin{align*}
        \norm{\ip{D_s^k h, D_s c}D_s c}_{\dot{H}^p} \lesssim \norm{h}_{G_c^r}.
    \end{align*}
    The remaining terms in \eqref{expansion 1} are bounded in an almost identical fashion as
    \begin{align*}
        \norm{\ip{D_s h, D_s c}D_s^k c} \lesssim \norm{D_s^k c}_{\dot{H}^{p}}\norm{h}_{G_c^r}
    \end{align*}
    and
    \begin{align*}
        \norm{\ip{D_s h, D_s^k c}D_s c}_{\dot{H}^{p}} \lesssim \norm{D_s^k c}_{\dot{H}^{p}}\norm{h}_{G_c^r}.
    \end{align*}
    Hence $\norm{D_{c,h}\left( D_s^k c \right)}_{\dot{H}^{p}} \lesssim (1+ \norm{D_s^k c}_{\dot{H}^{p}})\norm{h}_{G_c^r}$ on $B_{G_c^r}(c_0,\rho)$ and the first result follows from Lemma \ref{gronwall lemma}.

    The boundedness of the second function on $G_c^r$-metric balls can be argued exactly as above using the formula from \cite[Lemma~3.3]{bruveris2015completeness}
    \begin{align}\label{expansion 2}
        D_{c,h}\left( D_s^{k-1} \abs{c_\theta} \right) &= \ip{D_s^{k} h, D_s c}\abs{c_\theta} - (k-2)\ip{D_s h, D_s c}D_s^{k-1} \abs{c_\theta} \\
        \nonumber &\qquad + \ip{D_s h, D_s^{k} c}\abs{c_\theta} + \text{lower order terms.}
    \end{align}
    
    Finally, for bounding the third function we use the boundedness of the second one, together with the expansion
    \begin{equation}\label{expansion 3}
        \partial_\theta^{{k-1}} \abs{c_\theta} = \sum_{j=1}^{k-2} \sum_{\alpha \in A_j} c_{j,\alpha} \prod_{i=0}^{k-2} \big(\partial_\theta^i \abs{c_\theta}\big)^{\alpha_i} D_s^{{k-1}} \abs{c_\theta},
    \end{equation}
    where $c_{j,\alpha}$ are constants and $\alpha = (\alpha_0,...,\alpha_{k-2})$ are multi-indices with index sets
    \begin{align*}
        A_j = \left\{ \alpha \ \left| \ \sum_{i=0}^{k-2} i \alpha_i = k-1-j \ \text{and} \ \sum_{i=0}^{k-2} \alpha_i = j\right.\right\}.
    \end{align*}
    Applying the triangle inequality and \eqref{product} to \eqref{expansion 3} and using an induction argument, we acquire $\norm{\abs{c_\theta}}_{\dot{H}^{p+k-1}} \lesssim 1$ on $B_{G_c^r}(c_0,\rho)$ for all $2 \leq k \leq n$. To establish the result for $c \mapsto \abs{c_\theta}^{-1}$, we simply apply the chain rule to express $\partial_\theta^{k-1} \abs{c_\theta}^{-1}$ as a linear combination of powers of $\abs{c_\theta}^{-1}$ and derivatives up to order ${k-1}$ of $\abs{c_\theta}$.
\end{proof}

Armed with the above, we are now ready to prove the central estimate.

\begin{proof}[Proof of Lemma ~\ref{lemma 1}]
    Firstly, note that $\norm{\cdot}_{G_c^r} \simeq \norm{\cdot}_{G_c^0} + \norm{\cdot}_{\dot{G}_c^r}$. The equivalence $\norm{\cdot}_{G_c^0} \simeq \norm{\cdot}_{L^2}$ on $G_c^r$-metric balls in $\mathcal{I}^r$ follows directly from Lemma \ref{mini lemma 1}. Hence, to establish the estimate, we need to show the equivalence of the homogeneous norms $\norm{\cdot}_{\dot{G}_c^r} \simeq \norm{\cdot}_{\dot{H}^r}$ on $G_c^r$-metric balls in $\mathcal{I}^r$.

    We begin with the case $\frac{3}{2}<r\leq 2$. As $\frac{1}{2}<{r-1}\leq 1$ we have, by Lemma \ref{mini lemma 1}, that $\norm{\cdot}_{\dot{G}_c^{{r-1}}} \simeq\norm{\cdot}_{\dot{H}^{{r-1}}}$ on $G_c^r$-metric balls in $\mathcal{I}^r$. From this we have
    \begin{align*}
        \norm{h}_{\dot{G}_c^{r}} &= \norm{D_s h}_{\dot{G}_c^{{r-1}}} \simeq\norm{D_s h}_{\dot{H}^{{r-1}}} = \norm{\abs{c_\theta}^{-1} h_\theta}_{\dot{H}^{{r-1}}}.
    \end{align*}
    Applying \eqref{product} with $f=\abs{c_\theta}^{-1}$, $g=h_\theta$ and $a=b=r-1>\frac{1}{2}$ we acquire
    \begin{align*}
        \norm{h}_{\dot{G}_c^{r}} &\simeq \norm{\abs{c_\theta}^{-1} h_\theta}_{\dot{H}^{{r-1}}} \\
        &\lesssim \abs{\widehat{\abs{c_\theta}^{-1}}(0)} \norm{h_\theta}_{\dot{H}^{{r-1}}} + \abs{\widehat{h_\theta}(0)}\norm{\abs{c_\theta}^{-1}}_{\dot{H}^{{r-1}}} + \norm{\abs{c_\theta}^{-1}}_{\dot{H}^{{r-1}}}\norm{h_\theta}_{\dot{H}^{{r-1}}}.
    \end{align*}
    Note now that $\abs{\widehat{\abs{c_\theta}^{-1}}(0)} \leq \norm{\abs{c_\theta}^{-1}}_{L^\infty}$ and $\widehat{h_\theta}(0) = 0$, which gives us
    \begin{align*}
        \norm{h}_{\dot{G}_c^{r}} &\lesssim \norm{\abs{c_\theta}^{-1}}_{L^\infty} \norm{h_\theta}_{\dot{H}^{{r-1}}} + \norm{\abs{c_\theta}^{-1}}_{\dot{H}^{{r-1}}}\norm{h_\theta}_{\dot{H}^{{r-1}}} \\
        &=\left(\norm{\abs{c_\theta}^{-1}}_{L^\infty} + \norm{\abs{c_\theta}^{-1}}_{\dot{H}^{{r-1}}}\right)\norm{h}_{\dot{H}^{r}}.
    \end{align*}
    By Lemmas \ref{length and sup locally bounded} and \ref{baseline}, we have that $\norm{\abs{c_\theta}^{-1}}_{L^\infty}$ and $\norm{\abs{c_\theta}^{-1}}_{\dot{H}^{{r-1}}}$ are bounded on $G_c^r$-metric balls $B_{G_c^r}(c_0,\rho)$ in $\mathcal{I}^r$ by constants depending only on $r, c_0$ and $\rho$. Hence we have
    \begin{align}\label{eq:4.2aux1}
        \norm{h}_{\dot{G}_c^{r}} &\lesssim \norm{h}_{\dot{H}^r}
    \end{align}
    on $G_c^r$-metric balls $B_{G_c^r}(c_0,\rho)$ in $\mathcal{I}^r$.

    For the other direction, note that
    \begin{align*}
        \norm{h}_{\dot{H}^r} &= \norm{h_\theta}_{\dot{H}^{r-1}} = \norm{\abs{c_\theta}D_s h}_{\dot{H}^{r-1}}.
    \end{align*}
    Mirroring the above, we apply \eqref{product} with $f=\abs{c_\theta}$, $g=D_s h$, $a=b=r-1>\frac{1}{2}$ and acquire
    \begin{align*}
        \norm{h}_{\dot{H}^r} &= \norm{\abs{c_{\theta}}D_s h}_{\dot{H}^{{r-1}}} \\
        &\lesssim \abs{\widehat{\abs{c_\theta}}(0)} \norm{D_s h}_{\dot{H}^{{r-1}}} + \abs{\widehat{D_s h}(0)}\norm{\abs{c_\theta}}_{\dot{H}^{{r-1}}} + \norm{\abs{c_\theta}}_{\dot{H}^{{r-1}}}\norm{D_s h}_{\dot{H}^{{r-1}}}.
    \end{align*}
    For the first term we again bound $\abs{\widehat{\abs{c_\theta}}(0)} \leq \norm{\abs{c_
    \theta}}_{L^\infty}$. For the term $\abs{\widehat{D_s h}(0)}$ we estimate
    \begin{align*}
        \abs{\widehat{D_s h}(0)} &= \abs{\int_{\cir} \abs{c_\theta}^{-1} h_\theta \ d\theta} \leq \norm{\abs{c_\theta}^{-1}}_{L^2} \norm{h}_{\dot{H}^1} \leq\norm{\abs{c_\theta}^{-1}}_{L^\infty} \norm{h}_{\dot{H}^1},
    \end{align*}
    where, in the first inequality, we have used Cauchy-Schwartz for the $L^2$ inner product. This gives us
    \begin{align*}
        \norm{h}_{\dot{H}^r} &\lesssim \norm{\abs{c_\theta}}_{L^\infty}\norm{D_s h}_{\dot{H}^{{r-1}}} + \norm{\abs{c_\theta}^{-1}}_{L^\infty}\norm{\abs{c_\theta}}_{\dot{H}^{{r-1}}}\norm{h}_{\dot{H}^1} + \norm{\abs{c_\theta}}_{\dot{H}^{{r-1}}}\norm{D_s h}_{\dot{H}^{{r-1}}}.
    \end{align*}
    Using Lemma \ref{mini lemma 1} we have $\norm{D_s h}_{\dot{H}^{{r-1}}} \simeq \norm{D_s h}_{\dot{G}_c^{r-1}} = \norm{h}_{\dot{G}_c^r}$ and $\norm{h}_{\dot{H}^1} \simeq \norm{h}_{\dot{G}_c^1} \leq l_c^{r-1} \norm{h}_{\dot{G}_c^{r}}$ where, in the final inequality, we have used \eqref{invariant nesting}. Hence we have
    \begin{align*}
        \norm{h}_{\dot{H}^r} &\lesssim \left(\norm{\abs{c_\theta}}_{L^\infty} + l_c^{r-1}\norm{\abs{c_\theta}^{-1}}_{L^\infty}\norm{\abs{c_\theta}}_{\dot{H}^{{r-1}}} + \norm{\abs{c_\theta}}_{\dot{H}^{{r-1}}}\right)\norm{h}_{\dot{G}_c^{r}}.
    \end{align*}
    Finally, by Lemmas \ref{length and sup locally bounded} and \ref{baseline}, $\norm{\abs{c_\theta}}_{L^\infty}, l_c$ and $\norm{\abs{c_\theta}}_{\dot{H}^{{r-1}}}$ are bounded on $G_c^r$-metric balls $B_{G_c^r}(c_0,\rho)$ in $\mathcal{I}^r$ by constants depending only on $r, c_0$ and $\rho$. Hence we have
    \begin{align}\label{eq:4.2aux2}
        \norm{h}_{\dot{H}^r} \lesssim \norm{h}_{\dot{G}_c^r}
    \end{align}
    on $G_c^r$-metric balls $B_{G_c^r}(c_0,\rho)$ in $\mathcal{I}^r$. This delivers the lemma for the cases $\frac{3}{2}<r\leq 2$.
    
    Next consider $2<r$ with decomposition $r=p + n$ for some $0 < p \leq 1$ and $n\geq 2$ an integer. As $0<p\leq 1$ we have, by Lemma \ref{mini lemma 1}, that $\norm{\cdot}_{\dot{G}_c^{p}} \simeq\norm{\cdot}_{\dot{H}^{p}}$ on $G_c^r$-metric balls in $\mathcal{I}^r$. From this we have
    \begin{align*}
        \norm{h}_{\dot{G}_c^{p+1}} &= \norm{D_s h}_{\dot{G}_c^{p}} \simeq\norm{D_s h}_{\dot{H}^{p}} = \norm{\abs{c_\theta}^{-1} h_\theta}_{\dot{H}^{p}}.
    \end{align*}
    Repeating the same argument as for \eqref{eq:4.2aux1} (with a slight change of using $a=p$ and $b=1$ instead of $a=b=r-1$), we obtain
    \begin{align*}
        \norm{h}_{\dot{G}_c^{p+1}} &\lesssim \norm{h}_{\dot{H}^{p+1}}
    \end{align*}
    on $G_c^r$-metric balls $B_{G_c^r}(c_0,\rho)$ in $\mathcal{I}^r$.

    For the other direction, note that
    \begin{align*}
        \norm{h}_{\dot{H}^{p+1}} &= \norm{h_\theta}_{\dot{H}^p} = \norm{\abs{c_\theta}D_s h}_{\dot{H}^p}.
    \end{align*}
    We now repeat the same argument as for \eqref{eq:4.2aux2} (again with $a=p$ and $b=1$ instead of $a=b=r-1$), and obtain
    \begin{align*}
        \norm{h}_{\dot{H}^{p+1}} \lesssim \norm{h}_{\dot{G}_c^{p+1}}
    \end{align*}
    on $G_c^r$-metric balls $B_{G_c^r}(c_0,\rho)$ in $\mathcal{I}^r$.

    We now establish an inductive step. 
    Assume that, for some $k$ with $2 \leq k \leq n$ we have $\norm{h}_{\dot{G}_c^{p+k-1}} \simeq \norm{h}_{\dot{H}^{p+k-1}}$ on $G_c^r$-metric balls in $\mathcal{I}^r$. From this we obtain
    \begin{align*}
        \norm{h}_{\dot{G}_c^{p+k}} &= \norm{D_s h}_{\dot{G}_c^{p+k-1}} \simeq\norm{D_s h}_{\dot{H}^{p+k-1}} = \norm{\abs{c_\theta}^{-1} h_\theta}_{\dot{H}^{p+k-1}},
    \end{align*}
    and 
    \begin{align*}
        \norm{h}_{\dot{H}^{p+k}} &= \norm{h_\theta}_{\dot{H}^{p+k-1}} = \norm{\abs{c_\theta}D_s h}_{\dot{H}^{p+k-1}}.
    \end{align*}
    By the same argument as above, with $a=b=p+k-1>1/2$, and using Lemma~\ref{workhorse} instead of Lemma~\ref{baseline} to bound $\norm{\abs{c_\theta}}_{\dot{H}^{p+k-1}}$ and $\norm{\abs{c_\theta}^{-1}}_{\dot{H}^{p+k-1}}$,  we obtain
    \begin{align*}
        \norm{h}_{\dot{G}_c^{p+k}} &\simeq \norm{h}_{\dot{H}^{p+k}}
    \end{align*}
    on $G_c^r$-metric balls $B_{G_c^r}(c_0,\rho)$ in $\mathcal{I}^r$. The result for $2<r$ now follows by induction on $k$.
\end{proof}

Using Lemma \ref{lemma 1} we can now relate the induced geodesic distance on $(\mathcal{I}^r, G_c^r)$ to the standard norm distance on the ambient linear space $(H^r, \norm{\cdot}_{H^r})$.

\begin{lemma}\label{lemma 2}
    Let $r>\frac{3}{2}$. Then, for every $G_c^r$-metric ball $B_{G_c^r}(c_0, \rho)$ and every $c_1, c_2 \in B_{G_c^r}(c_0,\rho)$, we have
    \begin{equation}\label{lower bound}
        \norm{c_2-c_1}_{H^r} \leq \alpha(r,c_0,4\rho) \dist_{G_c^r}(c_1,c_2),
    \end{equation}
    {where $\alpha$ is as in Lemma~\ref{lemma 1}.}
\end{lemma}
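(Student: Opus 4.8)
The plan is to deduce \eqref{lower bound} from the pointwise norm equivalence of Lemma~\ref{lemma 1} by integrating it along almost-length-minimizing paths between $c_1$ and $c_2$; the only point requiring care is to guarantee that such paths stay inside the enlarged ball $B_{G_c^r}(c_0,4\rho)$, on which Lemma~\ref{lemma 1} furnishes a single constant $\alpha(r,c_0,4\rho)$.

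First I would record the elementary consequences of the triangle inequality for $\dist_{G_c^r}$ (which is a genuine metric on $\mathcal{I}^r$ for $r>\tfrac32$ by Theorem~\ref{non-vanishing}): since $c_1,c_2\in B_{G_c^r}(c_0,\rho)$, concatenating a path $c_1\to c_0$ with a path $c_0\to c_2$ shows $\dist_{G_c^r}(c_1,c_2)<2\rho<\infty$. Given $\varepsilon>0$ with $\dist_{G_c^r}(c_1,c_2)+\varepsilon<3\rho$, pick a path $c\colon[0,1]\to\mathcal{I}^r$ from $c_1$ to $c_2$ of $G_c^r$-length $L<\dist_{G_c^r}(c_1,c_2)+\varepsilon<3\rho$. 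Then for every $t$ we have $\dist_{G_c^r}(c_0,c(t))\le\dist_{G_c^r}(c_0,c_1)+\dist_{G_c^r}(c_1,c(t))<\rho+L<4\rho$, so $c(t)\in B_{G_c^r}(c_0,4\rho)$ for all $t$.

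Next, writing $\alpha=\alpha(r,c_0,4\rho)$, I would apply Lemma~\ref{lemma 1} at each $c(t)$ to the velocity $\partial_t c(t)\in T_{c(t)}\mathcal{I}^r$, which gives $\norm{\partial_t c(t)}_{H^r}\le\alpha\,\norm{\partial_t c}_{G_c^r}$, and then integrate, using that in the Banach space $(H^r,\norm{\cdot}_{H^r})$ one has $\norm{c_2-c_1}_{H^r}=\bignorm{\int_0^1\partial_t c\,dt}_{H^r}\le\int_0^1\norm{\partial_t c}_{H^r}\,dt$. This yields
\[
\norm{c_2-c_1}_{H^r}\le\alpha\int_0^1\norm{\partial_t c}_{G_c^r}\,dt=\alpha L<\alpha\bigl(\dist_{G_c^r}(c_1,c_2)+\varepsilon\bigr),
\]
and letting $\varepsilon\downarrow 0$ gives \eqref{lower bound}.

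The main (and essentially only) obstacle is the containment step: Lemma~\ref{lemma 1} only provides a uniform equivalence constant on a fixed metric ball, so one must restrict attention to near-minimizing paths that provably never leave it — which forces the passage from radius $\rho$ (where $c_1,c_2$ live) to radius $4\rho$, and hence the constant $\alpha(r,c_0,4\rho)$ appearing in the statement. Everything else is soft: the triangle inequality, the definition of the geodesic distance as an infimum of lengths of paths, and the triangle inequality for the (Bochner) integral.
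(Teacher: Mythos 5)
Your proof is correct and follows essentially the same route as the paper: choose a path of length within $\varepsilon$ of the geodesic distance, use the triangle inequality to confine it to $B_{G_c^r}(c_0,4\rho)$, apply the norm equivalence of Lemma~\ref{lemma 1} pointwise along the path, integrate, and take the infimum (equivalently, let $\varepsilon\downarrow 0$). The only cosmetic difference is that you bound $\norm{c_2-c_1}_{H^r}$ via the Bochner-integral triangle inequality where the paper phrases the same step as comparing with the $H^r$-length of the path.
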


\begin{proof}
    Let $c_1, c_2 \in B_{G_c^r}(c_0,\rho)$ and $\sigma : [0,1] \rightarrow \mathcal{I}^r$ a piecewise smooth curve connecting them with $G_c^r$-length $L_{G_c^r}(\sigma) < \dist_{G_c^r}(c_1,c_2) + \varepsilon$. Then, for $\varepsilon < \rho$, we have:
    \begin{equation*}
        L_{G_c^r}(\sigma) < \dist_{G_c^r}(c_1,c_2) + \varepsilon < 3\rho.
    \end{equation*}
    Hence, as
    \begin{equation*}
        \dist_{G_c^r}(\sigma(t), c_0) \leq \dist_{G_c^r}(\sigma(t), c_1) + \dist_{G_c^r}(c_1, c_0) \leq L_{G_c^r}(\sigma) + \rho < 3\rho + \varepsilon < 4\rho ,
    \end{equation*} 
    we have that $\sigma([0,1]) \subset B_{G_c^r}(c_0,4\rho)$. Finally, applying Lemma \ref{lemma 1}, we have that
    \begin{align*}
        \norm{c_1-c_2}_{H^r} \leq L_{H^r}(\sigma) = \int_0^1 \norm{\dot{\sigma}}_{H^r} \ dt \leq \alpha(r,c_0,4\rho) \int_0^1 \norm{\dot{\sigma}}_{G_c^r} \ dt = \alpha L_{G_c^r}(\sigma).
    \end{align*}
    Taking an infimum over all such $\sigma$ yields \eqref{lower bound}.
\end{proof}

We are now ready to present the proof of the main theorem.
\begin{proof}[Proof of Theorem~\ref{main theorem}]
    (1) Let $\{c_n\}\subset \mathcal{I}^r$ be a $G_c^r$-Cauchy sequence. Then there exists $\rho>0$ such that $\{c_n\}$ is contained in some $B_{G_c^r}(c_0,\rho)$. Hence, by Lemma \ref{lemma 2} there exists $\alpha >0$ such that 
    \begin{equation}
        \norm{c_N - c_M}_{H^r} \leq \alpha \dist_{G_c^r}(c_N,c_M),
    \end{equation}
    for all $N, M \in \mathbb{N}$. So $\{c_n\}$ is Cauchy in $(H^r, \norm{\cdot}_{H^r})$ and converges to some $c_\infty \in H^r$. From Lemma \ref{gronwall lemma} we have that $\{\abs{(c_n)_\theta}^{-1}\}$ is bounded away from $0$. As $r>\frac{3}{2}$, $H^r$ convergence implies $C^1$ convergence and hence $c_\infty \in \mathcal{I}^r$. Finally, as $G_c^r$ is a strong metric, {$\dist_{G_c^r}$ induces the same topology as the manifold topology \cite{lang1999}, which is in our case the Hilbert space topology of $H^r$. Thus,} $\norm{c_n - c_\infty}_{H^r} \rightarrow 0$ implies that $\dist_{G_c^r}(c_n, c_\infty) \rightarrow 0$.

 (2) Proving geodesic convexity follows exactly as in the integer-order results, using the direct methods in the calculus of variations and utilizing the estimates of Lemma~\ref{workhorse}. See \cite[Section 5]{bruveris2015completeness} or \cite[Section 5.5]{bauer2020sobolev}.
    
    (3) Next, for geodesic completeness, note that although the Hopf-Rinow theorem does not hold in infinite dimensions \cite{grossman1965, mcalpin1965, atkin1975} one still has that metric completeness implies geodesic completeness for strong metrics \cite{lang1999}. Hence we immediately have that, for $q>\frac{3}{2}$, $(\mathcal{I}^q, G^q)$ is geodesically complete. 

 To extend geodesic completeness to the case $r>q$ we will apply a no-loss-no-gain argument in spatial regularity, as originally developed by Ebin and Marsden to prove local well-posedness of the incompressible Euler equation~\cite{ebin1970}. 
 In the context of the situation of the present article, the same argument has already been used in~\cite{bauer2018fractional} to prove the local well-posedness for the $G^q$-metric on the whole scale of Sobolev immersions $\mathcal{I}^r(\cir,\mathbb R^d)$ with $r>\frac32$ and $r-q\geq 0$. 
 The exact same argument yields the desired global existence for all initial conditions if $q>\frac32$. 
 Since the metric $G^q$ is invariant by reparametrization, the same also holds for the corresponding geodesic spray. This allows one to apply the no loss-no gain result as formulated in~\cite{bruveris2017regularity}. 
 Using this  we obtain that solving the geodesic equation for initial conditions in $\mathcal{I}^r(\cir,\mathbb R^d)$ with $r\geq q$ the corresponding solution (geodesic) in $\mathcal{I}^r(\cir,\mathbb R^d)$ exists for the same maximal time interval as in $\mathcal{I}^q(\cir,\mathbb R^d)$. As $\left(\mathcal{I}^q(\cir,\mathbb R^d), G^q\right)$ is geodesically complete by the results of the previous section, this implies that all geodesics exist for all time in $\mathcal{I}^q(\cir,\mathbb R^d)$ and thus also in $\mathcal{I}^r(\cir,\mathbb R^d)$. This concludes the proof of geodesic completeness for $\mathcal{I}^r(\cir,\mathbb R^d)$ with $r>q$ and consequently also for $r=\infty$, i.e., for the Fr\'echet manifold $\Imm(\cir,\mathbb R^d)$. 

(4) Let $\eta_0 \in C_c^\infty(\R)$ be a standard mollifier, and let $\eta(t,x) = \frac{1}{t} \eta_0(x/t)$. 
Fix $r'\in (3/2,r)$ with $r'\ge q$. 
Let $c_0\in \mathcal{I}^{r'}(\cir,\mathbb R^d)\setminus \mathcal{I}^r(\cir,\mathbb R^d)$, and define $c:[0,\epsilon]\to \mathcal{I}^{r'}(\cir,\mathbb R^d)$ by 
\[
c(t) = 
\begin{cases}
    c_0 & t=0 \\
    c_0 * \eta(t,\cdot) & t>0,
\end{cases}
\]
{where $*$ denotes the convolution operator.}

By standard theory of mollifiers, $c(t) \to c(0)$ as $t\to 0$ in $H^{r'}(\cir,\R^d)$; since $r'>3/2$, it is true also in $C^1(\cir,\R^d)$.
Therefore, for $\epsilon>0$ small enough, it follows that indeed $c(t)$ is an immersion for all $t\in [0,\epsilon]$.
In particular, $c(t)\in \Imm(\cir,\R^d)$ for any $t\in(0,\epsilon]$.
Now, since $c$ is a smooth curve on the complete metric space $(\mathcal{I}^{r'}(\cir,\mathbb \R^d),\dist_{G^{r'}})$, it has a finite $G^{r'}$-length, and thus also a finite $G^q$-length (e.g., using \eqref{invariant nesting} and the fact that length is uniformly bounded along the curve).
Now, consider the path $c|_{t\in (0,1]}$ in $(\mathcal{I}^r(\cir,\mathbb \R^d),\dist_{G^q})$.
By what we proved, it is a finite length path that leaves the space (at $t=0$), since $c_0\notin \mathcal{I}^r(\cir,\mathbb \R^d)$. 
Thus the space is incomplete, as long it was a metric space to begin with (i.e., if $q>1/2$).
This completes   the metric incompleteness proof.
Now, if $q>3/2$, we can repeat the same argument for $r'=q$;
this shows that $\mathcal{I}^r(\cir,\mathbb \R^d)$ is dense in $\mathcal{I}^q(\cir,\mathbb \R^d)$ with respect to $\dist_{G^q}$.
Since $(\mathcal{I}^q(\cir,\mathbb \R^d),\dist_{G^q})$ is complete, we obtain it is the completion of $\mathcal{I}^r(\cir,\mathbb \R^d)$.

(5) It remains to prove the statement on geodesic incompleteness for $q<\frac32$ and $r>\frac32$. Therefore we will follow a similar argument as in~\cite{bauer2012sobolev}, where geodesic incompleteness for integer order metrics on the space of immersions has been studied. Namely, we consider the space $\mathcal C$
of all concentric circles as a subset of $\mathcal{I}^r(\cir,\mathbb R^d)$, i.e.,
\begin{equation}
\mathcal C:=\left\{(r\operatorname{cos}(\theta),r\operatorname{sin}(\theta)): r\in\mathbb R_{>0}\right\}\subset \mathcal{I}^r(\cir,\mathbb R^d).
\end{equation}
A straight forward calculation shows that the space $\mathcal C$ equipped with the restriction of the $G^q$-metric is in fact a totally geodesic subset of $\mathcal{I}^r(\cir,\mathbb R^d)$.
Consequently, if we can show that $\mathcal C$ is geodesically incomplete for $q<\frac32$ this also implies that $\mathcal{I}^r(\cir,\mathbb R^d)$ is geodesically incomplete. 
Furthermore, since $\mathcal C$ is finite dimensional, by the theorem of Hopf-Rinow this can be reduced to proving metric incompleteness. This allows us to conclude the proof by showing that one can scale down a circle to zero with finite $G^q$-length. 
To this end, let $c:[0,1)\to \Imm(\cir,\R^2)$, $c(t)=(1-t)(\operatorname{cos}(\theta),\operatorname{sin}(\theta))$. If $q<\frac{3}{2}$, a straightforward calculation yields the following inequality
\begin{align}
\int_0^1 G_c^q(\partial_t c, \partial_t c)^{1/2} dt \lesssim \frac{1}{\tfrac{3}{2}-q},
\end{align}
from which it follows that length of $c$ is finite.
Hence we have constructed a path of finite length that leaves the space $\mathcal C$. This yields the desired metric and geodesic incompleteness result of $\mathcal C$ and consequently geodesic incompleteness of $\mathcal{I}^r(\cir,\mathbb R^d)$. 
\end{proof}

\appendix
\section{Products and compositions in fractional Sobolev spaces}\label{appendix}
Here we provide details for the proof of the estimates given in Lemma \ref{lemma:com-pro}. As mentioned in Section \ref{sec:prelim}, our approach closely follows that of \cite{escherkolev2014}.
\begin{proof}[Proof of Lemma~\ref{lemma:com-pro}] \ \\
    \begin{enumerate}[(i)]
        \item The inequality \eqref{non-invariant nesting} is immediate from the definition \eqref{hom_sobolev norm}. \\
        \item The proof of the inequality \eqref{full norm product} can be found in \cite[Lemma~B.1]{escherkolev2014}. \\
        \item The proof of the \eqref{product} is essentially identical to the proof of \eqref{full norm product}. However, we record it here for completeness. We will deal explicitly with the case $d=1$; the extension to $d>1$ is straightforward.\\

        For $0< a \leq b$, note that from \eqref{hom_sobolev norm} we have
        \begin{align*}
            \norm{fg}_{\dot{H}^a}^2 &= \sum_{n \in \Z \setminus \{0\}} \abs{n}^{2a} \sabs{\widehat{fg}(n)}^2,
        \end{align*}
        where
        \begin{align*}
            \widehat{fg}(n) = \hat{f} * \hat{g} (n) = \sum_{j+k=n}\hat{f}(j)\hat{g}(k).
        \end{align*}
        This gives the inequality
        \begin{align*}
            \abs{n}^{a}\sabs{\widehat{fg}(n)} &\leq \sum_{j+k=n} \abs{j+k}^{a} \sabs{\hat{f}(j)}\abs{\hat{g}(k)} \\
            &\leq \sum_{\substack{j+k=n \\ \abs{j} \leq \abs{k}}} \abs{j+k}^{a} \sabs{\hat{f}(j)}\abs{\hat{g}(k)} + \sum_{\substack{j+k=n \\ \abs{j} > \abs{k}}} \abs{j+k}^{a} \sabs{\hat{f}(j)}\abs{\hat{g}(k)},
            \end{align*}
            which, up to a multiplication by $2^a$, gives us
            \begin{align*}
            \abs{n}^{a}\sabs{\widehat{fg}(n)} &\lesssim \sum_{\substack{j+k=n \\ \abs{j} \leq \abs{k}}} \abs{k}^{a} \sabs{\hat{f}(j)}\abs{\hat{g}(k)} + \sum_{\substack{j+k=n \\ \abs{k} < \abs{j}}} \abs{j}^{a} \sabs{\hat{f}(j)}\abs{\hat{g}(k)}.
        \end{align*}
        Separating the terms with $j, k =0$ we acquire
        \begin{equation}\label{lemma 2.2 expansion 1}
            \begin{split}
            \abs{n}^{a}\sabs{\widehat{fg}(n)} &\leq \sabs{\hat{f}(0)} \abs{n}^{a} \abs{\hat{g}(n)} + \abs{\hat{g}(0)} \abs{n}^{a} \sabs{\hat{f}(n)} \\
            &\qquad + \sum_{\substack{j+k=n \\ 0<\abs{j} \leq \abs{k}}} \abs{k}^{a} \sabs{\hat{f}(j)}\abs{\hat{g}(k)} + \sum_{\substack{j+k=n \\ 0 < \abs{k} < \abs{j}}} \abs{j}^{a} \sabs{\hat{f}(j)}\abs{\hat{g}(k)}.
            \end{split}
        \end{equation}
        Focusing on the third term above, we have
        \begin{align*}
            \sum_{\substack{j+k=n \\ 0<\abs{j} \leq \abs{k}}} \abs{k}^{a} \sabs{\hat{f}(j)}\abs{\hat{g}(k)} &\leq \sum_{\substack{j+k=n \\ 0<\abs{j} \leq \abs{k}}} \abs{\frac{k}{j}}^{b-a} \abs{k}^{a} \sabs{\hat{f}(j)}\abs{\hat{g}(k)} \\
            &\leq \sum_{\substack{j+k=n \\ 0<\abs{j} \leq \abs{k}}} \frac{1}{\abs{j}^b} \abs{j}^a \sabs{\hat{f}(j)}  \abs{k}^{b} \abs{\hat{g}(k)} \\
            &\leq \sum_{\substack{j+k=n \\ j \neq 0}} \frac{1}{\abs{j}^b} \abs{j}^a \sabs{\hat{f}(j)}  \abs{k}^{b} \abs{\hat{g}(k)} \\
            &= ( \lambda_b \tilde{f}_a * \tilde{g}_b ) (n),
        \end{align*}
        where, for $n \in \Z \setminus \{0\}$, we define $\lambda_b(n) = \frac{1}{\abs{n}^{b}}$, $\tilde{f}_a(n)=\abs{n}^a \sabs{\hat{f}(n)}$ and $\tilde{g}_b(n)=\abs{n}^b \abs{\hat{g}(n)}$ along with $\lambda_b(0) = \tilde{f}_a(0)=\tilde{g}_b(0)=0$. We note here that, for $a>0$, we have the equality $\norm{f}_{\dot{H}^a} = \snorm{\tilde{f}_a}_{\ell^2}$.

        Next, for the fourth term of \eqref{lemma 2.2 expansion 1}, we have
        \begin{align*}
            \sum_{\substack{j+k=n \\ 0 < \abs{k} < \abs{j}}} \abs{j}^{a} \sabs{\hat{f}(j)}\abs{\hat{g}(k)} &= \sum_{\substack{j+k=n \\ 0 < \abs{k} < \abs{j}}} \abs{\frac{k}{k}}^{b} \abs{j}^{a} \sabs{\hat{f}(j)}\abs{\hat{g}(k)} \\
            &= \sum_{\substack{j+k=n \\ 0 < \abs{k} < \abs{j}}} \abs{j}^{a} \sabs{\hat{f}(j)} \frac{1}{\abs{k}^b} \abs{k}^b \abs{\hat{g}(k)} \\
            &\leq \sum_{\substack{j+k=n \\ k\neq 0}} \abs{j}^{a} \sabs{\hat{f}(j)} \frac{1}{\abs{k}^b} \abs{k}^b \abs{\hat{g}(k)} \\
            &= \left(\tilde{f}_a * \lambda_b \tilde{g}_b \right) (n).
        \end{align*}
        Combining all this, \eqref{lemma 2.2 expansion 1} becomes
        \begin{equation}
            \abs{n}^{a}\sabs{\widehat{fg}(n)} \leq \sabs{\hat{f}(0)} \tilde{g}_a(n) + \abs{\hat{g}(0)} \tilde{f}_a(n) + \left(\lambda_b \tilde{f}_a * \tilde{g}_b \right) (n) + \left( \tilde{f}_a * \lambda_b \tilde{g}_b \right) (n),
        \end{equation}
        which gives
        \begin{equation}\label{lemma 2.2 expansion 2}
                \norm{fg}_{\dot{H}^a} \leq \sabs{\hat{f}(0)} \norm{\tilde{g}_a}_{\ell^2} + \abs{\hat{g}(0)} \snorm{\tilde{f}_a}_{\ell^2} + \snorm{\lambda_b \tilde{f}_a * \tilde{g}_b}_{\ell^2} + \snorm{\tilde{f}_a * \lambda_b \tilde{g}_b}_{\ell^2}.
        \end{equation}
        By Young's inequality and the Cauchy-Schwartz inequality for the $\ell^2$-inner product we have
        \begin{align*}
            \snorm{\lambda_b \tilde{f}_a * \tilde{g}_b}_{\ell^2} &\lesssim \snorm{\lambda_b \tilde{f}_a}_{\ell^1} \snorm{\tilde{g}_b}_{\ell^2} \lesssim \snorm{\lambda_b}_{\ell^2}\snorm{\tilde{f}_a}_{\ell^2} \snorm{\tilde{g}_b}_{\ell^2} \lesssim \snorm{f}_{\dot{H}^a}\snorm{g}_{\dot{H}^b}
        \end{align*}
        and
        \begin{align*}
            \snorm{\tilde{f}_a * \lambda_b \tilde{g}_b}_{\ell^2} &\lesssim \snorm{\tilde{f}_a}_{\ell^2} \snorm{\lambda_b \tilde{g}_b}_{\ell^1} \lesssim \snorm{\tilde{f}_a}_{\ell^2}\snorm{\lambda_b}_{\ell^2} \snorm{\tilde{g}_b}_{\ell^2} \lesssim \snorm{f}_{\dot{H}^a}\snorm{g}_{\dot{H}^b}.
        \end{align*}
        Hence, from \eqref{lemma 2.2 expansion 2} we have
        \begin{align}
            \norm{f g}_{\dot{H}^a} \lesssim_{(a,b)} |\hat{f}(0)| \norm{g}_{\dot{H}^a} + \abs{\hat{g}(0)}\norm{f}_{\dot{H}^a}+\norm{f}_{\dot{H}^a}\norm{g}_{\dot{H}^b}.
        \end{align} \\
        
        \item Recalling the Gagliardo semi-norm, cf. \cite{dinezza2012hitchhiker}, we have
        \begin{equation}\label{gagliardo}
            \norm{fg}_{\dot{H}^a} \simeq \int_{\cir}\int_{\cir} \frac{\abs{f(\theta)g(\theta)-f(\alpha)g(\alpha)}}{\abs{\theta-\alpha}^{1+2a}} \ d\theta d\alpha.
        \end{equation}
        Applying the triangle inequality, we have
        \begin{align*}
            \abs{f(\theta)g(\theta)-f(\alpha)g(\alpha)} &= \abs{f(\theta)g(\theta) - f(\theta)g(\alpha) + f(\theta)g(\alpha)-f(\alpha)g(\alpha)} \\
            &\leq \abs{f(\theta)g(\theta) - f(\theta)g(\alpha)} + \abs{f(\theta)g(\alpha)-f(\alpha)g(\alpha)} \\
            &\leq \norm{f}_{L^\infty} \abs{g(\theta)-g(\alpha)} + \norm{g}_{L^\infty} \abs{f(\theta)-f(\alpha)}.
        \end{align*}
        Substituting this into \eqref{gagliardo} immediately yields \eqref{unit vector product}. \\
        
        \item Applying a change of variables immediately gives
        \begin{align*}
            \norm{f \circ \phi}_{L^2} &\leq \norm{(\phi^{-1})_\theta}^{\frac{1}{2}} \norm{f}_{L^2}
        \end{align*}
        and
        \begin{align*}
            \norm{f \circ \phi}_{\dot{H}^1} & \leq \norm{\phi_\theta}^{\frac{1}{2}} \norm{f}_{\dot{H}^1}.
        \end{align*}
        The inequality \eqref{composition} then follows by interpolation, cf. \cite[Corollary 8.3]{frazier1990a} or \cite{triebel1983theory}.
    \end{enumerate}
\end{proof}

\bibliographystyle{abbrv}
\bibliography{bibliography}

\vfill

\end{document}